\newtheorem{theorem}{Theorem}[section]
\newtheorem{proposition}[theorem]{Proposition}
\newtheorem{remark}[theorem]{Remark}
\newtheorem{lemma}[theorem]{Lemma}
\def\P{{\mathcal P}}
\def\F{{\mathcal F}}
\def\O{{\Omega}}
\def\NN{{\mathbb N}}
\def\ZZ{{\mathbb Z}}
\def\RR{{\mathbb R}}
\def\PP{{\mathbb P}}
\def\EE{{\mathbb E}}
\def\1{{ 1 }}
\newcommand{\set}[1]{\left\{ #1\right\}}
\newcommand{\bra}[1]{\left( #1\right)}
\renewcommand{\phi}{\varphi}
\newcommand \dps{\displaystyle }
\renewcommand{\phi}{\varphi}
\newcommand{\Id}{\text{Id}}
\title{Effective dynamics for a kinetic Monte-Carlo model with slow and
  fast time scales}
\author{Salma Lahbabi$^{1,2,3}$ and Frédéric Legoll$^{4,2}$\\
{\footnotesize $^1$ CERMICS, \'Ecole des Ponts ParisTech, Universit\'e Paris-Est,}\\
{\footnotesize 6 et 8 avenue Blaise Pascal, 77455 Marne-La-Vall\'ee
Cedex 2, France}
\\
{\footnotesize \tt lahbabis@cermics.enpc.fr}
\\
{\footnotesize $^2$ INRIA Rocquencourt, MICMAC team-project,}\\
 {\footnotesize Domaine de Voluceau, B.P. 105,
 78153 Le Chesnay Cedex, France}
\\
{\footnotesize $^3$ CNRS \& Laboratoire de Mathématiques (UMR 8088), Université de Cergy-Pontoise,}\\
{\footnotesize 95000 Cergy-Pontoise Cedex, France}
\\
{\footnotesize $^4$ Laboratoire Navier, \'Ecole des Ponts ParisTech, Universit\'e Paris-Est,}\\
{\footnotesize 6 et 8 avenue Blaise Pascal, 77455 Marne-La-Vall\'ee
Cedex 2, France}\\
{\footnotesize \tt legoll@lami.enpc.fr}
}
\date{\today} 
\begin{document}

\maketitle

\begin{abstract}
We consider several multiscale-in-time kinetic Monte Carlo models, in which 
some variables evolve on a fast time scale, while the others
evolve on a slow time scale. In the first two models we consider,
a particle evolves in a one-dimensional potential energy landscape which
has some 
small and some large barriers, the latter dividing the state space into
metastable regions. In the limit of infinitely large barriers, we
identify the effective dynamics 
between these macro-states, and prove the convergence of the
process towards a kinetic Monte Carlo model. We next
consider a third model, which consists of a system of two particles. The
state of each particle 
evolves on a fast time-scale while conserving their respective
energy. In addition, the particles can exchange
energy on a slow time scale. Considering the energy of the first
particle, we identify its effective
dynamics in the limit of asymptotically small ratio between
the characteristic times of the fast and the slow dynamics. For all
models, our 
results are illustrated by representative numerical simulations. 
\end{abstract}



\section{Introduction}



Langevin dynamics is commonly used in computational
statistical physics to model the evolution of atomistic systems at finite
temperature. The state of the system evolves according to a stochastic
differential equation, and is thus modelled as a real vector valued Markov
process. Generically, the state space of such atomistic systems can be decomposed into several
metastable regions, separated by high energy barriers. It is therefore
natural to introduce kinetic Monte-Carlo models as a simplification of the
continuous-in-space reference model, where the state space is
coarse-grained into discrete states that each corresponds to a
metastable region of the continuous model. We refer
e.g. to~\cite{par_rep} for a formalization of this idea. The resulting
dynamics is a time continuous Markov chain, also called jump process.

In this work, we consider such a jump process, with the particularity
that two different time scales are present in the system. 
On a typical trajectory, many jumps of the fast degrees of freedom occur
before a significant evolution of the slowly varying variables is
observed. Therefore, a direct discretization is numerically very
costly (this problem is known as the small barrier problem).  
The aim of this work is to find an effective
dynamics for the slow variables (which turns out to be again a
kinetic Monte Carlo model) while filtering out the fast variables. This
effective dynamics is derived in the regime of large time scale
separation between the slow and the fast variables. 

\medskip

We will successively perform this derivation for three different models.

\medskip

First, in Section~\ref{modele_deux_etats}, we consider a particle
subjected to a potential energy presenting two macro-states separated by
a high energy barrier. Inside each macro-state, there are finitely many
micro-states separated by relatively low energy barriers (see
Fig.~\ref{fig:puits}). The ratio between the low energy barriers and
the large energy barriers is characterized by a parameter $\epsilon$
that we will take asymptotically small. This ratio encodes the
difference of time scales between the dynamics within a macro-state
(only low energy barriers have to be overcome, and the dynamics is
therefore fast), and the global dynamics (for which large energy
barriers have to be overcome, making this dynamics slow). See
Section~\ref{section_1_1} for a complete description of the model.

We are interested in the long time behavior of
functions of the slow variables. We consider in this article the
simplest case of such function, that is, the macro-state in which the
particle is 
located. At the price of additional technicalities, our approach carries
over to more general functions of the slow variables. 

Under an irreducibility assumption on the dynamics within the 
macro-states, we prove that, in the limit of asymptotically large time
scale separation (namely when $\epsilon$ goes to zero), the dynamics of
the slow variable converges to a jump process over the two
macro-states. The transition rates of this limiting process are, in some
sense, the weighted averages of the transition rates of the reference
model. We underline that our convergence is a convergence on the {\em
  path} of the system, and not only on the state of the system at any
given time. Our main result, Theorem~\ref{th1}, is presented in
Section~\ref{section_1_1} and proved in Section~\ref{section_1_2}. 

In Section~\ref{1}, we present detailed numerical results illustrating
our theoretical conclusions. In particular, we monitor the probability
distribution of the first waiting time in a macro-state, and check that
this distribution indeed converges to the asymptotic distribution. 

\medskip

In Section~\ref{infinite_detats}, we turn to our second model, which is
a generalization of the model considered in
Section~\ref{modele_deux_etats} where the potential energy presents {\em
  infinitely many} macro-states instead of two. To simplify the problem,
we assume that the internal dynamics within each macro-state are
identical (see Section~\ref{sym} for a detailed presentation of the
model). In this case, the effective dynamics is a time continuous random
walk with Poissonian waiting times, as stated in our main
result of that Section, Theorem~\ref{th2}. We provide some
representative numerical 
results in Section~\ref{2}. 

\medskip

We finally turn in Section~\ref{sec_ene} to our third model, which is
different in spirit from the models studied in
Sections~\ref{modele_deux_etats} and~\ref{infinite_detats}. One interest
of this last section is to show that the arguments employed to analyze
the first two models can be used to study a model different in
nature. The system at hand in Section~\ref{sec_ene} contains two
particles, each one being described by $k$ spin-like variables. The system
evolves either due to the internal evolution of each particle (which
occurs on a fast time-scale), or due to the interaction between the two
particles (which occurs on a slow time-scale). In the first case, the
energy of each particle is preserved 
while in the second, there is an exchange of energy between the two
particles. Note that the total energy of the system is preserved in both
cases. Our
quantity of interest is the energy of the first 
particle, which is indeed a slow observable (see Section~\ref{pres_ene}
for a complete description of the model). We show that the dynamics
of the first particle energy converges to a jump process
on the (finite) set of admissible energies, this set being determined by the
initial energy (see Section~\ref{section_3_2}, Theorem~\ref{energie}, for
our main result). We collect in Section~\ref{sec_sim_ene} some
numerical illustrations. 

\bigskip

The difficulty of the question we address stems from the fact
that the slow observable is not a Markov process: this is a closure
problem. A typical tool in this context is the Mori-Zwanzig
projection formalism, which is described in details
in~\cite{gks}. This leads to approximating the slow observable by a
process which has some memory in time. In our work, we assume that a
time-scale separation is present in the system. Memory effects may then
be neglected, and the slow observables be approximated by a Markov
process. As often the case in such settings, an essential ingredient of
our proof is an averaging principle (see~\cite{pav_stu} for a
comprehensive review of that principle in various contexts). 
We refer to~\cite{friesecke,novotny,schuette_jcp,schuette_handbook} for
related works in the framework of discrete time Markov chains in a
discrete state space. 

As pointed out above, kinetic Monte Carlo models are somewhat obtained
as a coarse-grained approximation of real valued Markov processes, such
as the Langevin equation (or its overdamped limit). In that framework,
the construction and the analysis of effective dynamics has been
undertaken in several works, see e.g.~\cite{eff_dyn,eff_dyn_lncse} and
the comprehensive bibliography contained therein.

\medskip

Throughout this paper, we use several well-known results that we recall
in Appendix~\ref{sec:app} below. 

\section{A particle in a potential energy landscape with two macro-states}
\label{modele_deux_etats}

In this section we study the dynamics of a particle in a potential energy
with two macro-states (see Fig.~\ref{fig:puits}). The state of the
particle is represented by a macroscopic variable (the index of the
macro-state), which can take here only two values, and a microscopic
variable (the index of the micro-state within the macro-state). We are
concerned with the long time behaviour of the macroscopic variable. In
Section~\ref{section_1_1}, we present the model and state our convergence
result (Theorem~\ref{th1}), the proof of which is given in
Section~\ref{section_1_2}. Numerical results illustrating our theoretical
conclusions are gathered in Section~\ref{1}.

\setlength{\unitlength}{0.25cm}
\linethickness{0.1mm}
\begin{figure}[h]
\begin{center}
\begin{picture}(30,10)
\qbezier(5,9.5)(6,10)(7,0)
\qbezier(7,0)(7.5,-1)(8,0)
\qbezier(8,0)(8.5,1)(9,0)
\qbezier(9,0)(9.5,-1)(10,0)
\qbezier(10,0)(10.5,1)(11,0)
\qbezier(11,0)(11.5,-1)(12,0)
\qbezier(12,0)(12.5,1)(13,0)
\qbezier(13,0)(13.5,-1)(14,0)
\qbezier(14,0)(15,20)(16,0)

\qbezier(16,0)(16.5,-1)(17,0)
\qbezier(17,0)(17.5,1)(18,0)
\qbezier(18,0)(18.5,-1)(19,0)
\qbezier(19,0)(19.5,1)(20,0)
\qbezier(20,0)(20.5,-1)(21,0)
\qbezier(21,0)(21.5,1)(22,0)
\qbezier(22,0)(22.5,-1)(23,0)
\qbezier(23,0)(24,10)(25,9.5)
\end{picture}
\end{center}
\caption{Example of a potential energy with two macro-states of energy wells.
\label{fig:puits}}
\end{figure}
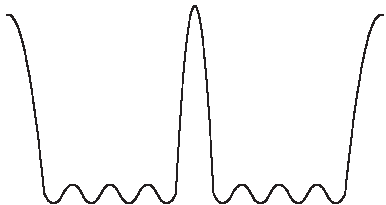

\subsection{Presentation of the model and main result}
\label{section_1_1}

We now formalize the model described above. We introduce a parameter
$\epsilon$ which represents the ratio between the characteristic time of
the internal dynamic inside a given macro-state (fast time scale) and
the characteristic time of evolution of the macro-state, namely the
characteristic time the system spends in a given macro-state before
going to the other one. For simplicity, we assume that both macro-states
contain the 
same number of micro-states. The macro-states are labelled by 0 and 1,
whereas the micro-states are labelled as 1, 2, \dots, $m$. We set
$M=\left\{ 1, 2, \dots, m \right\}$. 

The state of the particle is modelled by a time continuous Markov chain
$\overline{Y_t^{\epsilon}} = \left( \overline{X_t^{\epsilon}}, 
\overline{Z_t^{\epsilon}}\right)$,
which takes its values in the space $E=M\times \{0,1\}$. The first
coordinate of $\overline{Y_t^{\epsilon}}$ represents the micro-state of
the particle inside a given macro-state, and thus takes its value in $M$. The
second coordinate determines in which 
macro-state the particle is located at time $t$:
$\overline{Z_t^{\epsilon}}=0$ or $1$.

We denote by $\overline{Q}^{\epsilon}$ the transition matrix of the
process $\overline{Y_t^{\epsilon}}$. Let $Q_0$ and $Q_1$ be two $m\times
m$ matrices 
that determine the internal dynamic within each macro-state and let
$C_{0,1}$ and $C_{1,0}$ be two 
$m\times m$ matrices that determine the coupling between micro-states that
belong to different macro-states. The transition rates of
$\overline{Y_t^{\epsilon}}$ are given by 
\begin{eqnarray*}
\overline{Q}^{\epsilon}\left( \left(x,z\right), \left(x',z\right)
\right)
&=& 
Q_z\left(x,x'\right), 
\quad \text{$z=0$ or 1, $x \neq x'$},
\\
\overline{Q}^{\epsilon}\left( \left(x,z\right), \left(x',z'\right)
\right)
&=&
\epsilon C_{z,1-z}\left(x,x'\right) \quad \text{for $z\neq z'$}.
\end{eqnarray*}
Thus, $\overline{Q}^\epsilon$ is of the form
$$
\overline{Q}^\epsilon = \left(
\begin{array}{cc}
Q_0 &\epsilon C_{0,1} \\
\epsilon C_{1,0}  & Q_1
\end{array}
\right).
$$
\begin{remark}
\label{rem:convention}
As always for Markov jump processes, the diagonal entries of the
transition matrix are irrelevant. Our convention is to take them equal
to zero. 
\end{remark}

The process $\overline{Y_t^{\epsilon}}$ is a jump process. It means
that, when it is in a state $\left(x,z\right)$, then
\begin{itemize}
\item it stays there for a time $S$, which is a random variable
  distributed according to an exponential distribution of parameter 
$$
\overline{q}^{\epsilon}\left(x,z\right)
:=
\sum_{\left(x',z'\right)\in E \atop 
\left(x',z'\right) \neq \left(x,z\right)}
\overline{Q}^{\epsilon}
\left(\left(x,z\right),\left(x',z'\right)\right),
$$ 
that is $\PP\left(  S\leq t \right)=1-\exp \left(
  -\overline{q}^{\epsilon}\left(x,z\right)t \right)$.
\item At this time $S$, it jumps to another state. The probability
  that it jumps to the state $\left(x',z'\right)\neq (x,z)$ is given by 
$$
\frac{\overline{Q}^{\epsilon}\left(\left(x,z\right),\left(x',z'\right)\right)}{\overline{q}^{\epsilon}\left(x,z\right)}.
$$ 
\end{itemize}
Note that the paths of a jump process are by convention right
continuous, with left limits (they are thus c\`ad-l\`ag functions). 

 
\medskip

We are interested in the behaviour of a macroscopic observable, that is
a function of the slow variable $\overline{Z}_t^{\epsilon}$. The dynamic
inside a given macro-state, i.e. when the variable $z$ does not change,
has a characteristic time of the order of $O\left(1\right)$
(i.e. independent of $\epsilon$), whereas the characteristic time for
the particle to go from one macro-state to the other is of the order of
$O\left(\epsilon^{-1}\right)$. We therefore consider henceforth
the rescaled-in-time process 
$\left(\overline{Z}_{t/\epsilon}^{\epsilon}\right)_t$. We introduce the
process $Y^\epsilon_t:=\overline{Y}_{t/\epsilon}^{\epsilon}$, which is a
jump process of intensity matrix $Q^\epsilon$ given by 
\begin{equation}
\label{eq:def_Q_eps}
Q^\epsilon = \left(
\begin{array}{cc}
\epsilon^{-1} Q_0 & C_{0,1}  \\
\ C_{1,0}  & \epsilon^{-1} Q_1
\end{array}
\right).
\end{equation}
We assume that 
\begin{equation}
\label{eq:hyp}
\text{the matrices $Q_0$ and $Q_1$ are irreducible},
\end{equation} 
therefore admitting unique invariant measures denoted by $\pi_0$ and $\pi_1$,
respectively. 

\begin{remark}
\label{rem:convention2}
Due to our convention on the transition matrix (see
Remark~\ref{rem:convention}), the invariant measure $\pi$ of a
transition matrix $Q = \left\{ q_{i,j} \right\}_{1 \leq i,j \leq m}$
satisfies $\pi^T Q= \pi^T \Delta$,
where $\Delta$ is a diagonal matrix with $\Delta_i = \sum_{j=1}^m q_{i,j}$. 
\end{remark}

\paragraph{Definitions and notations}

We denote by $D_{\mathbb R}\left[0,\infty\right)$ the set of c\`ad-l\`ag
functions defined on $\left[0,\infty\right)$ and valued in $\mathbb
R$, and by
$C_{\mathbb R}\left[0,\infty\right)$ the set of continuous functions
defined on $\left[0,\infty\right)$ and valued in $\mathbb R$. Endowed
with the Skorohod metric (see e.g.~\cite[p.~116--118]{kurtz}),
$D_{\mathbb R}\left[0,\infty\right)$ is a complete separable space. 

A family of probability measures $\P_n$ on $D_{\mathbb
  R}\left[0,\infty\right)$ is said to {\it weakly} converge to a
probability measure $\P$ on $D_{\mathbb R}\left[0,\infty\right)$ if, for
any bounded continuous function $\Phi$ on $D_{\mathbb R}
\left[0,\infty\right)$,
$$
\lim_{n\rightarrow \infty} \int \Phi \,d\P_n=\int \Phi\,d\P.
$$
A family of random variables $X_n$ valued in $D_{\mathbb
  R}\left[0,\infty\right)$ is said to converge {\it in distribution} to
$X \in D_{\mathbb R}\left[0,\infty\right)$ if the distribution of $X_n$
weakly converges to the distribution of $X$. Otherwise stated, the
family $X_n$ converges in distribution to $X$ if, for any bounded
continuous function $\Phi$ on $D_{\mathbb R}\left[0,\infty\right)$, we
have
$$
\lim_{n\rightarrow \infty} \EE \left[ \Phi(X_n) \right] = 
\EE \left[ \Phi(X) \right].
$$
Throughout this article, we use the symbol $\Rightarrow$ for the
convergence in distribution of c\`ad-l\`ag stochastic processes or the weak
convergence of their corresponding distributions.

\paragraph{Main result}

We are now in position to present the main result of this section. 
For $z\in\set{0,1}$, we define
$$
\overline{C}_{z,1-z}\left(x\right) = \sum_{x'\in M} C_{z,1-z}\left(x,x'\right)
$$ 
and
\begin{equation}
\label{eq:def_lambda}
\lambda_z
=
\sum_{x\in M} \overline{C}_{z,1-z}\left(x\right) \pi_z \left(x\right)
=
\sum_{x\in M} \pi_z \left(x\right) \sum_{x'\in M} C_{z,1-z}\left(x,x'\right).
\end{equation} 

\begin{theorem}
\label{th1}
Let $Y^\epsilon_t=(X^\epsilon_t,Z^\epsilon_t)$ be the jump process of
intensity matrix~\eqref{eq:def_Q_eps} and starting from an initial
condition $Y_0=(X_0,Z_0)$ independent of $\epsilon$. 
We make the assumption~\eqref{eq:hyp}.
We denote by
$\P^\epsilon$ the 
distribution of the process $\left(Z^\epsilon_t\right)$ 
and by $\P$ the distribution of the jump process of initial condition
$Z_0$ and of intensity matrix 
\begin{equation}
\label{eq:def_Q_lambda}
\left(
\begin{array}{cc}
\ 0  & \lambda_0   \\
\ \lambda_1  & 0 
\end{array}
\right),
\end{equation}
where $\lambda_0$ and $\lambda_1$ are defined by~\eqref{eq:def_lambda}.
Then, we have $\mathcal P^\epsilon \Rightarrow  \mathcal P$ as $\epsilon$ goes
to $0$. 
\end{theorem}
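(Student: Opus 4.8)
The plan is to prove this path-space convergence by the martingale (perturbed test function) method, dealing with the fact that $Z^\epsilon$ is \emph{not} itself Markovian through an averaging corrector. Write the generator of the full jump process $Y^\epsilon$ as $\mathcal{A}^\epsilon=\epsilon^{-1}\mathcal{A}_0+\mathcal{A}_1$, where $\mathcal{A}_0$ carries the internal dynamics $Q_0,Q_1$ (acting only on the micro-variable $x$ at fixed $z$) and $\mathcal{A}_1$ carries the coupling $C_{0,1},C_{1,0}$. For a test function $\phi$ depending only on $z$, one has $\mathcal{A}_0\phi=0$, while $(\mathcal{A}^\epsilon\phi)(x,z)=\overline{C}_{z,1-z}(x)\,[\phi(1-z)-\phi(z)]$ still depends on $x$. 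The whole point is to replace $\overline{C}_{z,1-z}(x)$ by its $\pi_z$-average $\lambda_z$ from~\eqref{eq:def_lambda}, thereby recovering the candidate limit generator $(\mathcal{A}\phi)(z)=\lambda_z\,[\phi(1-z)-\phi(z)]$ attached to~\eqref{eq:def_Q_lambda}.

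I would first build the corrector. Writing $b(z)=\phi(1-z)-\phi(z)$, I look for $g_z\colon M\to\RR$ solving the Poisson equation $\mathcal{A}_0^z g_z=\lambda_z-\overline{C}_{z,1-z}$, where $\mathcal{A}_0^z$ is the internal generator in macro-state $z$. Under~\eqref{eq:hyp}, $\mathcal{A}_0^z$ has a one-dimensional kernel (the constants) and left-invariant vector $\pi_z$ (Remark~\ref{rem:convention2}); the Fredholm alternative then makes the equation solvable precisely because $\sum_{x\in M}\pi_z(x)\bigl(\lambda_z-\overline{C}_{z,1-z}(x)\bigr)=0$, which is exactly the definition~\eqref{eq:def_lambda} of $\lambda_z$. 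Setting $f^\epsilon(x,z)=\phi(z)+\epsilon\,g_z(x)\,b(z)$, a direct computation using $\mathcal{A}^\epsilon=\epsilon^{-1}\mathcal{A}_0+\mathcal{A}_1$ gives $\mathcal{A}^\epsilon f^\epsilon=(\mathcal{A}\phi)+\epsilon\,\mathcal{A}_1\bigl(g_z(x)b(z)\bigr)$: the $O(1)$ terms cancel thanks to the Poisson equation, and the remainder is $O(\epsilon)$ uniformly since everything lives on a finite space.

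Applying Dynkin's formula to $Y^\epsilon$, the process $f^\epsilon(Y^\epsilon_t)-f^\epsilon(Y^\epsilon_0)-\int_0^t(\mathcal{A}^\epsilon f^\epsilon)(Y^\epsilon_s)\,ds$ is a martingale. Since $f^\epsilon=\phi+O(\epsilon)$ and $\mathcal{A}^\epsilon f^\epsilon=\mathcal{A}\phi+O(\epsilon)$ uniformly, this yields
\[
\phi(Z^\epsilon_t)-\phi(Z^\epsilon_0)-\int_0^t(\mathcal{A}\phi)(Z^\epsilon_s)\,ds=M^\epsilon_t+R^\epsilon_t ,
\]
with $M^\epsilon$ a martingale and $\sup_{t\le T}\av{R^\epsilon_t}\to0$ as $\epsilon\to0$ for every $T$. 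This is the quantitative averaging principle: $Z^\epsilon$ solves the martingale problem for $\mathcal{A}$ up to a vanishing error.

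Finally I close by tightness and identification. As $Z^\epsilon$ lives in the compact set $\{0,1\}$, compact containment is automatic, and its jump rate is bounded by $K=\max_{x,z}\overline{C}_{z,1-z}(x)$ uniformly in $\epsilon$; using the strong Markov property of $Y^\epsilon$ one controls $\PP(Z^\epsilon \text{ jumps in } [\tau,\tau+\delta])\le K\delta$ for any stopping time $\tau$, so Aldous's criterion gives tightness of $(\P^\epsilon)$ in $D_{\RR}[0,\infty)$. For any subsequential limit, passing to the limit in the displayed identity (the error $R^\epsilon$ vanishes, and $w\mapsto\int_0^t(\mathcal{A}\phi)(w(s))\,ds$ is almost surely continuous for the Skorohod topology at the limit law, which has no fixed discontinuity time) shows that the limit solves the martingale problem for $\mathcal{A}$ started at $Z_0$. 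Since $\mathcal{A}$ is a bounded generator on a two-point space, this problem is well-posed with unique solution the jump process of rates $\lambda_0,\lambda_1$ of~\eqref{eq:def_Q_lambda}; hence every subsequential limit equals $\P$ and $\P^\epsilon\Rightarrow\P$. The main obstacle is conceptual rather than computational: it is the non-Markovianity of $Z^\epsilon$, which the corrector resolves, together with verifying that the remainder terms vanish uniformly and that the limiting martingale problem is well-posed so that tightness upgrades to genuine convergence in law.
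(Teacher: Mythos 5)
Your proof is correct, and it implements the same high-level strategy as the paper (averaging principle, Poisson equation via irreducibility and the Fredholm alternative, Aldous tightness, identification through the well-posed limiting martingale problem), but the packaging is genuinely different. The paper fixes the test function $\phi(z)=z$, writes the Dynkin identity~\eqref{ff}, and splits the drift into the averaged part plus a fluctuation $f$ with $\pi$-average zero; it then proves separately (Proposition~\ref{prop1}) that $\int_0^t f(Y^\epsilon_s)\,ds\to 0$ in $L^2(\Omega)$ by solving $\overline{L}^0u=f$ and exploiting the martingale $N^u_t$, extracts a limiting martingale via C-tightness of the quadratic variations, passes to the limit in the remaining drift term through the continuity Lemma~\ref{lemme_continuite}, and finally identifies the limit with Lemma~\ref{lemme2}. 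You instead fold the solution of the Poisson equation into the test function, $f^\epsilon=\phi+\epsilon\,g_z\,b$, so that $\mathcal A^\epsilon f^\epsilon=\mathcal A\phi+O(\epsilon)$ \emph{uniformly}; the entire fluctuation is then absorbed into a deterministic remainder $R^\epsilon$ with $\sup_{t\le T}|R^\epsilon_t|=O(\epsilon)$, which is cleaner than the paper's stochastic $L^2$ estimate and dispenses with the separate joint-convergence argument needed to add up the weak limits of $B^\epsilon$, $J^\epsilon$ and $M^\epsilon$. Your tightness argument (domination of the $z$-jump rate by $K=\max_{x,z}\overline{C}_{z,1-z}(x)$ and the strong Markov property) is also more elementary than the paper's optional-stopping/Chebyshev route. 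What your approach gives up is minor: the paper's explicit SDE~\eqref{EDS1} for the limit, which makes the identification via Lemma~\ref{lemme2} very concrete; in exchange you work with arbitrary bounded $\phi$ on $\{0,1\}$ and appeal directly to uniqueness of the martingale problem for a bounded generator, which is equally valid and generalizes more readily (indeed it is closer to what the paper itself does for Theorems~\ref{th2} and~\ref{energie}).
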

Note that, in~\eqref{eq:def_Q_lambda}, we have used the convention
detailed in Remark~\ref{rem:convention}.

\medskip

The above result confirms the intuition according to which, when $\epsilon$
goes to zero, the internal dynamic within each macro-state is speeded
up, thus attaining a local equilibrium where configurations are
distributed according to the 
invariant measures $\pi_0$ and $\pi_1$ within the macro-states. In the
limit when $\epsilon$ goes to 0, the transition from one macro 
state $z$ to the other one, $1-z$, occurs with the frequency
$\lambda_z$, which is a weighted average (over the micro-states $x$,
with weights given by the 
invariant measure $\pi_z$) of the frequencies
$\overline{C}_{z,1-z}(x)$. In turn, these frequencies are the transition
frequencies from the micro-state $x$ of the macro-state $z$ to the other
macro-state.

As already emphasized in the introduction, we
point out that the above theorem states a convergence result on the {\em
  path} $(Z^\epsilon_t)_{t \geq 0}$, and not only of the random variable
$Z^\epsilon_t$ at any time $t$.

\subsection{Proofs}
\label{section_1_2}

To simplify the notation, we first consider the case when both
macro-states are similar: in that case, $Q_0=Q_1=Q$ and $C_{0,1}
=C_{1,0}=C$. The proof of Theorem~\ref{th1} is performed in
Section~\ref{sec:actual_proof}, and uses some intermediate results shown
in Section~\ref{sec:ingredients}. 
We briefly mention in Section~\ref{sec:proof_non_symmetric}
how to adapt the proof to handle the general case.

\medskip

The following computation will be very useful in what follows. 
Recall that the generator of the process
$Y^\epsilon_t$ is given by 
\begin{multline*}
L^\epsilon \phi\left(x,z\right)
=
\sum_{x'\in M} \epsilon^{-1}
Q\left(x,x'\right)\left(\phi\left(x',z\right)-\phi\left(x,z\right)\right)
\\
+ \sum_{x'\in M} C\left(x,x'\right)
\left(\phi\left(x',1-z\right)-\phi\left(x,z\right)\right).
\end{multline*}
We refer the reader to the textbook~\cite[Section 4.2]{kurtz}
for more details on semi-groups and generators associated to jump
processes. 

\medskip

Taking 
$\phi(x,z) = 1_{z=1}(x,z)$ in the above relation, we obtain
$$
L^\epsilon 1_{z=1}\left(x,z\right)
=- \sum_{x'\in M} C\left(x,x'\right) 1_{z=1}(x,z) 
+ 
\sum_{x'\in M}C\left(x,x'\right) 1_{z=0}(x,z),
$$ 
and thus, taking $(x,z) = Y^\epsilon_t = (X^\epsilon_t,Z^\epsilon_t)$,
we have
$$
L^\epsilon 1_{z=1}\left(Y^\epsilon_t\right) 
=  
\sum_{x'\in M} C\left(X^\epsilon_t,x'\right) \left(1-2 Z^\epsilon_t
\right)
=
\overline{C}\left(X^\epsilon_t\right) \left( 1-2 Z^\epsilon_t \right)
$$
where $\overline{C}(x) = \sum_{x'\in M} C\left(x,x'\right)$.
We now define the process $(M_t^\epsilon)_{t\geq 0}$ by 
\begin{eqnarray} 
\label{eqfonda}
M^\epsilon_{t}
&=&
1_{z=1}(Y^\epsilon_t) - 1_{z=1}(Y^\epsilon_0) - \int^t_0 
L^\epsilon 1_{z=1}\left(Y^\epsilon_s\right) \, ds
\nonumber
\\
&=&
Z^\epsilon_t - Z_0 - \int^t_0 \sum_{x'\in M} 
C\left(X^\epsilon_s,x'\right)\left(1-2 Z^\epsilon_s \right)ds.
\end{eqnarray} 
Using Proposition~\ref{lemma1}, we see that $M^\epsilon_t$ is a
martingale with respect to the filtration $\mathcal
F_t^{\epsilon}=\sigma\left(Y^\epsilon_s,s\leq t\right)$, and that its quadratic
variation is given by
\begin{eqnarray}
\langle M^{\epsilon}\rangle _t 
& = & 
\int^t_0 \left(L^{\epsilon}  1_{z=1} \left(Y^\epsilon_s\right) -2 \, 
1_{z=1} \left(Y^\epsilon_s\right) \, L^{\epsilon} 1_{z=1}
\left(Y_s^{\epsilon} \right )\right) ds 
\nonumber
\\
& =& 
\int^t_0 \overline{C}\left(X^{\epsilon}_s\right) \left(1-2
  Z^{\epsilon}_s\right) 
-2 Z_s^{\epsilon} \, \overline{C}\left(X_s^{\epsilon}\right) \, 
\left(1-2Z_s^{\epsilon}\right) ds 
\nonumber
\\
& =& 
\int^t_0 
\overline{C}\left(X_s^{\epsilon}\right) \,
\left(1-2Z^{\epsilon}_s\right)^2 ds
\nonumber
\\
& =& \int^t_0 \overline{C}\left(X_s^{\epsilon}\right)ds  
\nonumber
\\
&=& \int^t_0 g\left(X_s^{\epsilon}\right)ds +\lambda t, 
\label{eq:quad_var_M}
\end{eqnarray}
where $\lambda = \lambda_0 = \lambda_1$ (see~\eqref{eq:def_lambda}) and
\begin{equation}
\label{eq:def_g}
g\left(x\right)
=
\overline{C}\left(x\right)-\lambda
=
\sum_{x'\in M}C\left(x,x'\right)-\lambda.
\end{equation}
We have used in the above computation the fact that
$\left(1-2Z^{\epsilon}_s\right)^2 = 1$, a direct consequence of the fact
that $Z^\epsilon_s = 0$ or 1.

In what follows, we will use the fact that
\begin{equation} 
\label{ff}
Z^\epsilon_t
= 
Z_0 + \int^t_0 f\left(Y^\epsilon_s\right)ds 
+
\int^t_0\lambda\left(1-2 Z^\epsilon_s \right) \, ds+ M^\epsilon_t
\end{equation}
with
\begin{equation} 
\label{f}
f\left(x,z\right)
=\left(\sum_{x'\in M}C\left(x,x'\right)-\lambda\right) \ \left(1-2z\right),
\end{equation}
which is a straightforward reformulation of~\eqref{eqfonda}.

\subsubsection{Some intermediate results}
\label{sec:ingredients}

The following results are useful in the proof of Theorem~\ref{th1}.

\begin{lemma}
\label{lemme1}
Let $L$ be a $m \times m$ matrix and let $x\in \mathbb{R}^m$. Assume that for any $y \in \mathbb{R}^m$ such that
$y^T L=0$, we have $y^T x=0$. Then, there exists $z \in \mathbb{R}^m$ such
that $L z=x$.
\end{lemma}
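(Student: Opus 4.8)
The plan is to recognize this statement as the finite-dimensional Fredholm alternative, namely the identity $\mathrm{Ran}(L) = \bra{\ker(L^T)}^\perp$ between the range of $L$ and the orthogonal complement of the kernel of its transpose. Indeed, the desired conclusion ``there exists $z$ with $Lz = x$'' is exactly the assertion $x \in \mathrm{Ran}(L)$, while the hypothesis is exactly the assertion that $x$ is orthogonal to every element of $\ker(L^T)$. So the whole lemma reduces to identifying these two conditions.

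First I would translate the hypothesis into operator-theoretic language. The condition $y^T L = 0$ is equivalent to $L^T y = 0$, i.e. $y \in \ker(L^T)$, and $y^T x = 0$ means $y \perp x$. Thus the assumption states precisely that $x$ is orthogonal to $\ker(L^T)$, that is $x \in \bra{\ker(L^T)}^\perp$. Next I would establish the elementary orthogonality relation $\mathrm{Ran}(L)^\perp = \ker(L^T)$: a vector $w$ is orthogonal to $\mathrm{Ran}(L)$ if and only if $w^T (Lz) = 0$ for all $z \in \RR^m$, if and only if $(L^T w)^T z = 0$ for all $z$, if and only if $L^T w = 0$. Taking orthogonal complements of both sides and invoking the double-complement identity $(V^\perp)^\perp = V$, valid for any subspace $V$ of $\RR^m$, gives $\mathrm{Ran}(L) = \bra{\ker(L^T)}^\perp$.

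Combining the two steps, the hypothesis places $x$ in $\bra{\ker(L^T)}^\perp = \mathrm{Ran}(L)$, which is exactly the existence of $z$ with $Lz = x$, concluding the proof. The main (and essentially only) subtlety is the appeal to finite-dimensionality through the double-orthogonal-complement identity $(V^\perp)^\perp = V$; everything else is a direct rewriting of the hypothesis. An equivalent route, should one prefer a contrapositive formulation, is to argue that if $x \notin \mathrm{Ran}(L)$ then one may orthogonally project $x$ onto $\mathrm{Ran}(L)$ and take $y$ to be the (nonzero) residual, which satisfies $y \perp \mathrm{Ran}(L)$, hence $L^T y = 0$, yet $y^T x = \norm{y}^2 \neq 0$, contradicting the assumption.
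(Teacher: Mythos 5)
Your proof is correct and follows essentially the same route as the paper's: the paper identifies the hypothesis as saying $x \in (V^\perp)^\perp = V$ where $V$ is the column span of $L$ (which is exactly $\mathrm{Ran}(L)$, and $V^\perp$ is exactly $\ker(L^T)$), then writes $x$ as a combination of the columns. Your Fredholm-alternative phrasing and the appeal to the double-orthogonal-complement identity in finite dimensions are the same argument in slightly different clothing.
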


\begin{proof}
We denote by $C_1$, \dots, $C_m$ the columns of the matrix $L$ and by
$V=\mbox{Span} \{C_1, \dots ,C_m\}$. The assumption on $x$ is that $x \in
\left(V^\perp\right)^\perp=V$. Therefore, $x$ can be written in the form
$x=\sum_{i=1}^m \alpha_i C_i$ for some coefficients $(\alpha_i)_{1 \leq
  i \leq m}$. Let $z=\sum_{i=1}^m \alpha_i e_i$, with
$\left(e_i\right)_{1\leq i \leq m}$ the canonical basis of
$\mathbb{R}^m$. We check that $z$ satisfies $Lz=x$. 
\end{proof}

\begin{lemma}
\label{lemme2}
Let $F= \{0, 1\}$, $Z_0$ be a random variable valued in $F$,
$\lambda_0,\lambda_1\geq 0$, and 
$\left(Z_t\right)_{t\geq 0}$ be a stochastic process on $F$. If the
process 
$$
M_t = Z_t-Z_0-\int_0^t \left( \lambda_0
  -\left(\lambda_0+\lambda_1\right) Z_s\right) ds
$$
is a martingale with respect to the natural filtration of
$\left(Z_t\right)_{t\geq 0}$, then $(Z_t)_{t\geq 0}$ is a Markov jump
process of initial condition $Z_0$ and of intensity matrix given by 
\begin{equation}
\label{Q_l0_l1}
R=\left(
\begin{array}{cc}
\ 0 & \lambda_0  \\
\ \lambda_1 & 0
\end{array}
\right).
\end{equation}
\end{lemma}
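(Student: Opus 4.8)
The plan is to recognize the hypothesis as the statement that $(Z_t)_{t\ge 0}$ solves the martingale problem associated with the generator of the target jump process, and then to invoke the well-posedness of that martingale problem to conclude. First I would compute the generator $L$ of the Markov jump process with intensity matrix $R$, using the paper's convention that diagonal entries vanish (Remark~\ref{rem:convention}): for a function $\phi$ on $F=\{0,1\}$ one has $L\phi(0)=\lambda_0\left(\phi(1)-\phi(0)\right)$ and $L\phi(1)=\lambda_1\left(\phi(0)-\phi(1)\right)$. Applying this to the particular function $f(z)=z$ gives $Lf(0)=\lambda_0$ and $Lf(1)=-\lambda_1$, which is exactly $Lf(z)=\lambda_0-(\lambda_0+\lambda_1)z$. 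Hence the process appearing in the hypothesis is precisely $M_t=f(Z_t)-f(Z_0)-\int_0^t Lf(Z_s)\,ds$, and the assumption states that it is a martingale for the natural filtration of $(Z_t)$.

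Second, I would upgrade this single-function statement to the full martingale problem. Since $F=\{0,1\}$ has only two points, every function $\phi:F\to\mathbb R$ is an affine combination $\phi=a\,\mathbf 1+b\,f$ of the constant function $\mathbf 1$ and $f(z)=z$. For the constant function the associated process is identically zero, hence trivially a martingale; by linearity of $\phi\mapsto L\phi$, of the integral, and of conditional expectation, the martingale property therefore holds for every $\phi$ on $F$. Consequently $(Z_t)_{t\ge 0}$ solves the martingale problem for the generator $L$, with initial law that of $Z_0$.

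Finally, I would appeal to the well-posedness (uniqueness) of the martingale problem for a jump process on a finite state space with bounded rates: there is a unique law on path space solving this martingale problem, and it is the law of the Markov jump process with initial condition $Z_0$ and generator $L$, equivalently intensity matrix $R$. This identifies $(Z_t)_{t\ge 0}$ as the desired process. The main obstacle is precisely this last step: the hypothesis furnishes only the martingale property, so the argument hinges on the equivalence between the martingale problem and the Markov jump process, together with its uniqueness. On a two-state space with finite rates this is classical (see the generator/martingale characterization in~\cite[Section~4.2]{kurtz} and the results recalled in Appendix~\ref{sec:app}), so I would cite that equivalence rather than reprove it; the only care needed is to verify that the reduction in the second step genuinely covers all test functions, which it does thanks to the two-dimensionality of the space of functions on $F$.
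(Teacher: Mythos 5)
Your proposal is correct and follows essentially the same route as the paper's own proof: you identify the hypothesis as the martingale property for the generator applied to the identity function, use the linearity of $\phi\mapsto M^\phi_t$ together with the fact that every function on $\{0,1\}$ is an affine combination of the constant function and the identity to obtain the full martingale problem, and then conclude by the uniqueness result (the paper's Lemma~\ref{appendixG}, a version of Kallenberg's theorem). No gaps.
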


\begin{proof}
We use the uniqueness result of the martingale problem
associated to the Markov jump process with intensity matrix $R$
introduced by D.W. Stroock and S.R.S. Varadhan (see e.g. \cite[Theorem
21.11]{kallenberg}). We recall a simple version of that result in
Lemma~\ref{appendixG} below. In view of that result, we only need to
check that, for any bounded function $\phi : F \mapsto \RR$, the process
$$ 
M^\phi_t
=
\phi\left(Z_t\right)-\phi\left(Z_0\right) - \int_0^t L\phi\left(Z_s\right)ds 
$$
is a martingale, where $L$ is the generator of the jump process
associated to the intensity matrix~\eqref{Q_l0_l1}, which reads
$$
L \phi(z) = \sum_{z' \in F} R(z,z') \left( \phi(z')-\phi(z) \right).
$$
We note that
$$
L \phi(z=0) = \lambda_0 \left( \phi(1)-\phi(0) \right),
\quad
L \phi(z=1) = \lambda_1 \left( \phi(0)-\phi(1) \right).
$$
Since $F= \{0, 1\}$, any bounded function $\phi : F \mapsto \RR$ is of the
form 
$$
\forall z \in F, \quad
\phi(z) = a \delta_{0z} + b \delta_{1z} 
= 
a + \left(b-a\right)\delta_{1z},
$$
for some $a$ and $b$, where $\delta_{1z}$ is the Kronecker symbol. The
application 
$\phi \mapsto M^\phi_t$ is obviously linear, and it vanishes for constant
functions. Therefore, to show that $M^\phi_t$ is a martingale for any
bounded function $\phi : F \mapsto \RR$, it is
sufficient to show that $M^{\delta_{1z}}_t$ is a martingale. On $F$, we
see that $\delta_{1z} = \Id$. We thus have
\begin{eqnarray*}
M^{\delta_{1z}}_t 
&=& 
M^{\Id}_t
\\ 
&=& 
Z_t-Z_0- \int_0^t L \Id\left(Z_s\right)ds
\\
&=&  Z_t-Z_0-\int_0^t 
\left(\lambda_0-\left(\lambda_0+\lambda_1\right)Z_s\right)ds.
\end{eqnarray*}
Using the assumption of the Lemma, we have that $M^{\delta_{1z}}_t$ is a
martingale. This concludes the proof.
\end{proof}

\begin{lemma}
\label{lemme_continuite}
Let $g: \RR \rightarrow \RR$ be a Lipschitz function. Then, the
function $\Phi$ defined by 
\begin{eqnarray*}
\Phi  : D_{\mathbb R}\left[0,\infty\right) 
&\rightarrow & 
C_{\mathbb R}\left[0,\infty\right)
\subset D_{\mathbb R}\left[0,\infty\right)
\\
x & \mapsto & \left(\int_0^t g\left(x\left(s\right)\right)ds \right)_t
\end{eqnarray*}
is continuous.
\end{lemma}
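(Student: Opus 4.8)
The plan is to prove that $\Phi$ is sequentially continuous for the Skorohod metric, which suffices since $D_{\mathbb R}[0,\infty)$ is a metric space. First I would check that $\Phi$ is well-defined with values in $C_{\mathbb R}[0,\infty)$: any $x\in D_{\mathbb R}[0,\infty)$ is bounded on each compact interval, and since $g$ is Lipschitz, $s\mapsto g(x(s))$ is bounded on compacts; hence $t\mapsto\int_0^t g(x(s))\,ds$ is finite and even Lipschitz in $t$, so $\Phi(x)$ is continuous. The decisive reduction is that on the subspace $C_{\mathbb R}[0,\infty)$ the Skorohod topology coincides with uniform convergence on compact sets, and that uniform-on-compacts convergence of continuous functions implies Skorohod convergence in $D_{\mathbb R}[0,\infty)$ (the identity time change works). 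Since $\Phi(x_n)$ and $\Phi(x)$ are all continuous, it is therefore enough to show: if $x_n\to x$ in $D_{\mathbb R}[0,\infty)$, then $\sup_{t\le T}|\Phi(x_n)(t)-\Phi(x)(t)|\to 0$ for every $T>0$.

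Next I would invoke the time-change characterization of Skorohod convergence. Fix $T>0$ and set $T'=T+1$. Convergence $x_n\to x$ provides strictly increasing continuous bijections $\lambda_n$ of $[0,\infty)$ with $\lambda_n(0)=0$ such that, writing $\gamma_n=\sup_{s\le T'}|\lambda_n(s)-s|$, $\varepsilon_n=\sup_{s\le T'}|x_n(\lambda_n(s))-x(s)|$ and $\ell_n=\sup_{s\ne u,\ s,u\le T'}\big|\log\tfrac{\lambda_n(u)-\lambda_n(s)}{u-s}\big|$, one has $\gamma_n\to0$, $\varepsilon_n\to0$ and $\ell_n\to0$. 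The finiteness of $\ell_n$ forces $\lambda_n$ to be bi-Lipschitz, hence absolutely continuous, with $e^{-\ell_n}\le\lambda_n'(s)\le e^{\ell_n}$ for a.e.\ $s$. The point to stress is that one \emph{cannot} estimate $\int_0^T|g(x_n(s))-g(x(s))|\,ds$ by the naive pointwise bound $K\int_0^T|x_n(s)-x(s)|\,ds$, because near a jump of $x$ the difference $x_n(s)-x(s)$ stays of order one; the integral (rather than pointwise) nature of $\Phi$ is precisely what makes the statement true, and it must be exploited through the substitution $s=\lambda_n(u)$.

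Then I would carry out the estimate. By the change of variables $r=\lambda_n(u)$ one has $\int_0^{\lambda_n(t)}g(x_n(r))\,dr=\int_0^t g(x_n(\lambda_n(u)))\,\lambda_n'(u)\,du$, so that $\int_0^{\lambda_n(t)}g(x_n(r))\,dr-\int_0^t g(x(u))\,du=\int_0^t[g(x_n(\lambda_n(u)))-g(x(u))]\,\lambda_n'(u)\,du+\int_0^t g(x(u))\,[\lambda_n'(u)-1]\,du$. With $g$ being $K$-Lipschitz, the first integral is bounded by $K\varepsilon_n\int_0^t\lambda_n'(u)\,du=K\varepsilon_n\,\lambda_n(t)\le K\varepsilon_n(T'+\gamma_n)$, and if $G_{T'}$ bounds $|g(x(\cdot))|$ on $[0,T']$ the second is bounded by $G_{T'}\,T'\,(e^{\ell_n}-1)$; both are uniform in $t\le T$ and tend to $0$. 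Finally I would replace $\int_0^{\lambda_n(t)}$ by $\int_0^{t}$: the error $\big|\int_t^{\lambda_n(t)}g(x_n(r))\,dr\big|\le\gamma_n\,\sup_{r\le T'}|g(x_n(r))|$ tends to $0$, using $\sup_n\sup_{r\le T'}|g(x_n(r))|<\infty$, which follows from $\sup_n\sup_{r\le T'}|x_n(r)|<\infty$ (itself a consequence of $\varepsilon_n\to0$ together with the local boundedness of $x$). Adding the three contributions yields $\sup_{t\le T}|\Phi(x_n)(t)-\Phi(x)(t)|\to0$, completing the argument.

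The step I expect to be the main obstacle is controlling the second integral $\int_0^t g(x(u))(\lambda_n'(u)-1)\,du$: a mere displacement bound $\gamma_n\to0$ does \emph{not} control $\int|\lambda_n'-1|$, and one genuinely needs the logarithmic-slope part $\ell_n\to0$ of the Skorohod metric (equivalently, that the time changes can be taken with derivative uniformly close to $1$) to make it small. The secondary technical point is the uniform-in-$n$ local boundedness of the $x_n$, which must be extracted from Skorohod convergence before replacing the upper limit of integration is legitimate.
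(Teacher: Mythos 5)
Your proof is correct, but it follows a genuinely different route from the paper's. You perform the change of variables $r=\lambda_n(u)$ inside the integral, which forces you to use the \emph{complete-metric} form of the Skorohod convergence criterion: time changes $\lambda_n$ that are not merely uniformly close to the identity but have logarithmic slope $\ell_n\to 0$, hence are absolutely continuous with $\lambda_n'$ uniformly close to $1$. That is a legitimate (equivalent) characterization of Skorohod convergence, and granting it, your three error terms are each estimated correctly; the small point about replacing the upper limit $\lambda_n(t)$ by $t$ and the uniform local boundedness of the $x_n$ is handled adequately. The paper instead uses only the weaker characterization of Proposition~\ref{convergence_D} --- $\sup_{[0,T]}|\lambda_n(s)-s|\to 0$ and $\sup_{[0,T]}|x_n(s)-x(\lambda_n(s))|\to 0$, with no slope control --- and splits the integrand as $[g(x_n(s))-g(x(\lambda_n(s)))]+[g(x(\lambda_n(s)))-g(x(s))]$; the first piece is killed by the Lipschitz bound, and the second by a subdivision of $[0,T]$ adapted to the jumps of $x$ (Billingsley's oscillation lemma for c\`ad-l\`ag functions), exploiting that the bad set near the subdivision points has small measure. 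Consequently your closing claim that ``one genuinely needs the logarithmic-slope part'' is not accurate as a statement about the lemma: the paper's subdivision argument shows the displacement bound alone suffices, because the troublesome set where $x(\lambda_n(s))$ and $x(s)$ straddle a jump has measure $O(r\delta)$. What your approach buys is brevity and the avoidance of the oscillation machinery; what the paper's buys is independence from the stronger form of the time-change criterion. Either way the lemma is established.
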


\begin{proof}
Let $(x_n)_{n\in\NN}$ be a sequence in $D_{\mathbb
  R}\left[0,\infty\right)$ and $x$ in $D_{\mathbb
  R}\left[0,\infty\right)$ such that $(x_n)_{n\in\NN}$ converges to $x$
in $D_{\mathbb R}\left[0,\infty\right)$ for the Skorohod topology. We
show that $(\Phi(x_n))_{n\in\NN}$ converges to $\Phi(x)$ in the
Skorohod topology. 

We first observe that, for any $y\in D_{\mathbb
  R}\left[0,\infty\right)$, the function $\Phi(y)$ is continuous.
Since the limit function $\Phi(x)$ is hence continuous, the convergence
of $(\Phi(x_n))_{n\in\NN}$ to $\Phi(x)$ in the Skorohod topology is
equivalent to the convergence of $(\Phi(x_n))_{n\in\NN}$ to $\Phi(x)$
according to the norm $\| \cdot \|_{C^0([0,T])}$, on any compact time
interval $[0,T]$ (see e.g.~\cite[p.~124]{billy2}). 

\medskip

We now proceed and show that, for any $T>0$, 
$\| \Phi(x_n) - \Phi(x) \|_{C^0([0,T])}$ goes to zero as $n$ goes to
$\infty$. Using the characterization of the convergence
of $(x_n)_{n\in\NN}$ to $x$ given in Proposition~\ref{convergence_D}, we
know that there exists a sequence of strictly increasing, continuous
maps $\lambda_n$ defined on $[0,\infty)$ satisfying~\eqref{eq:convergence_D1}
and~\eqref{eq:convergence_D2} below. We then have, for any $t \in [0,T]$,
\begin{eqnarray}
\nonumber
&& \left| \Phi \left(x_n\right)\left(t\right) -
  \Phi\left(x\right)\left(t\right)\right| 
\\
\nonumber
&= & 
\left| \int^t_0 \left(g\left(x_n\left(s\right)\right) -
    g\left(x\left(s\right)\right)\right)ds \right|
\\
& \leq & 
\hspace{-3mm}
\int^t_0
\left| g\left(x_n\left(s\right)\right) -
  g\left(x\left(\lambda_n\left(s\right)\right)\right)\right|ds
+ 
\hspace{-2mm}
\int^t_0 \left|
  g\left(x\left(\lambda_n\left(s\right)\right)\right) - 
g\left(x\left(s\right)\right)\right|ds. 
\label{eq:titi}
\end{eqnarray}
The first term of the right-hand side of~\eqref{eq:titi} tends to $0$ as
$n$ goes to $\infty$ uniformly on $[0,T]$. Indeed,  
\begin{eqnarray*}
\sup_{t\in [0,T]} \int^t_0 \left| g\left(x_n\left(s\right)\right) -
g\left(x\left(\lambda_n\left(s\right)\right)\right)\right|ds
 & \leq & 
T \sup_{s\in [0,T]}
\left|g\left(x_n\left(s\right)\right) - 
g\left(x\left(\lambda_n\left(s\right)\right) \right)\right| 
\\
& \leq & T \, C_g \sup_{s\in [0,T]} 
\left| x_n\left(s\right) - x\left(\lambda_n\left(s\right)\right) \right|,
\end{eqnarray*}
where $C_g$ is the Lipschitz constant of
$g$. Using~\eqref{eq:convergence_D2}, we deduce that
\begin{equation}
\label{eq:titi1}
\lim_{n \to \infty} 
\sup_{t\in [0,T]} \int^t_0 \left| g\left(x_n\left(s\right)\right) -
g\left(x\left(\lambda_n\left(s\right)\right)\right)\right|ds
=
0.
\end{equation}
We now turn to the second term of the right-hand side of~\eqref{eq:titi}. Take
$\alpha>0$. Using~\cite[Lemma~1 p. 110]{billy}, we know that there
exists a subdivision 
$$
0=t_0 < t_1 < \cdots < t_r=T
$$ 
of $[0,T]$ such that, for any $i$, 
$$
\sup\{ |x\left(s\right) - x\left(t\right)|, \ t_i \leq s \leq t \leq
t_{i+1} \} \leq \alpha.
$$
This result is based on the fact that (i) a continuous function on a
compact set is also uniformly continuous on this set, and (ii) for any
$\beta>0$, a c\`ad-l\`ag function
on a compact set has a finite number of jumps larger than the threshold
$\beta$. 

Using this subdivision of $[0,T]$, we bound the second term of the
right-hand side of~\eqref{eq:titi} by 
\begin{eqnarray}
\int^t_0 \left| g\left(x\left(\lambda_n\left(s\right)\right)\right) - 
g\left(x\left(s\right)\right)\right|ds 
& \leq & 
\sum_{i=0}^{r-1} 
\int^{t_{i+1}}_{t_i} \left| 
g\left(x\left(\lambda_n\left(s\right)\right)\right) - 
g\left(x\left(s\right)\right)\right|ds 
\nonumber
\\
& \leq & 
\sum_{i=0}^{r-1} 
C_g \int^{t_{i+1}}_{t_i} \left| x\left(\lambda_n\left(s\right)\right) -
x\left(s\right)\right|ds. 
\label{eq:tutu1}
\end{eqnarray}
Let us introduce $\delta>0$ such that for any $0 \leq i \leq r-1$, we have 
$2 \delta < t_{i+1}-t_i$. As there is a finite number of points $t_i$,
such a $\delta>0$ exists. Using the property~\eqref{eq:convergence_D1}
of $\lambda_n$, we know that there exists $N$ such that, for any $n>N$, 
we have $\dps \sup_{s \in [0,T]} |\lambda_n\left(s\right)-s| \leq \delta$. We
therefore deduce that, for any $n>N$, 
\begin{eqnarray}
&& 
\sum_{i=0}^{r-1}
\int^{t_{i+1}}_{t_i} | x\left(\lambda_n\left(s\right)\right) - 
x\left(s\right)|ds 
\nonumber
\\
&\leq &
\sum_{i=0}^{r-1}
\int^{t_{i+1}-\delta}_{t_i+\delta} |
x\left(\lambda_n\left(s\right)\right) -  x\left(s\right)|ds
+
4 r \delta \sup_{t\in [0,T+\delta]} |x\left(t\right)| 
\nonumber
\\
& \leq& 
\sum_{i=0}^{r-1}
\left( t_{i+1} - t_i - 2 \delta \right) \alpha +
4r\delta \sup_{t\in [0,T+\delta]} |x\left(t\right)| 
\nonumber
\\
& \leq& T \alpha + 4r\delta \sup_{t\in [0,T+\delta]} |x\left(t\right)|. 
\label{eq:tutu2}
\end{eqnarray}
Inserting~\eqref{eq:tutu2} in~\eqref{eq:tutu1}, we deduce that 
the second term of the right-hand side of~\eqref{eq:titi} is bounded by
$$
\int^t_0 \left| g\left(x\left(\lambda_n\left(s\right)\right)\right) - 
g\left(x\left(s\right)\right)\right|ds 
\leq 
C_g T \alpha + 4 C_g r\delta \sup_{t\in [0,T+\delta]} |x\left(t\right)|.
$$
As $\alpha$ and $\delta$ are arbitrary small, and $r$ only depends on
$\alpha$, we conclude that the second term of the right-hand side
of~\eqref{eq:titi} converges to $0$ uniformly in $t$ on
$[0,T]$. 

Collecting this result with the limit~\eqref{eq:titi1} on the first
term and~\eqref{eq:titi}, we deduce that
$$
\lim_{n \to \infty} \sup_{t\in [0,T]} 
\left| \Phi \left(x_n\right)\left(t\right) -
  \Phi\left(x\right)\left(t\right)\right| 
=
0.
$$
This concludes the proof of Lemma~\ref{lemme_continuite}.
\end{proof}

\begin{remark}  
If the function $g$ is not continuous, then $\Phi$ is not
continuous. Consider indeed a sequence $(x_n)_{n\in\NN}$ of real numbers
that converges from above to $x$, a discontinuity point of $g$. Denoting
$\Phi\left(x_n\right)$ the image by $\Phi$ of the constant function
equal to $x_n$, we see that, for any $t$, 
$$
\Phi\left(x_n\right)(t) -\Phi\left(x\right)(t)
\ \longrightarrow \
t\left(g\left(x+\right)-g\left(x\right)\right) \neq 0.
$$
\end{remark}

We conclude these intermediate results with the following proposition,
that will be useful to study the limit when $\epsilon \to 0$ of the
second term in the right-hand side of~\eqref{ff}. 
\begin{proposition}
\label{prop1}
Let $f$ be given by~\eqref{f}. 
Under the hypothesis of Theorem~\ref{th1}, we have, for any $t \geq 0$,
\begin{equation}
\label{eq:prop1}
\mathbb{E} \left[ \left( \int^t_0 f\left(Y^\epsilon_s\right) ds
  \right)^2 \right] 
\longrightarrow  0 \ \ \mbox{as $\epsilon \rightarrow 0$}. 
\end{equation}
\end{proposition}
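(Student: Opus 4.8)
The plan is to establish~\eqref{eq:prop1} through an averaging argument built on a corrector, i.e.\ a Poisson equation for the fast dynamics. The starting observation is that, by the very definitions~\eqref{eq:def_lambda} of $\lambda$ and~\eqref{eq:def_g} of $g$, the function $g$ has zero average against the invariant measure $\pi$ of $Q$: indeed $\sum_{x \in M} \pi(x) g(x) = \sum_{x\in M} \pi(x)\overline{C}(x) - \lambda = 0$. I would exploit this to solve the Poisson equation $\mathcal{L}_Q \psi = g$, where $\mathcal{L}_Q \psi(x) = \sum_{x' \in M} Q(x,x')\bra{\psi(x')-\psi(x)}$ is the generator of the internal dynamics. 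Writing $\mathcal{L}_Q = Q - \Delta$ in matrix form (with $\Delta$ as in Remark~\ref{rem:convention2}), its left kernel is, by irreducibility~\eqref{eq:hyp}, exactly the line spanned by $\pi$; since $\pi^T g = 0$, Lemma~\ref{lemme1} provides a solution $\psi : M \to \RR$ of $\mathcal{L}_Q \psi = g$.

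The next step is to turn $\psi$ into a corrector for the full two-component generator $L^\epsilon$. Set $h(x,z) = \psi(x)(1-2z)$. A direct computation, using $\mathcal{L}_Q \psi = g$ together with $1 - 2(1-z) = -(1-2z)$, gives
\begin{equation*}
L^\epsilon h(x,z) = \epsilon^{-1} f(x,z) + r(x,z), \qquad r(x,z) = -(1-2z)\sum_{x' \in M} C(x,x')\bra{\psi(x')+\psi(x)},
\end{equation*}
where the remainder $r$ is bounded uniformly in $\epsilon$ (since $M$ is finite and $1-2z = \pm 1$). Equivalently, $f = \epsilon L^\epsilon h - \epsilon r$, so that $f$ is, up to an $O(\epsilon)$ correction, equal to $\epsilon$ times the image of $h$ under the generator.

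I would then integrate in time and use Dynkin's formula. By Proposition~\ref{lemma1}, the process $N^\epsilon_t = h(Y^\epsilon_t) - h(Y^\epsilon_0) - \int_0^t L^\epsilon h(Y^\epsilon_s)\,ds$ is a martingale, whence
\begin{equation*}
\int_0^t f(Y^\epsilon_s)\,ds = \epsilon\bra{h(Y^\epsilon_t) - h(Y^\epsilon_0)} - \epsilon N^\epsilon_t - \epsilon \int_0^t r(Y^\epsilon_s)\,ds.
\end{equation*}
Taking squares and expectations, the boundary term and the $r$-term are $O(\epsilon^2)$, since $h$ and $r$ are bounded uniformly in $\epsilon$. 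For the martingale term, Proposition~\ref{lemma1} also yields
\begin{equation*}
\langle N^\epsilon\rangle_t = \int_0^t \bra{L^\epsilon(h^2) - 2h\,L^\epsilon h}(Y^\epsilon_s)\,ds,
\end{equation*}
whose integrand is the sum, over the admissible jumps of $Q^\epsilon$, of $\bra{h(\text{target})-h(\text{source})}^2$: the fast block contributes $\epsilon^{-1}\sum_{x' \in M}Q(x,x')\bra{\psi(x')-\psi(x)}^2$, which is $O(\epsilon^{-1})$, while the coupling block is $O(1)$. Hence $\mathbb{E}\left[\langle N^\epsilon\rangle_t\right] \le C\,t\,\epsilon^{-1}$ and $\mathbb{E}\left[(\epsilon N^\epsilon_t)^2\right] = \epsilon^2\, \mathbb{E}\left[\langle N^\epsilon\rangle_t\right] \le C\,t\,\epsilon$. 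Combining the three contributions gives $\mathbb{E}\left[\left(\int_0^t f(Y^\epsilon_s)\,ds\right)^2\right] = O(\epsilon)$, which tends to $0$ and proves~\eqref{eq:prop1}.

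The main subtlety, and the point where the estimate is genuinely tight, is the martingale term: the factor $\epsilon^{-1}$ in the fast block of $Q^\epsilon$ makes $\langle N^\epsilon\rangle_t$ blow up like $\epsilon^{-1}$, and the reason the argument still closes is precisely that solving the Poisson equation produces the compensating prefactor $\epsilon^2$ in $\mathbb{E}\left[(\epsilon N^\epsilon_t)^2\right]$, leaving a net $O(\epsilon)$. A secondary point worth checking with care is that the $z$-dependence of $h$ through $(1-2z)$ cancels cleanly: in the fast block the increments of $h$ are simply $\bra{\psi(x')-\psi(x)}(1-2z)$, so that the first term of $L^\epsilon h$ reproduces \emph{exactly} $\epsilon^{-1}f$, rather than an additional spurious $\epsilon^{-1}$ contribution, which is what makes the whole scheme consistent.
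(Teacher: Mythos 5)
Your proof is correct and takes essentially the same route as the paper's: the paper also solves the Poisson equation $\overline{L}^0 u=f$ via Lemma~\ref{lemme1} (your $h(x,z)=\psi(x)(1-2z)$ is exactly such a $u$, and your remainder $r$ is the paper's $-L^C u$), then writes $f=\epsilon L^\epsilon u-\epsilon L^C u$, applies Proposition~\ref{lemma1}, and closes the estimate through the $O(\epsilon^{-1})$ bound on the quadratic variation, yielding the same net $O(\epsilon)$ rate.
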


\begin{proof}
Since $E$ is a finite set, we identify functions $\phi:E\rightarrow \RR$
with the vectors $\bra{\bra{\phi(x,0)}_{x\in M},\bra{\phi(x,1)}_{x\in
    M}}\in \RR^{2m}$ throughout the proof. We likewise identify
operators with matrices.

Let $\overline{L}^0$ be the generator corresponding to the intensity matrix
$\overline{Q}^0$:
$$
\overline{L}^0 u\left(x,z\right) = \sum_{x' \in M} Q\left(x,x'\right)
\left( u\left(x',z\right) - u\left(x,z\right)\right).
$$
First, we claim that
\begin{equation}
\label{eq:claim}
\text{there exists a function $u : E \mapsto \RR$ such that
$\overline{L}^0 u=f$}.
\end{equation}
Indeed, as $Q$ is irreducible, the only
vectors $\mu \in \RR^{2m}$ such that $\mu^T \overline{L}^0=0$ are the vectors of
the form $\mu_{\alpha, \beta}=\left(\alpha \pi, \beta \pi\right)$ for
any $\alpha, \beta \in \mathbb R$ (this is a simple consequence of the
Perron-Frobenius theorem). 
Using~\eqref{f} and~\eqref{eq:def_lambda}, we compute
\begin{eqnarray}
\mu_{\alpha, \beta}^Tf  
&= & 
\sum_{x\in M} \alpha \pi\left(x\right) f\left(x,0\right) 
+ \sum_{x\in M} \beta \pi\left(x\right) f\left(x,1\right) 
\nonumber
\\        
&= &
\left(\alpha-\beta\right) \left( \sum_{x\in M} \pi\left(x\right) 
\sum_{x'\in M} C\left(x,x'\right)-\lambda\right) 
\nonumber
\\        
& = & 0.
\label{moyenne_nulle}
\end{eqnarray}
We thus see that, for any $\mu \in \RR^{2m}$ such that $\mu^T
\overline{L}^0=0$, we have $\mu^T f=0$. We are now in position to use
Lemma~\ref{lemme1}, from which we deduce the claim~\eqref{eq:claim}. 

\medskip

Second, using~\eqref{eq:claim}, we write that
\begin{eqnarray} 
\int^t_0 f\left(Y^\epsilon_s\right)ds 
& = & 
\int^t_0 \overline{L}^0 u\left(Y^\epsilon_s\right) ds 
\nonumber 
\\
& = & 
\epsilon \int^t_0 L^{\epsilon} u\left(Y^\epsilon_s\right) - \epsilon
\int^t_0 L^C u\left(Y^\epsilon_s\right) ds
\label{lint}
\end{eqnarray}
where we have used the decomposition
$$
\epsilon L^{\epsilon} u= \overline{L}^0u+\epsilon L^Cu
$$
with
$$
L^Cu\left(x,z\right) = \sum_{x' \in M} C\left(x,x'\right) 
\left( u\left(x',1-z\right) - u\left(x,z\right)\right).
$$
We successively bound the two terms of the right-hand side of~\eqref{lint}.
Introduce 
$\dps N^u_t = u\left(Y^\epsilon_t\right) - u\left(Y^\epsilon_0\right) -
\int^t_0 L^{\epsilon} u\left(Y^\epsilon_s\right) ds$. In view of
Proposition~\ref{lemma1}, we know that $N^u_t$ is a martingale of
quadratic variation given by  
$$
\langle N^u\rangle_t
=
\int^t_0 \left(L^{\epsilon}u^2\left(Y^\epsilon_s\right) - 2
  u\left(Y^\epsilon_s\right) \,
  L^{\epsilon}u\left(Y^\epsilon_s\right)\right) \, ds.
$$
For any $v:E \rightarrow \mathbb{R}$, we have
$$
\|L^{\epsilon}v\|_{\infty}
\leq
2m \|v\|_\infty \left( \epsilon^{-1} \|Q\|_\infty + \|C\|_\infty \right).
$$
Therefore, 
\begin{eqnarray*}
\mathbb{E} \left[ \left(N^u_t\right)^2 \right]  
& = & 
\mathbb{E}\left(\langle N^u\rangle_t\right) 
\\
& \leq & 
2 m t \left[ \|u^2\|_\infty \left( \epsilon^{-1} \|Q\|_\infty +
    \|C\|_\infty \right) + 2 \|u\|^2_\infty \left( \epsilon^{-1}
    \|Q\|_\infty + \|C\|_\infty \right) \right]
\\
& \leq & A + \epsilon^{-1} B,
\end{eqnarray*}
where $A$ and $B$ are positive constants independent of $\epsilon$. It
follows that the first term of the right hand side of~\eqref{lint}
satisfies
\begin{eqnarray}
\mathbb{E}\left[ \left( \epsilon \int^t_0 L^\epsilon
    u\left(Y^\epsilon_s\right)\right)^2 \right] 
& = & 
\mathbb{E}\left[ \left( \epsilon \left( N^u_t -
      u\left(Y^\epsilon_t\right) + u\left(Y^\epsilon_0\right) \right)
  \right)^2 \right]
\nonumber  
\\
& \leq & 
2 \epsilon^2 \left( \mathbb{E}\left[ \left(N^u_t\right)^2 \right] + 4
  \|u^2\|_\infty \right) 
\nonumber  
\\
& \leq & 2 \epsilon^2 \left( A' +\epsilon^{-1} B \right).
\label{eq:titi2} 
\end{eqnarray}
For the second term of the right hand side of~\eqref{lint}, we directly
obtain
\begin{equation}
\mathbb{E}\left[\left(\epsilon \int^t_0 L^C
    u\left(Y^\epsilon_s\right)\right)^2 \right] 
\leq 
\epsilon^2 t^2 \left( 4m \|C\|^2_\infty \|u\|^2_\infty \right).
\label{eq:titi3} 
\end{equation}
Collecting~\eqref{lint}, \eqref{eq:titi2} and~\eqref{eq:titi3}, we
obtain the desired result~\eqref{eq:prop1}. 
This concludes the proof of Proposition~\ref{prop1}.
\end{proof}


\subsubsection{Proof of Theorem~\ref{th1} (symmetric case)}
\label{sec:actual_proof}

All the convergences in this proof are taken when $\epsilon$ goes to
$0$. We will omit to recall it. The proof consists of four steps.  

\medskip
\noindent
\textbf{Step 1: the family of probability measures $\left(\mathcal
    P^\epsilon\right)_{\epsilon>0}$ is relatively compact} 

We use the tightness criterion of Theorem~\ref{critere}, and check
that its conditions~\eqref{3.21i} and~\eqref{4.4} are satisfied.

As the variables $Z^\epsilon_t$ take only two values, 0 and 1,
the condition~\eqref{3.21i} is trivially satisfied with the choices
$K=1$ and $n_0=1$. 

Let us now show that the condition~\eqref{4.4} is
satisfied. Let $N \in \mathbb N$, $\alpha >0$, $\theta > 0 $ and
$\epsilon > 0$. Let $S$ and $T$ be two $\mathcal F^\epsilon$-stopping
times such that $S\leq T \leq S + \theta \leq N$. Recall that a random
variable $T:\bra{\O,\bra{\F_t}_{t\geq 0}}\rightarrow
\RR^+\cup\set{\infty}$ is a stopping time if, for any $t\geq 0$, the set
$\set{T\leq t}$ is $\F_t$-measurable. Using~\eqref{eqfonda}, we have
\begin{equation}
\label{eq:titi4}
\left| Z^\epsilon_T - Z^\epsilon_S \right|  
\leq  
\left| \int^T_S \sum_{y\in M} C\left(X^\epsilon_s,y\right) 
\left(1-2 Z^\epsilon_s \right) ds \right| + 
\left| M^\epsilon_T - M^\epsilon_S \right|. 
\end{equation}
The first term of the right-hand side of~\eqref{eq:titi4} is bounded as
follows: 
\begin{equation} 
\label{terme1}
\left| \int^T_S \sum_{y\in M} C\left(X^\epsilon_s,y\right) \left(1-2
    Z^\epsilon_s \right) ds \right| 
\leq 
\left| T -S \right| m \| C \|_\infty
\leq 
\theta m \| C \|_\infty.
\end{equation}
To bound the second term of the right-hand side of~\eqref{eq:titi4}, we
use the Tchebytchev inequality: 
\begin{equation} 
\label{eq:titi5}
\mathbb P\left( \left| M^\epsilon_T - M^\epsilon_S \right| 
\geq \alpha \right)
\leq 
\frac{\mathbb E \left| M^\epsilon_T - M^\epsilon_S \right|^2}{\alpha^2}.
\end{equation}
We denote by $\widetilde{M}_t^\epsilon = M^\epsilon_{t+S} - M^\epsilon_S$
and $\widetilde{\mathcal F}^\epsilon_t = \mathcal F^\epsilon_{t+S}$. As
$S$ is a bounded stopping time, we infer from the optional stopping 
theorem (see e.g.~\cite[Theorem 3.2]{revuz}) that
$\widetilde{M}^\epsilon$ is a $\widetilde{\mathcal
  F}^\epsilon$-martingale, of quadratic variation
$$
\langle \widetilde{M}^\epsilon \rangle_t
=
\langle M^\epsilon \rangle_{S+t} - \langle M^\epsilon \rangle_S.
$$
In particular, we have
$$
\langle \widetilde{M}^\epsilon \rangle_{T-S} =
\langle M^\epsilon \rangle_T - \langle M^\epsilon \rangle_S.
$$
It follows that
\begin{eqnarray}
\mathbb E \left[ | M^\epsilon_T - M^\epsilon_S |^2 \right] 
&=& 
\mathbb E \left[ | \widetilde{M}^\epsilon_{T-S} |^2 \right] 
\nonumber 
\\
& =& 
\mathbb E \left[ \langle \widetilde{M}^\epsilon\rangle _{T-S} \right] 
\nonumber 
\\
& =& 
\mathbb E \left[ \langle M^\epsilon\rangle _{T}-\langle M^\epsilon\rangle _S\right]
\nonumber  
\\
& =& 
\mathbb E \left[ \int^T_S g\left(X_s^\epsilon\right)ds +\lambda
  \left(T-S\right)\right]
\nonumber  
\\
& \leq & 
\theta \left( \|g\|_{\infty}+\lambda \right),
\label{maj_vq}
\end{eqnarray}
where we have used~\eqref{eq:quad_var_M} and where $g$ is defined
by~\eqref{eq:def_g}. We then infer from~\eqref{eq:titi5} that
\begin{equation} 
\label{eq:titi6}
\mathbb P\left( \left| M^\epsilon_T - M^\epsilon_S \right| 
\geq \alpha \right)
\leq 
\frac{\theta \left( \|g\|_\infty + \lambda \right)}{\alpha^2}.
\end{equation} 
We deduce from~\eqref{eq:titi4}, \eqref{terme1} and~\eqref{eq:titi6}
that the condition~\eqref{4.4} of Theorem~\ref{critere} below is
satisfied. 

\medskip

Assumptions~\eqref{3.21i} and~\eqref{4.4} being satisfied, we can apply
Theorem~\ref{critere}, which implies that the family of probability
measures $(\P^\epsilon)_\epsilon$ is tight. In view of Prohorov's
theorem (see e.g.~\cite[Theorem 2.2]{kurtz}), this implies that 
the family $\left(\mathcal P^\epsilon\right)_{\epsilon>0}$ is
relatively compact. 

There thus exists a sub-family of
$\left(\mathcal P^\epsilon\right)_\epsilon$, that we denote
$\left(\mathcal P^{\epsilon'}\right)_{\epsilon'}$, which is
convergent. Otherwise stated, there exists a process $Z$ such that
$Z^{\epsilon'}\Rightarrow Z$. 

\medskip
\noindent
\textbf{Step 2: there exists a martingale $M_t$ and a sub-family
  $M_t^{\epsilon'}$ such that $M_t^{\epsilon'} \Rightarrow M_t$}

In view of~\cite[Theorem VI.4.13]{jacod}, a sufficient criterion for
$\left(M^\epsilon\right)$ to be relatively compact is that
$\left(\langle M^\epsilon\rangle \right)$ is C-tight. Let us check this
criterion. We have shown above (see~\eqref{eq:quad_var_M}) that
$$
\langle M^{\epsilon}\rangle _t 
=
\int^t_0 g\left(X_s^{\epsilon}\right)ds +\lambda t, 
$$
where $g$ is defined by~\eqref{eq:def_g}. Therefore, the family of paths
$\left(\langle M^\epsilon\rangle \right)_{\epsilon>0}$ is uniformly
Lipschitz, and hence C-tight (see~\cite[Definition VI.3.25 and
Proposition VI.3.26]{jacod}). We can thus consider a sub-family of 
$\left(M_t^\epsilon\right)_{t\geq 0}$, that we denote
$\left(M_t^{\epsilon'}\right)_{t\geq 0}$, which weakly converges to a
process $M$. Using~\cite[Proposition~IX.1.1]{jacod}, we know that the
process $(M_t)_{t\geq 0}$ is a martingale with respect to its natural
filtration.

\medskip
\noindent
\textbf{Step 3: equation satisfied by $Z$}

We have shown at the end of Step 1 that there exists a process $Z$ and a
sub-family $Z^{\epsilon'}$ such that $Z^{\epsilon'} \Rightarrow Z$. We
now identify a stochastic differential equation satisfied by
$(Z_t)_{t\geq 0}$. 

Recall first that $(Z_t^\epsilon)_{t\geq 0}$ satisfies~\eqref{ff}, namely
\begin{equation} 
\label{EDS1_pre}
Z_t^\epsilon = Z_0 + \int_0^t f\left(Y^\epsilon_s\right)ds 
+ \int^t_0 \lambda \left(1-2Z_s^\epsilon\right)ds +M_t^\epsilon.
\end{equation}
Passing to the limit $\epsilon' \to 0$, let us show that $(Z_t)_{t\geq 0}$
satisfies
\begin{equation} 
\label{EDS1}
Z_t=Z_0 + \int^t_0 \lambda\left(1-2Z_s\right)ds + M_t.
\end{equation}
We first consider $\dps B_t^\epsilon = \int_0^t
f\left(Y^\epsilon_s\right)ds$. With the same techniques as above, we can
show that $\left(B_t^\epsilon\right)$ is a relatively compact family. There
thus exists $\left(B_t\right)$ and a sub-family
$\left(B_t^{\epsilon'}\right)$ such that $B^{\epsilon'} \Rightarrow B$.
We infer from Proposition~\ref{prop1} that, for all $t\geq 0$,
$B_t^\epsilon$ converges to $0$ in $L^2(\O)$, hence $\mathbb E \left[
  B_t^2 \right]=0$ for all $t\geq 0$. It follows that the family
$\left(B_t^{\epsilon'}\right)$ converges to $0$ in distribution. 

We next turn to $\dps J_t^\epsilon = \int_0^t \lambda \left(1-2
  Z^\epsilon_s\right)ds$. Introduce $\dps J_t = \int_0^t \lambda
\left(1-2 Z_s\right)ds$. The function $g : z \mapsto \lambda \left(1-2
  z\right)$ is Lipschitz on $\mathbb R$, thus, using
Lemma~\ref{lemme_continuite}, we know that the function
\begin{eqnarray*} 
\Phi :D_{\mathbb R}\left[0,\infty\right) 
&\longrightarrow& 
D_{\mathbb R}\left[0,\infty\right) 
\\
z & \mapsto & \left(\int_0^t \lambda \left( 1 - 2 z\left(s\right)
  \right) ds \right)_t
\end{eqnarray*}
is continuous. The convergence $Z^{\epsilon'} \Rightarrow Z$ therefore
implies that 
$$
J^{\epsilon'} = \Phi(Z^{\epsilon'}) \Rightarrow \Phi(Z) = J.
$$
We have thus obtained that all the terms in~\eqref{EDS1_pre} weakly
converge. It remains to show that we can add up the weak limits. To do
so, we show with the same techniques as before that the family
$\left(B^\epsilon,J^\epsilon,M^\epsilon\right)$ is relatively compact, and that
the limit of any sub-family has as marginal distributions those of $B$,
$J$ and $M$. We conclude that 
$B^{\epsilon'}+J^{\epsilon'}+M^{\epsilon'} \Rightarrow B+J+M$. Passing
to the limit $\epsilon' \to 0$ in~\eqref{EDS1_pre}, we then indeed
obtain~\eqref{EDS1}. 

\medskip
\noindent
\textbf{Step 4: conclusion}

We infer from~\eqref{EDS1} (where, we recall, $M_t$ is a martingale) and
Lemma~\ref{lemme2} (with $\lambda_0=\lambda_1=\lambda$) that $\left( Z_t
  \right)_{t \geq 0}$ is a Markov jump process of initial condition
  $Z_0$ and of intensity matrix given
$$
\bra{
\begin{array}{cc}
  0 &\lambda\\
  \lambda & 0
\end{array}
}.
$$
The process $Z$ is thus uniquely defined. 

It follows that all convergent sub-families $Z^{\epsilon'}$ have the
same limit $Z$. The whole sequence $Z^\epsilon$ therefore converges to
this common limit $Z$. This concludes the proof of
Theorem~\ref{th1} in the symmetric case.

\subsubsection{Non-symmetric case}
\label{sec:proof_non_symmetric}

In this Section, we briefly sketch the proof in the non-symmetric case,
that is when 
$Q_0 \neq Q_1$ or $C_{0,1} \neq C_{1,0}$ in~\eqref{eq:def_Q_eps}. The
structure of the proof is similar to that in the symmetric case. 

First, the generator associated to the process $\bra{Y^\epsilon_t}$ reads
\begin{multline*}
L^\epsilon \phi\left(x,z\right)
=
\sum_{x'\in M} \epsilon^{-1} Q_z\left(x,x'\right)
\left(\phi\left(x',z\right) - \phi\left(x,z\right)\right) 
\\
+ \sum_{x'\in M} C_{z,1-z}\left(x,x'\right) 
\left(\phi\left(x',1-z\right) - \phi\left(x,z\right)\right).
\end{multline*}
Choosing the function $\phi\left(x,z\right)=z$, we see that
$$
L^\epsilon \phi\left(x,z\right) =  
\sum_{x'\in M}C_{z,1-z}\left(x,x'\right)\left(1-2z\right)=f(x,z)+h(z),
$$
where we have introduced (recall~\eqref{eq:def_lambda})
$$
f\left(x,z\right)
=\bra{\sum_{x'\in M}C_{z,1-z}\left(x,x'\right)-\lambda_z}\left(1-2z\right)
$$
and
$$
h(z) 
= 
(1-2z) \lambda_z
=
(1-2z) \sum_{x,x' \in M} C_{z,1-z}(x,x') \pi_z(x).
$$
Using again Proposition~\ref{lemma1}, we see that the process
\begin{eqnarray}
M^\epsilon_t
&=&
\phi(Y^\epsilon_t) - \phi(Y^\epsilon_0) - \int_0^t L^\epsilon
\phi(Y^\epsilon_s) \, ds
\label{eqje}
\end{eqnarray}  
is a martingale. Using the above notation, the equation~\eqref{eqje} can be recast as
\begin{equation}
\label{eqje_bis}
Z^\epsilon_t
= 
Z_0 + \int^t_0 f\left(Y^\epsilon_s\right)ds 
+ \int^t_0 h\left(Z^\epsilon_s\right)ds + M^\epsilon_t.
\end{equation}
To pass to the limit $\epsilon \to 0$ in the above equation, we follow
the same lines as in the proof detailed in Sections~\ref{sec:ingredients}
and~\ref{sec:actual_proof}. 

\medskip

Consider the second term of the right-hand side of~\eqref{eqje_bis}.
As in the proof of Proposition~\ref{prop1}, we can show that 
$\mu^T f=0$ for any $\mu \in \RR^{2m}$ such that $\mu^T \overline{L}^0=0$, which
are vectors of the form 
$\left(\alpha \pi_0,\beta \pi_1 \right)$ for any $\alpha$ and $\beta$ in
$\RR$. This implies that
$\dps \int^t_0 f\left(Y^\epsilon_s\right)ds$ converges to $0$ in
$L^2\left(\O\right)$ for any $t\geq 0$. 

\medskip

We turn now to the third term of the right-hand side
of~\eqref{eqje_bis}. Let $\widetilde{h}$ be the affine function defined
on $\RR$ by $\widetilde{h}(0) = h(0)$ and $\widetilde{h}(1) = h(1)$. The
function $\widetilde{h}$ is obviously Lipschitz on $\RR$, hence, using
Lemma~\ref{lemme_continuite}, we know that the function 
\begin{eqnarray*}
\Phi  : D_{\mathbb R}\left[0,\infty\right) 
&\rightarrow& 
D_{\mathbb R}\left[0,\infty\right)
\\
z & \mapsto & \left(\int_0^t \widetilde{h}(z(s)) \, ds \right)_t
\end{eqnarray*}
is continuous. Since $\dps \int^t_0 h\left(Z^\epsilon_s\right)ds
= \int^t_0 \widetilde{h}\left(Z^\epsilon_s\right)ds$, this allows to
pass to the limit in that term.

\medskip

As in Section~\ref{sec:actual_proof} (Step 3 of the proof), we can thus
pass to the limit $\epsilon \to 0$ in~\eqref{eqje_bis}, and show that
$Z^\epsilon$ 
converges in distribution to a process $Z$, that satisfies
\begin{eqnarray*}
Z_t
&=& 
Z_0 + \int^t_0 h(Z_s) ds + M_t
\\
&=&
Z_0 + \int^t_0 \left[ \lambda_0 -Z_s \left( \lambda_0 + \lambda_1 \right)
\right] ds + M_t,
\end{eqnarray*}
where $M$ is a martingale. We then infer from Lemma~\ref{lemme2} that
$\bra{Z_t}_{t\geq 0}$ is a jump process on $\{0, 1\}$, of initial
condition $Z_0$ and of intensity matrix
$$
\left(
\begin{array}{cc}
\ 0& \lambda_0
\\
\ \lambda_1& 0
\end{array}
\right),
$$
as claimed in Theorem~\ref{th1}.

\subsection{Numerical illustration}
\label{1}

We have implemented the model presented in Section~\ref{section_1_1}.
As shown on Fig.~\ref{fig:puits}, the energy wells can be gathered in two
macro-states (each of them containing $m$ 
micro-states) separated by a high potential energy barrier. The
transitions are only possible from one well to its two nearest
neighbours. In addition, we apply periodic
boundary conditions. The matrices $Q_0$, $Q_1$, $C_{0,1}$ and $C_{1,0}$
of the intensity matrix~\eqref{eq:def_Q_eps} read
$$
Q_0=Q_1=Q 
\quad \text{and} \quad
C_{0,1}=C_{1,0}=C
$$
with
$$
Q = \left(
\begin{array}{ccccc}
0 & q & & &  \\
q & 0 & q & & \\
\ddots & \ddots & \ddots & \\
 & &q &0 & q \\
 & & & q & 0 \\ 
\end{array}
\right),
\quad
C = \left(
\begin{array}{ccccc}
0 & \cdots&  &0  &  c  \\
0 & \cdots &  &  & 0\\
\vdots & & & & \vdots \\
c & 0 & \cdots&  & 0 \\ 
\end{array}
\right).
$$
We work with $q=c=1$. 

We are interested in the distribution of the first exit time
$S^\epsilon_0$ from a macro-state. From Theorem~\ref{th1}, we know that,
in the limit $\epsilon$ going to $0$, $S^\epsilon_0$ follows an exponential
distribution of parameter $\lambda=2c/m$ (independently of what the
initial condition of the system is). In order to quantify the
convergence of the 
distribution of $S^\epsilon_0$ to the predicted distribution, we
consider the $L^1$ norm of the difference of the densities: 
\begin{equation}
\label{eq:err_L1}
\text{err}_{L^1}
=
\int_0^\infty |f-f^\epsilon|
\approx 
\frac{1}{n} \sum_{i=1}^n 
|f\left(i\Delta x\right)-f^\epsilon_i|,
\end{equation}
where $f\left(x\right)=\lambda e^{-\lambda x}$ is the limit
distribution and $f^\epsilon$ is the distribution of
$S_0^\epsilon$. This latter distribution is calculated on the bounded
interval $[0,s]$ with $s=n \Delta x$ on a grid of size $\Delta x$:
$\dps f_i^\epsilon \approx \frac{1}{\Delta x} 
\sum_{x\in[i\Delta x, (i+1)\Delta x]}f^\epsilon(x)$ for any $i \in [1,n]$. 
In the sequel, we work with $\Delta x = 0.05$ and $s=n \Delta x =5$.

\begin{remark}
Other criteria can also be considered to characterize the convergence of the
probability distribution $f^\epsilon$ towards $f$. One example is the
discrepancy, which is the difference (in $L^\infty$ norm) of the
cumulative distribution functions:
\begin{equation}
\label{eq:discrepancy}
D=\sup_{A\geq 0} \left| \int_0^A f - \int_0^A f^\epsilon \right|.
\end{equation}
We have used this criterion e.g. on Fig.~\ref{faux} 
below. 
\end{remark}
 
We first consider how results depend on $\epsilon$. We work with a fixed
initial condition, namely $Y_0=\left(0,0\right)$. At the initial time,
the particle is in the first macro-state, and in the micro-state which is
the closest to the energy barrier between the two macro-states (see
Fig.~\ref{fig:puits}).

On
Figs.~\ref{moyenne_carryon} and~\ref{variance_carryon}, we show the
convergence of the empirical expectation and variance of $S^\epsilon_0$
to the asymptotic value (we have considered $10^4$ independent and
identically distributed realizations of the process to compute
95 \% confidence intervals). We indeed
observe convergence of both quantities to their asymptotic limits when
$\epsilon \to 0$. 

\begin{figure}[h]
\begin{center}
\psfrag{moyenne de S0}{}
\psfrag{'moyenne_ene.txt'}{\tiny \hspace{0.7cm} mean}
\psfrag{1000}{\tiny $10^3$}
\psfrag{100}{}
\psfrag{0.1}{}
\psfrag{0.001}{}
\psfrag{epsilon}{\tiny \hspace{-3.5cm} epsilon ($m=3$) \hspace{0.7cm} epsilon ($m=5$) \hspace{1cm} epsilon ($m=7$)}
\includegraphics[scale=0.75]{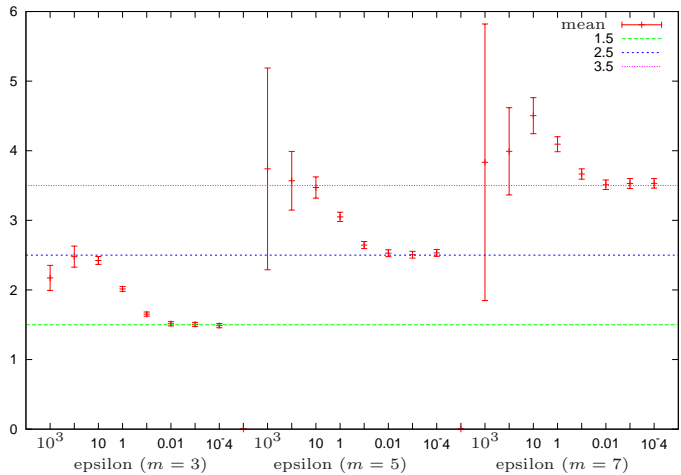}
\end{center}
\caption{Empirical expectation of $S^\epsilon_0$ as a function of
  $\epsilon$, for $m=3$ (left), $m=5$ (middle) and $m=7$ (right). The
  asymptotic values (when $\epsilon \to 0$) are also represented (solid
  lines).
\label{moyenne_carryon}}
\end{figure}

\begin{figure}[h]
\begin{center}
\psfrag{variance de S0}{}
\psfrag{'variance_ene.txt'}{\tiny \hspace{0.3cm} variance}
\psfrag{1000}{\tiny $10^3$}
\psfrag{100}{}
\psfrag{0.1}{}
\psfrag{0.001}{}
\psfrag{epsilon}{\tiny \hspace{-3.5cm} epsilon ($m=3$) \hspace{0.7cm} epsilon ($m=5$) \hspace{1cm} epsilon ($m=7$)}
\includegraphics[scale=0.75]{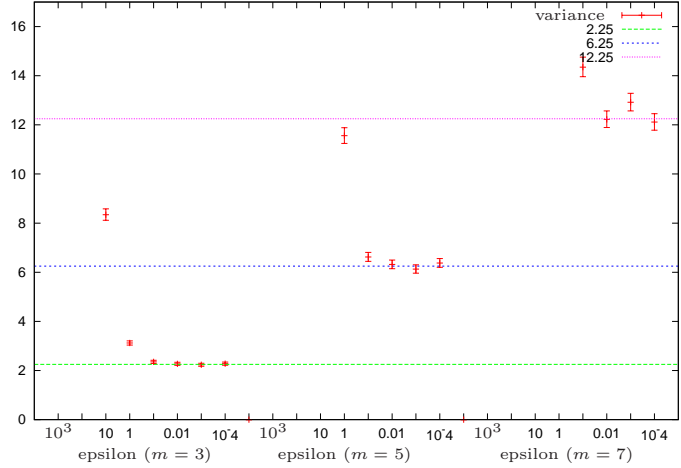}
\end{center}
\caption{Empirical variance of $S^\epsilon_0$ as a function of
  $\epsilon$, for $m=3$ (left), $m=5$ (middle) and $m=7$ (right). The
  asymptotic values (when $\epsilon \to 0$) are also represented (solid
  lines).
\label{variance_carryon}}
\end{figure}

On Fig.~\ref{hist_3}, we show the histogram of $S^\epsilon_0$ in the
case $m=20$ 
for two values of $\epsilon$. We again oberve a good
qualitative agreement with the limit distribution for small enough
$\epsilon$. This can be quantified by looking precisely at the
convergence of the distribution of $S^\epsilon_0$ to the asymptotic
distribution when $\epsilon$ goes to 0, for different values of $m$ (see
Fig.~\ref{faux}). The left part of that figure seems to show that
the convergence slows down when the number $m$ of micro-states within a
macro-state increases. 

\begin{figure}[h!]
\centerline{
\includegraphics[scale=0.4]{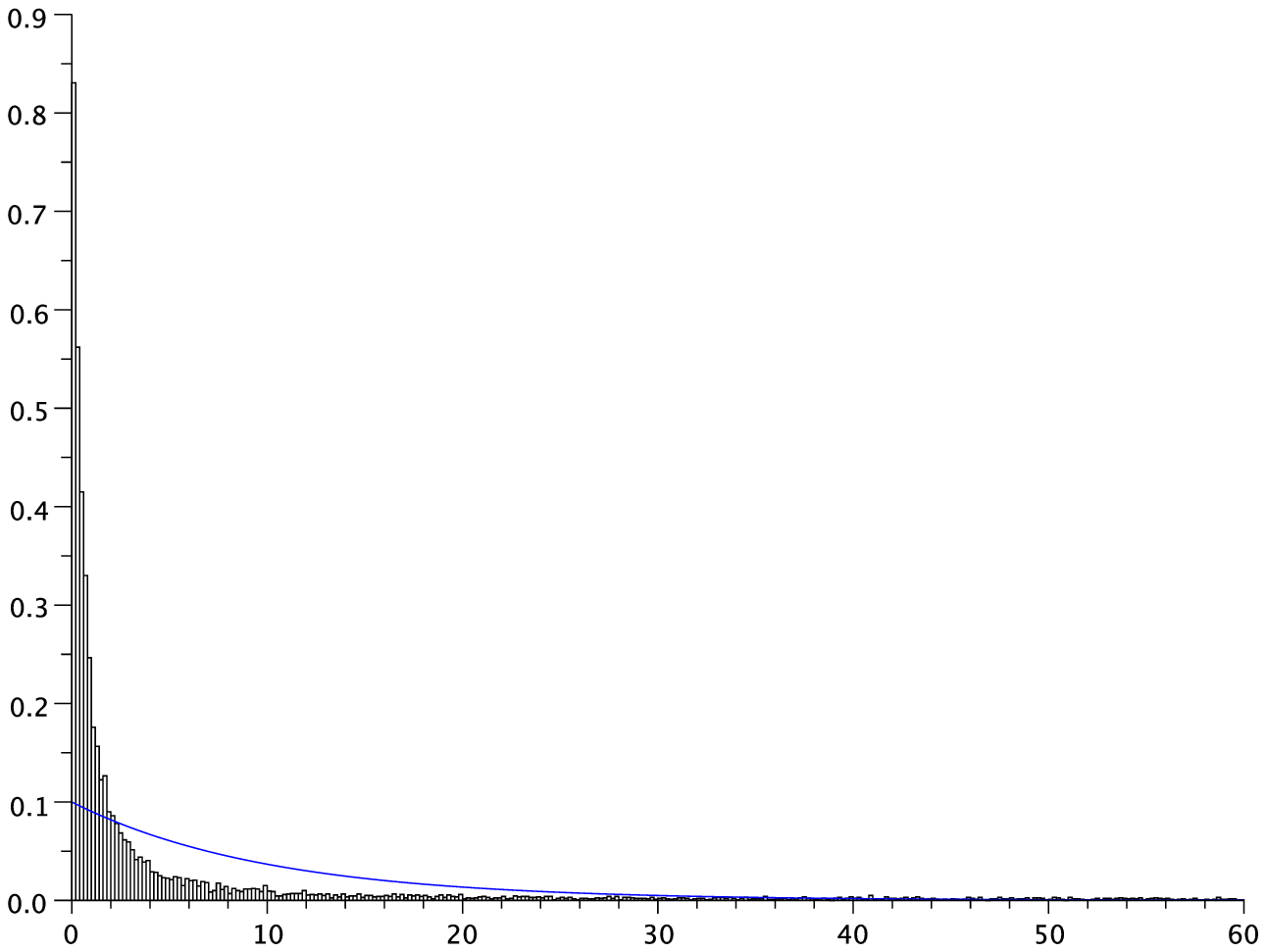}
\includegraphics[scale=0.4]{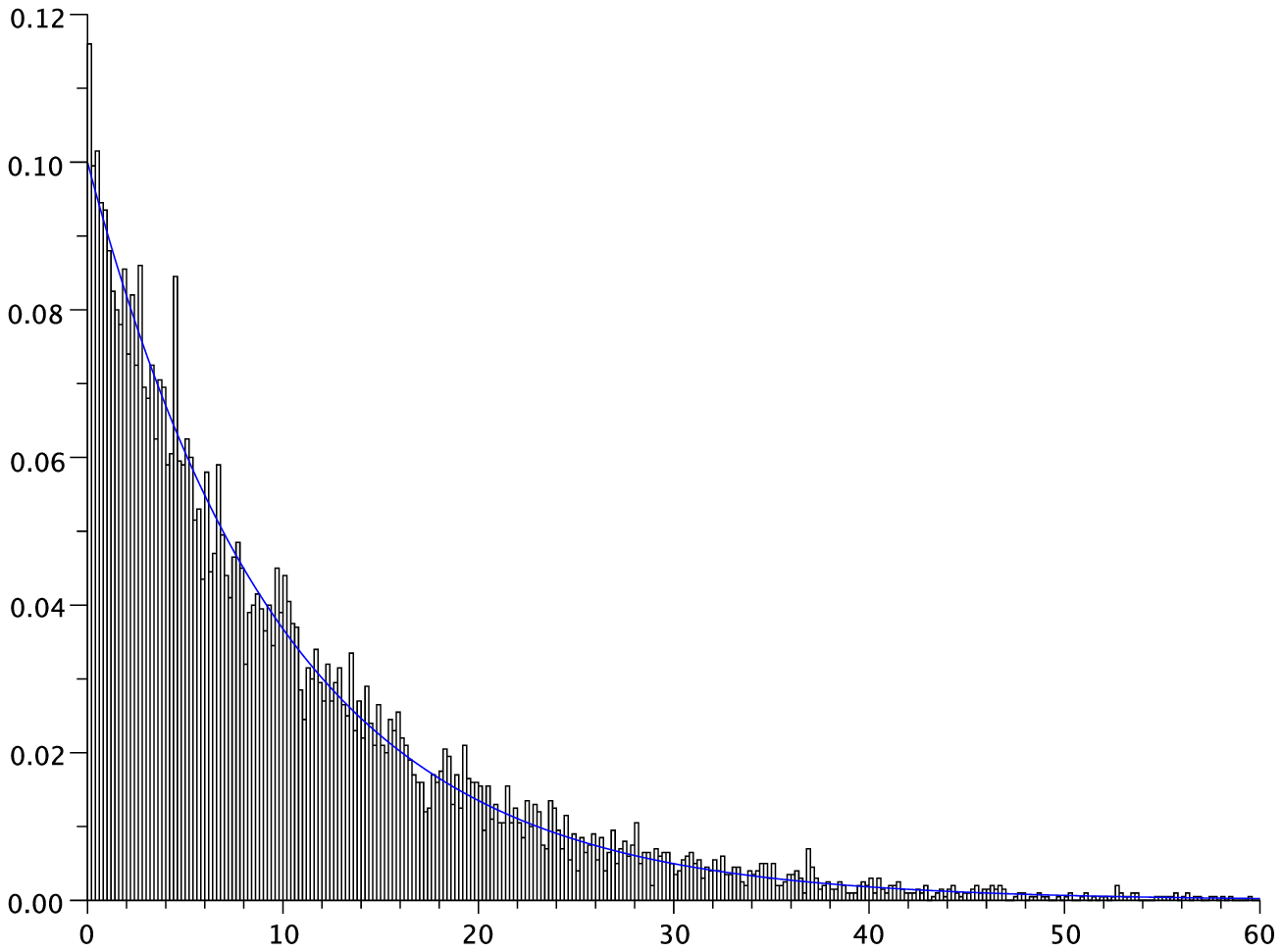}
}
\caption{Distribution of $S_0^\epsilon$, the first exit time from a
  macro-state ($m=20$). Left: large $\epsilon=1$. Right: small
  $\epsilon=10^{-3}$. 
\label{hist_3}}
\end{figure}
%
%

\begin{figure}[h]
\centerline{
\includegraphics[scale=0.4]{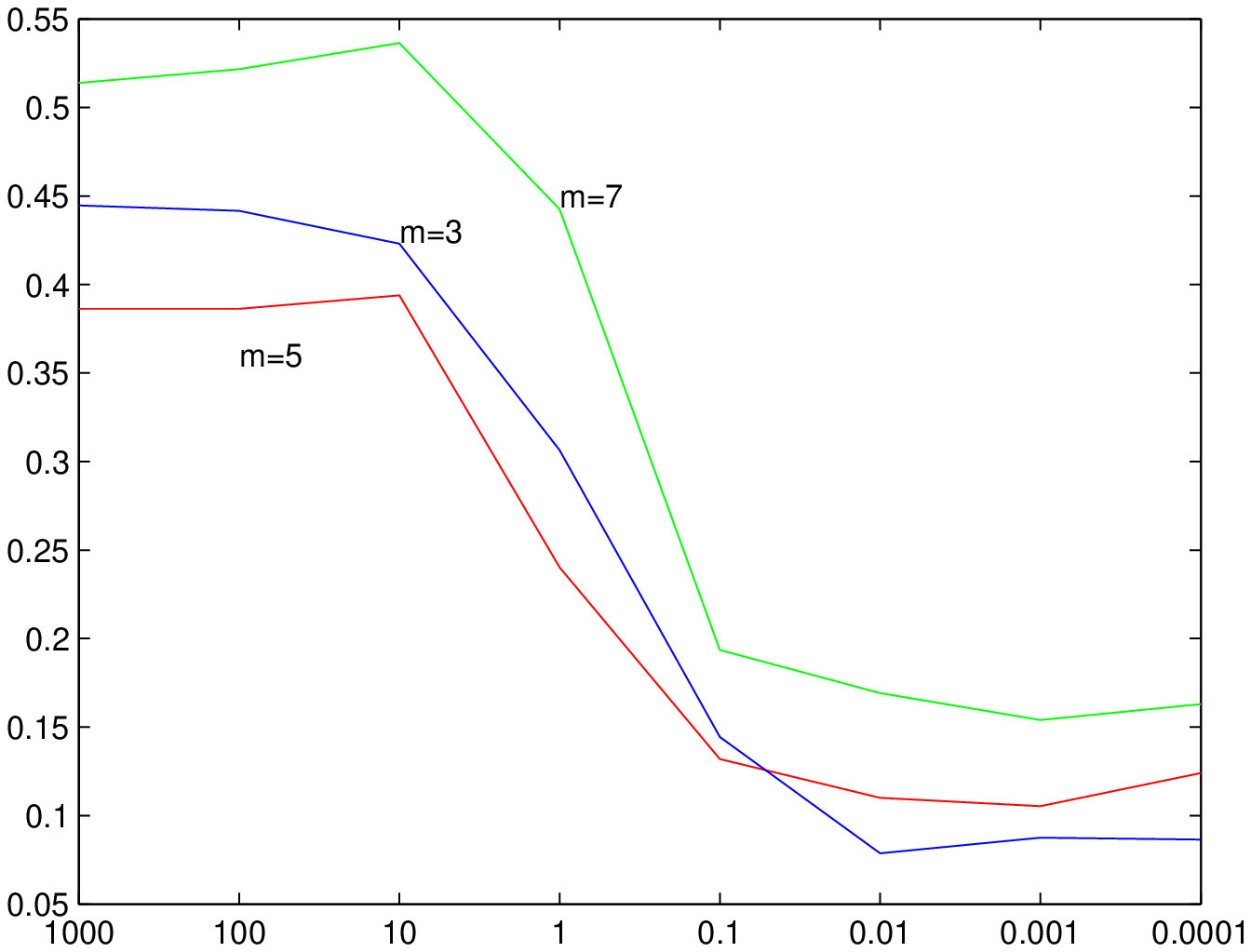}
\includegraphics[scale=0.4]{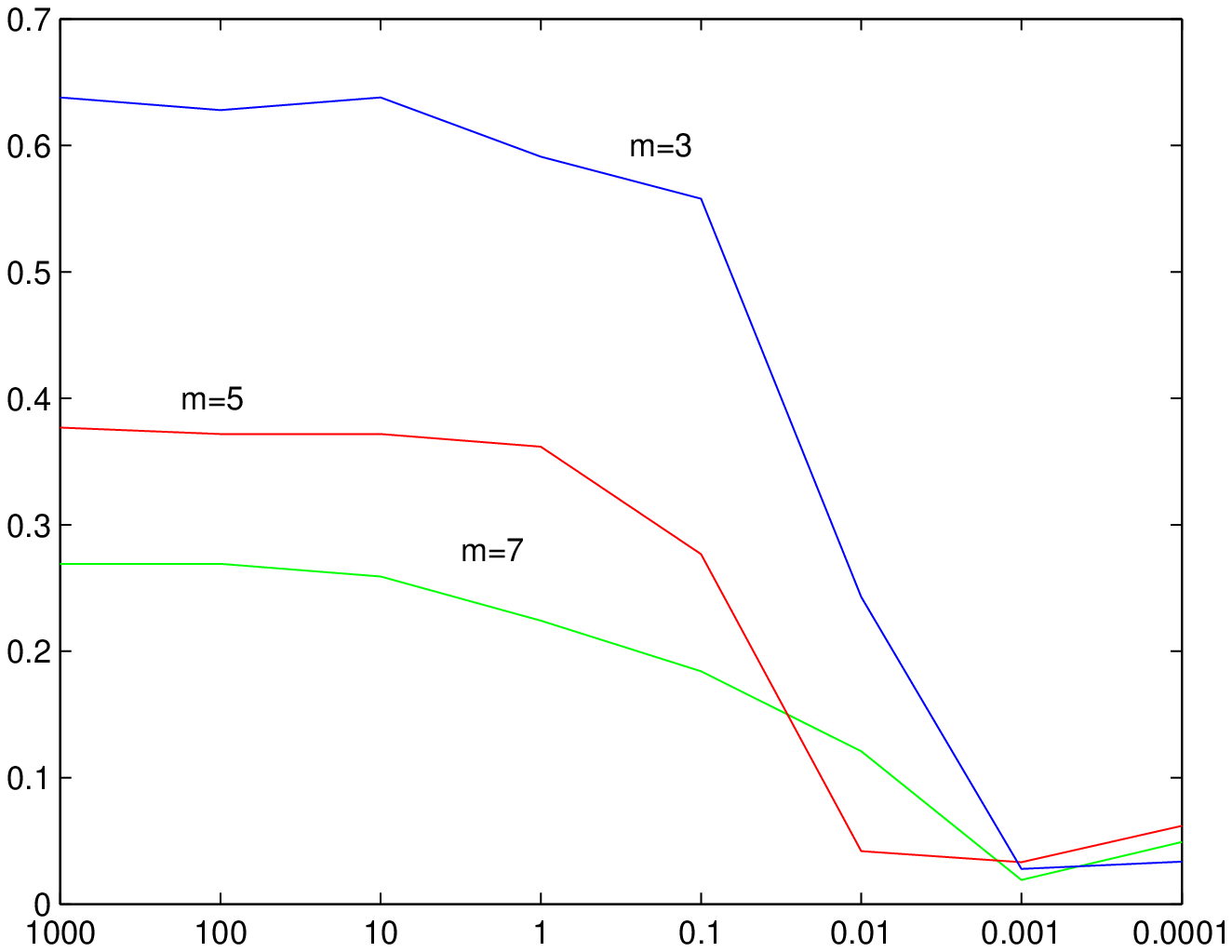}
}
\caption{$L^1$ error~\eqref{eq:err_L1} (left) and 
discrepancy~\eqref{eq:discrepancy} (right)
on the distribution of $S^\epsilon_0$ as a function of $\epsilon$.
\label{faux}}
\end{figure}
%
%

\medskip

We next monitor how the distribution of $S^\epsilon_0$ behaves when we
vary the initial condition. For this test, we work with
$m=5$. Figures~\ref{CI_moy} and~\ref{CI_var} show the empirical
expectation and variance for different initial positions and for
different values of $\epsilon$. We notice that, for an initial condition
which is at the middle of the macro-state, the convergence with respect to
$\epsilon$ is slower than for
the initial conditions which are at the boundaries of a macro-state. 
This difference is due to the diffusion phenomenon which occurs inside
each macro-state as a result of the transition to the nearest neighbours.

\begin{figure}[htbp]
\begin{center}
\psfrag{0.0}{\tiny $0\atop 0$}
\psfrag{1.0}{\tiny $1 \atop 0$}
\psfrag{2.0}{\tiny $2\atop 0$}
\psfrag{3.0}{\tiny $3\atop 0$}
\psfrag{4.0}{\tiny $4 \atop 0$}
\psfrag{0.1}{\tiny $0\atop 1$}
\psfrag{1.1}{\tiny $1\atop 1$}
\psfrag{2.1}{\tiny $2 \atop 1$}
\psfrag{3.1}{\tiny $3\atop 1$}
\psfrag{4.1}{\tiny $4\atop 1$}
\psfrag{moyenne de S0}{}
\psfrag{'moyenne_ene.txt'}{\tiny \hspace{0.7cm} mean}
\psfrag{position initiale, pour epsilon=10}{\tiny \hspace{0.7cm} initial condition}
\psfrag{3}{}
\psfrag{,1,10}{}
\psfrag{-3}{}
\includegraphics[scale=0.75]{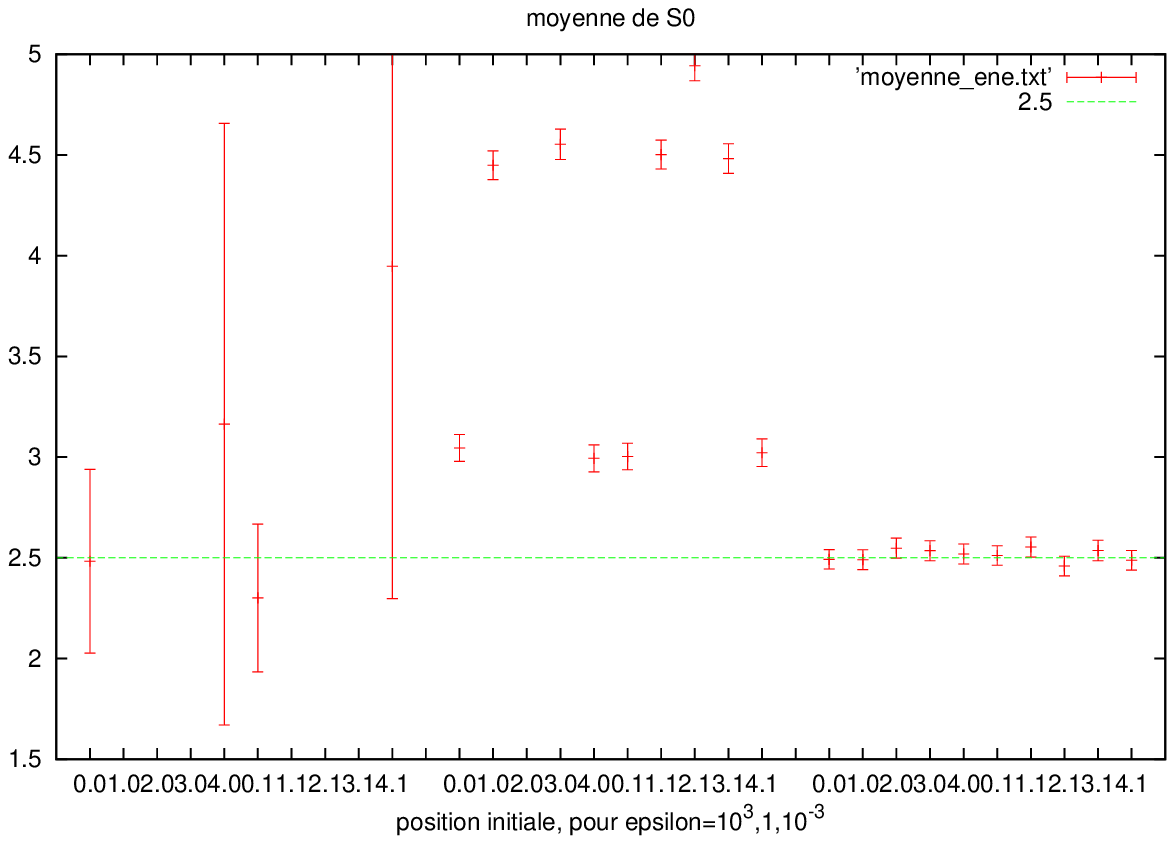}
\end{center}
\caption[pour eviter erreur compilo]{Empirical expectation of $S^\epsilon_0$ for different initial
  conditions and for $\epsilon=10^3$ (left), $\epsilon = 1$ (center) and
  $\epsilon=10^{-3}$ (right). Initial conditions are shown on the x-axis
  in the format $\left( \begin{array}{c} X_0 \\ Z_0 \end{array} \right)
  \in \left( \begin{array}{c} M \\ \{0,1\} \end{array} \right)$, with
  $M=\{0,1,2,3,4\}$.
\label{CI_moy}}
\end{figure}

\begin{figure}[htbp]
\begin{center}
\psfrag{0.0}{\tiny $0\atop 0$}
\psfrag{1.0}{\tiny $1 \atop 0$}
\psfrag{2.0}{\tiny $2\atop 0$}
\psfrag{3.0}{\tiny $3\atop 0$}
\psfrag{4.0}{\tiny $4 \atop 0$}
\psfrag{0.1}{\tiny $0\atop 1$}
\psfrag{1.1}{\tiny $1\atop 1$}
\psfrag{2.1}{\tiny $2 \atop 1$}
\psfrag{3.1}{\tiny $3\atop 1$}
\psfrag{4.1}{\tiny $4\atop 1$}
\psfrag{variance de S0}{}
\psfrag{'variance_ene.txt'}{\tiny \hspace{0.3cm} variance}
\psfrag{position initiale, pour epsilon=10}{\tiny \hspace{0.7cm} initial condition}
\psfrag{3}{}
\psfrag{,1,10}{}
\psfrag{-3}{}
\includegraphics[scale=0.75]{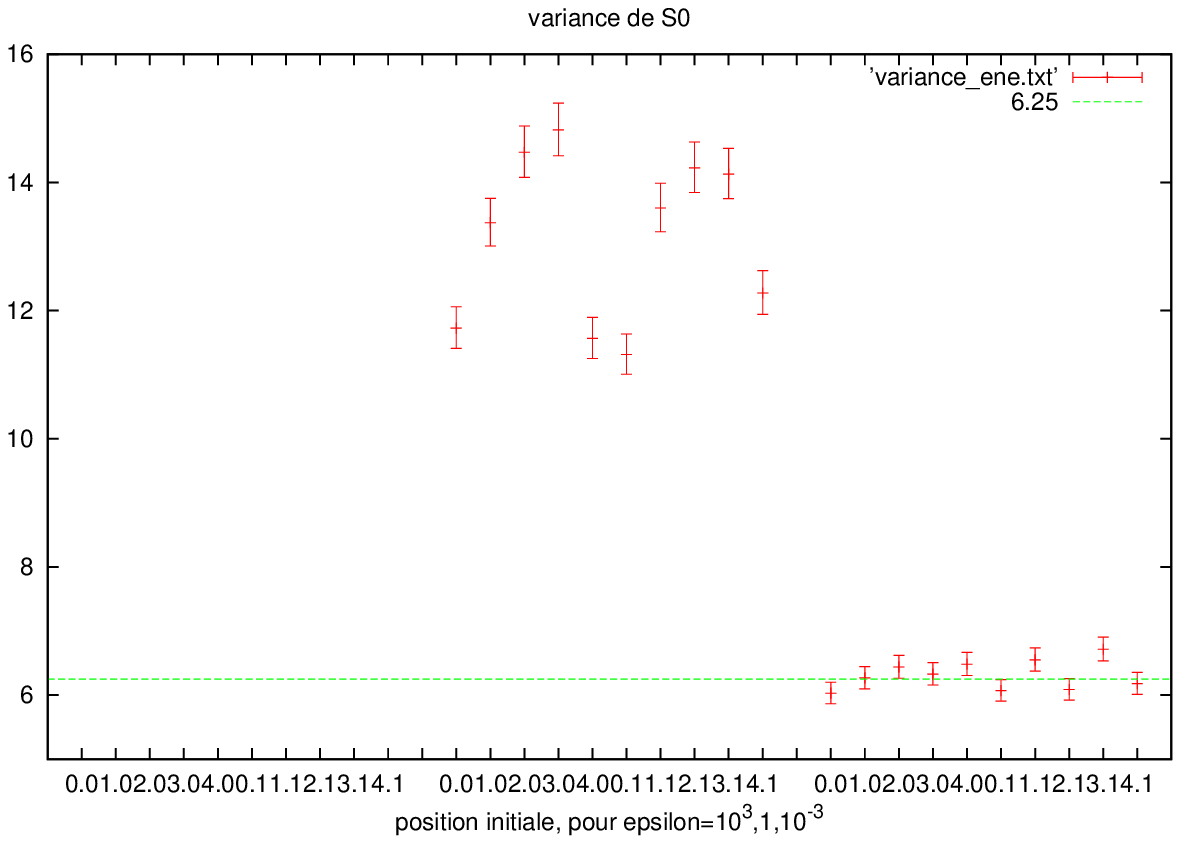}
\end{center}
\caption{Empirical variance of $S^\epsilon_0$ for different initial
  conditions and different values of $\epsilon$, with the same
  convention as on Fig.~\ref{CI_moy} (results for $\epsilon=
  10^3$ do not fit in the chosen y-range). 
\label{CI_var}}
\end{figure}

To better understand the behavior of the system for large values of $m$,
we have simulated our model with $m=20$. 
We show on Figs.~\ref{m20_moy} and \ref{m20_var}
the empirical expectation and variance of $S^\epsilon_0$ for two
different initial conditions, one on the boundary ($Y_0=(0,0)$) and the
other in the middle of the macro-state ($Y_0=(10,1)$). On
Fig.~\ref{disc_20_00}, 
we show the
convergence of the distribution of $S_0^\epsilon$ to its limit for these two initial conditions.


We clearly see that the convergence is slower and the error margins are
larger (for the same number of 
Monte-Carlo realizations) than when we chose smaller values of $m$
(compare for example Fig.~\ref{m20_moy} with
Fig.~\ref{moyenne_carryon} or Fig.~\ref{disc_20_00} with
Fig.~\ref{faux}). The system indeed takes more time in a given
macro-state before reaching its boundary and possibly jumping.
 
\begin{figure}[htbp]
\begin{center}
\psfrag{moyenne de S0}{}
\psfrag{'moyenne_ene.txt'}{\tiny \hspace{0.7cm} mean}
\psfrag{epsilon, pour position initiale 0.0 et 10,1}{\tiny
  \hspace{-2.5cm} epsilon (initial condition $(0,0)$) \hspace{0.7cm} epsilon (initial condition $(10,1)$)}
\includegraphics[scale=0.75]{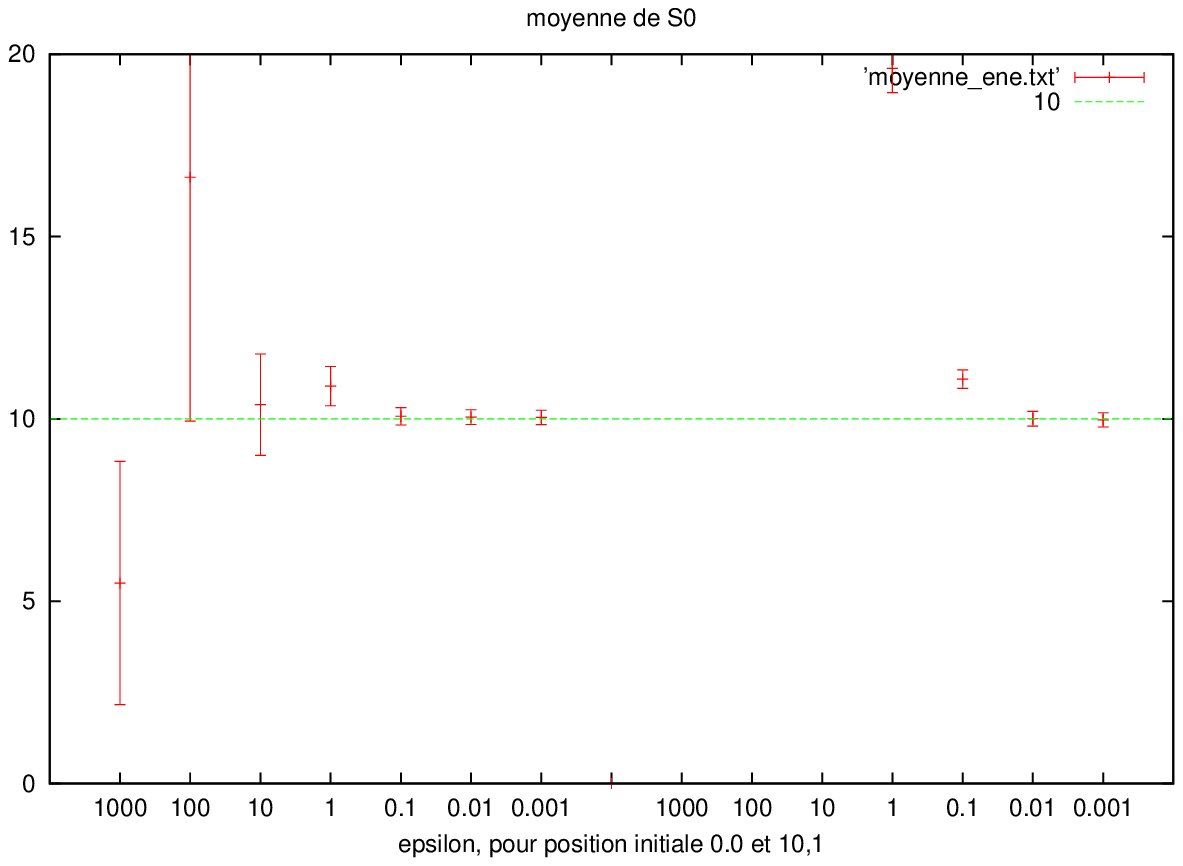}
\end{center}
\caption{Empirical expectation of $S^\epsilon_0$ for $m=20$ and two
  different initial conditions: $Y_0=(0,0)$ (left) and $Y_0=(10,1)$ (right). 
\label{m20_moy}}
\end{figure}

\begin{figure}[htbp]
\begin{center}
\psfrag{variance de S0}{}
\psfrag{'variance_ene.txt'}{\tiny \hspace{0.3cm} variance}
\psfrag{epsilon, pour position initiale 0.0 et 10,1}{\tiny \hspace{-2.5cm} epsilon (initial condition $(0,0)$) \hspace{0.7cm} epsilon (initial condition $(10,1)$)}
\includegraphics[scale=0.75]{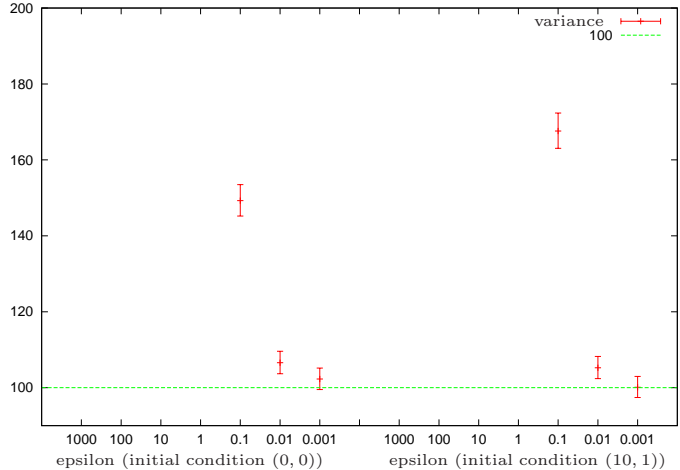}
\end{center}
\caption{Empirical variance of $S^\epsilon_0$ for $m=20$ and two
  different initial conditions: $Y_0=(0,0)$ (left) and $Y_0=(10,1)$ (right). 
\label{m20_var}}
\end{figure}


\begin{figure}[h]
\centerline{
\includegraphics[scale=0.3]{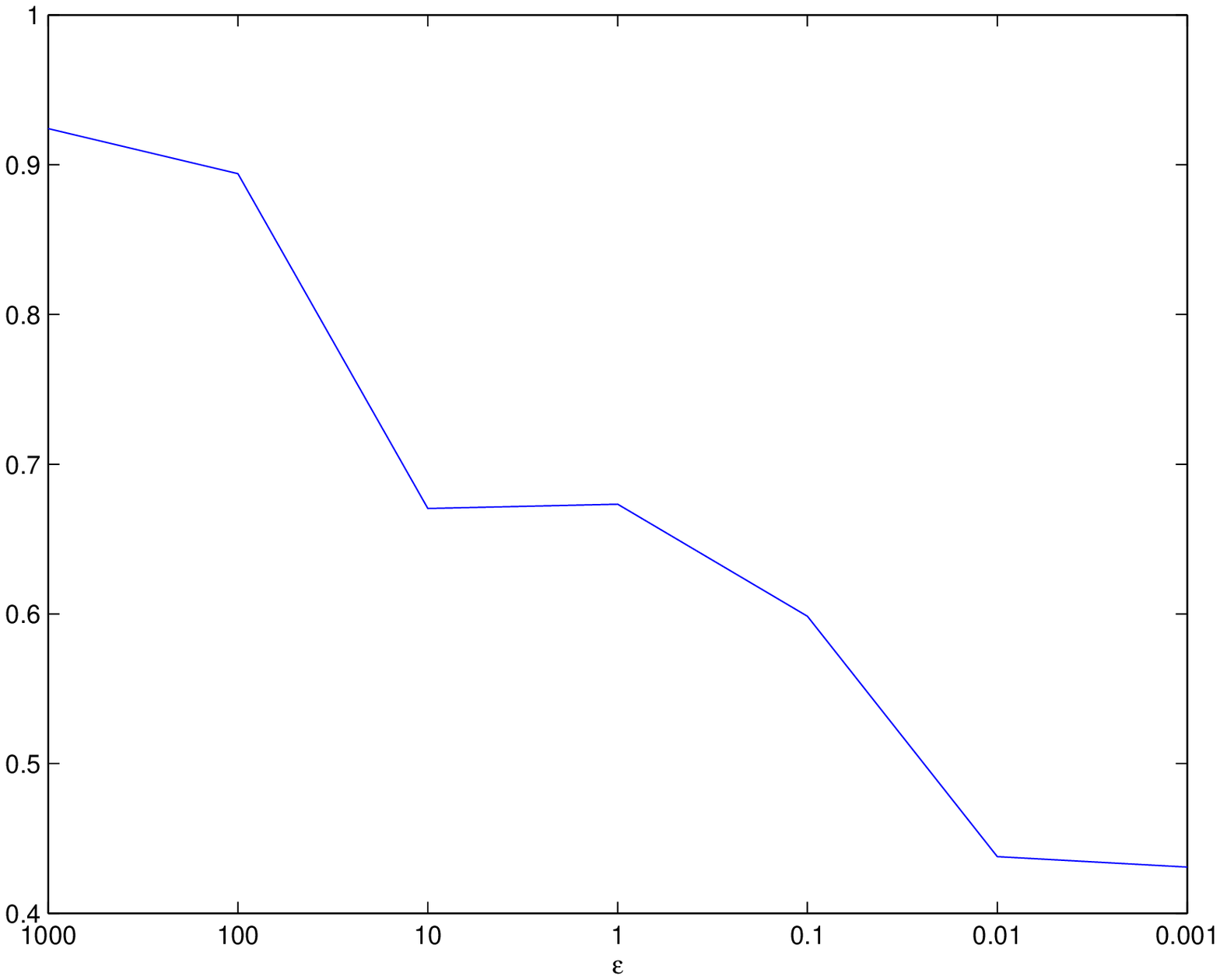}
\includegraphics[scale=0.3]{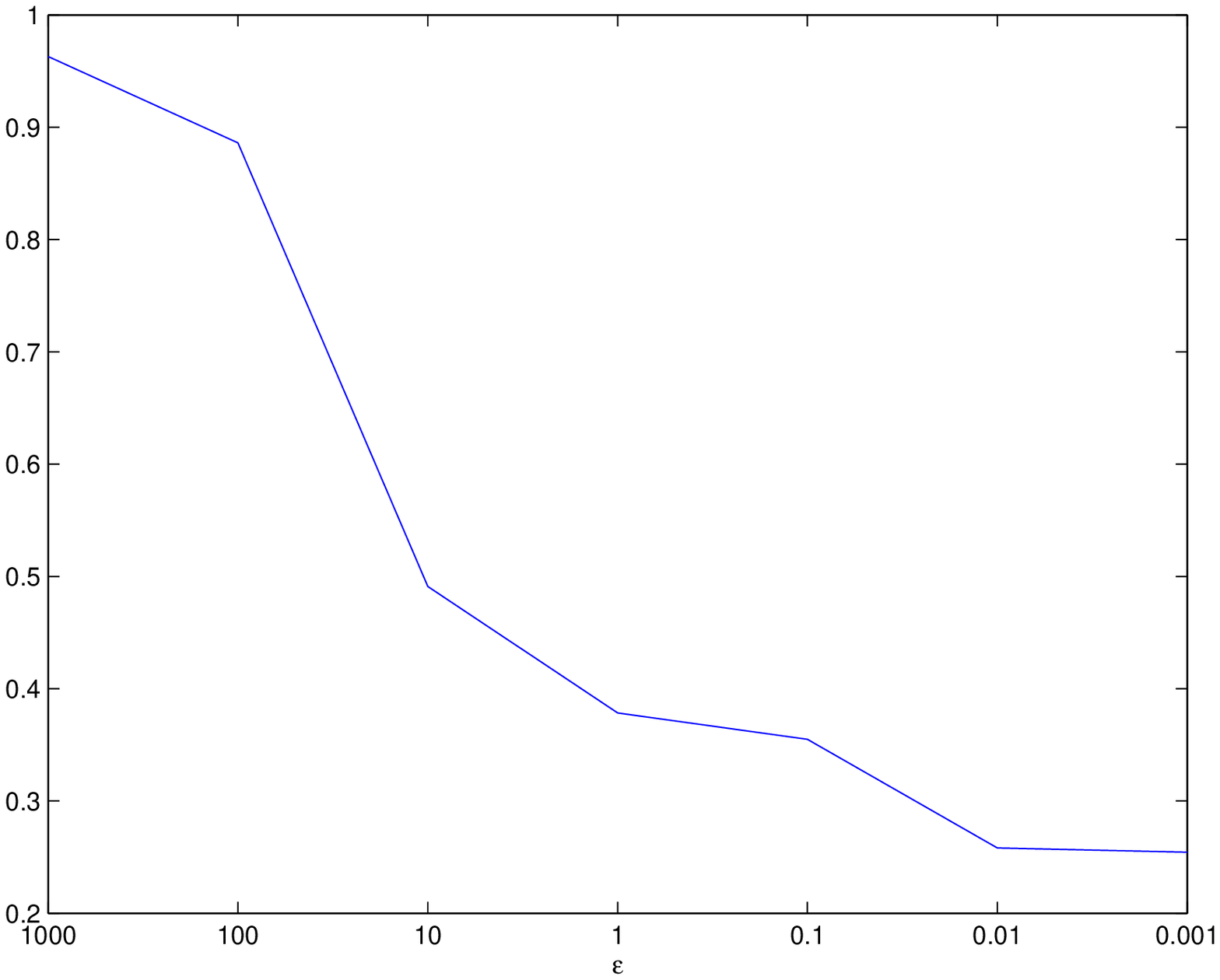}
}
\caption{$L^1$ error~\eqref{eq:err_L1} on the distribution of
  $S^\epsilon_0$ for $m=20$.
Left: initial condition $Y_0=(0,0)$.
Right: initial condition $Y_0=(10,1)$.
\label{disc_20_00}}
\end{figure}
%
%

\medskip

To conclude this numerical illustration, we have monitored the
distribution of $S_1^\epsilon$, the exit time from the second macro
state, and compared it with that of $S_0^\epsilon$, the exit time from
the first macro-state.
We observe (results not shown) that $S^\epsilon_1$ has the
same asymptotic behaviour as $S^\epsilon_0$, a fact which is in agreement with
the theoretical predictions.





\begin{remark}
The parameters of the numerical simulations reported here have been
chosen so that the limit dynamics (at $\epsilon = 0$) is an inaccurate
approximation of the reference dynamics when $\epsilon$ is large (say
$\epsilon \geq 1$). 

There are actually cases when the limit dynamics is an accurate
approximation of the reference dynamics, even if $\epsilon$ is not
small. For example, consider the case where, 
for a given macro-state (say $Z=0$), the
  transitions from each micro-state of this macro-state to any
  micro-state of the other macro-state ($Z=1$) share the same
  frequency. In the case of the symmetric model considered in
  Section~\ref{section_1_2}, the homogeneity condition means that 
$$
\sum_{x'\in M} C\left(x,x'\right) = \text{Cte independent of $x$}.
$$ 
In this case, the macroscopic dynamic is decoupled from the 
microscopic variable, as can be seen from~\eqref{eqfonda}, and of course
does not depend on $\epsilon$.
%
\end{remark}


\section{A particle in a potential energy landscape with infinitely many macro-states}
\label{infinite_detats}

In Section~\ref{modele_deux_etats}, we have studied the dynamics of a
particle in a potential energy with two macro-states. We now turn to the
system composed of a particle in a potential
energy with infinitely many macro-states. We establish a convergence
result on the dynamics of a slow quantity of interest in
Section~\ref{sym}, before turning to numerical illustrations in
Section~\ref{2}. 

\subsection{Presentation of the model and main result}
\label{sym}

As mentioned above, we consider here the dynamics of a particle in a
potential energy with infinitely many macro-states. As in
Section~\ref{section_1_1}, the state of the particle is described by
$\overline{Y_t^\epsilon}
=\left(\overline{X_t^\epsilon},\overline{Z_t^\epsilon}\right)$, which takes
its values in $M \times \mathbb Z$, where again $\overline{X_t^\epsilon}
\in M = \{1,\dots,m\}$ is the label of the micro-state in which the
particle is. The variable $\overline{Z_t^\epsilon}$ is the label of the
macro-state in which the particle is at time $t$, and it now takes any
value of $\ZZ$. 

For simplicity, we assume that the dynamics within each macro-state is
similar. We also 
restrict the transitions from one macro-state to its two neighbors. The
transition from $z$ to $z+1$ may have different properties than the
transition from $z$ to $z-1$ (thus creating a macroscopic drift in the
dynamics). We also assume that the system is 
macroscopically homogeneous, in the sense that properties are translation
invariant with respect to $z$. Under these assumptions, a typical
transition intensity for the process
$\left(\overline{Y_t^\epsilon}\right)_t$ is given by
\begin{equation}
\label{eq:def_Q_eps_many}
\begin{array}{rcl}
\forall z \in \ZZ, \quad
\overline{Q}^\epsilon\left(\left(x,z\right),\left(x',z\right)\right)
&=&
Q\left(x,x'\right),
\\
\forall z \in \ZZ, \quad
\overline{Q}^\epsilon\left(\left(x,z\right),\left(x',z+1\right)\right)
&=&
\epsilon C_r\left(x,x'\right),
\\ 
\forall z \in \ZZ, \quad
\overline{Q}^\epsilon\left(\left(x,z\right),\left(x',z-1\right)\right)
&=&
\epsilon C_l\left(x,x'\right),
\\ 
\forall z \in \ZZ, \quad
\overline{Q}^\epsilon\left(\left(x,z\right),\left(x',z'\right)\right)
&=&
0 \ \mbox{if $z'\neq z$, $z+1$ or $z-1$.}
\end{array}
\end{equation}

We again assume that the matrix $Q$ is irreducible (see~\eqref{eq:hyp})
and introduce its 
unique invariant measure $\pi$. The average of the jump frequency according
to the invariant measure reads
\begin{equation}
\label{eq:lambda_inf}
\lambda_l = \sum_{x,x'\in M} C_l\left(x,x'\right)\pi\left(x\right),
\quad
\lambda_r = \sum_{x,x'\in M} C_r\left(x,x'\right)\pi\left(x\right).
\end{equation}
We introduce the generator $L$ defined by: for any bounded function
$\phi$ on $\ZZ$, 
\begin{equation}
\label{eq:def_L_many}
L\phi(z)=\lambda_l \phi(z-1) + \lambda_r \phi(z+1) - \left(\lambda_r+\lambda_l\right) \phi(z),
\end{equation}
which is the generator of a jump process $(Z_t)_{t\geq 0}$ on $\ZZ$,
with jumps at times
defined by a Poisson process of parameter $\lambda_l+\lambda_r$. When
the process jumps, it jumps to the right (resp. to the left) with
probability $\dps \frac{\lambda_r}{\lambda_r+\lambda_l}$
(resp. $\dps \frac{\lambda_l}{\lambda_r+\lambda_l}$ ). 


The main result of this section is the following:
\begin{theorem}
\label{th2} 
Assume that the matrix $Q$ is irreducible. 
Consider the rescaled-in-time process
$Y^\epsilon_t=(X^\epsilon_t,Z^\epsilon_t) 
= \overline{Y}^\epsilon_{t/\epsilon}$ with initial condition 
$Y_0=(X_0,Z_0)$ independent of $\epsilon$.
We denote by $\mathcal P^\epsilon$ the distribution of the process
$\left(Z_t^\epsilon\right)_t$ and by $\mathcal P$ the distribution of the
process starting from the initial condition $Z_0$ and having as
generator the operator $L$ defined by~\eqref{eq:def_L_many}. Then 
$$
\mathcal P^{\epsilon} \Rightarrow \mathcal P \mbox{ as $\epsilon$ goes
  to $0$}.
$$
\end{theorem}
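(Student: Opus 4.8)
The plan is to follow closely the strategy used to prove Theorem~\ref{th1}, replacing the two-point indicator by general bounded test functions on $\ZZ$ and checking that the limit solves the martingale problem associated with the generator $L$ of~\eqref{eq:def_L_many}. Write $\overline{C_r}(x) = \sum_{x'\in M} C_r(x,x')$ and $\overline{C_l}(x) = \sum_{x'\in M} C_l(x,x')$. For a bounded $\psi:\ZZ\to\RR$, the generator of $Y^\epsilon$ annihilates the internal part (since $\psi$ does not depend on $x$), so
$$
L^\epsilon\psi(x,z) = \overline{C_r}(x)\bra{\psi(z+1)-\psi(z)} + \overline{C_l}(x)\bra{\psi(z-1)-\psi(z)},
$$
and, by Proposition~\ref{lemma1},
$$
M^{\psi,\epsilon}_t = \psi(Z^\epsilon_t) - \psi(Z_0) - \int_0^t L^\epsilon\psi(Y^\epsilon_s)\,ds
$$
is an $\F^\epsilon_t$-martingale. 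Taking $\psi=\Id$ gives the martingale $M^\epsilon_t = Z^\epsilon_t - Z_0 - \int_0^t\bra{\overline{C_r}-\overline{C_l}}(X^\epsilon_s)\,ds$, whose quadratic variation computes to $\int_0^t\bra{\overline{C_r}+\overline{C_l}}(X^\epsilon_s)\,ds$; both the drift integrand and this quadratic variation are bounded by $m\bra{\norm{C_r}_\infty+\norm{C_l}_\infty}$, uniformly in $\epsilon$.

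The first and, I expect, main task is tightness of $\bra{\P^\epsilon}$. In contrast with Theorem~\ref{th1}, the state space $\ZZ$ is unbounded, so the compact containment condition~\eqref{3.21i} is no longer automatic. I would obtain it by noting that the number of macro-state jumps of $Y^\epsilon$ on $[0,N]$ is dominated by a Poisson variable of parameter $N\,m\bra{\norm{C_r}_\infty+\norm{C_l}_\infty}$ (the macro-jump rates $\overline{C_r}(x),\overline{C_l}(x)$ are uniformly bounded and unaffected by the time rescaling, which only multiplies the internal rate $Q$ by $\epsilon^{-1}$), which bounds $\sup_{t\le N}\av{Z^\epsilon_t-Z_0}$ stochastically and uniformly in $\epsilon$, yielding~\eqref{3.21i}. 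The oscillation condition~\eqref{4.4} then follows exactly as in Step~1 of the proof of Theorem~\ref{th1}: for stopping times $S\le T\le S+\theta$, the drift part of $Z^\epsilon_T-Z^\epsilon_S$ is bounded by $\theta\,m\bra{\norm{C_r}_\infty+\norm{C_l}_\infty}$, and the martingale increment is controlled through Tchebychev together with the uniformly Lipschitz quadratic variation above. Theorem~\ref{critere} and Prohorov's theorem give relative compactness of $\bra{\P^\epsilon}$, while C-tightness of $\langle M^\epsilon\rangle$ (being uniformly Lipschitz) gives relative compactness of the martingale family as in Step~2, so along a subsequence $Z^{\epsilon'}\Rightarrow Z$ and $M^{\psi,\epsilon'}\Rightarrow M^\psi$ with $M^\psi$ a martingale.

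To identify $Z$, fix a bounded $\psi$ and decompose, using~\eqref{eq:lambda_inf},
$$
L^\epsilon\psi(x,z) = L\psi(z) + F_\psi(x,z), \quad F_\psi(x,z) = g_r(x)\bra{\psi(z+1)-\psi(z)} + g_l(x)\bra{\psi(z-1)-\psi(z)},
$$
where $g_r=\overline{C_r}-\lambda_r$ and $g_l=\overline{C_l}-\lambda_l$ have zero $\pi$-average. The fluctuation term $\int_0^t F_\psi(Y^\epsilon_s)\,ds$ tends to $0$ in $L^2$ by the argument of Proposition~\ref{prop1}: since, for each fixed $z$, $F_\psi(\cdot,z)$ is $\pi$-centred and $Q$ is irreducible, one solves the cell problem $\overline{L}^0 u = F_\psi$ fibre by fibre in $z$ (Lemma~\ref{lemme1}), and because $\psi$ is bounded the differences $\psi(z\pm1)-\psi(z)$ are bounded uniformly in $z$, so the solution $u$ is bounded on $M\times\ZZ$; the martingale estimate of Proposition~\ref{prop1} then applies verbatim. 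For the remaining term $\int_0^t L\psi(Z^\epsilon_s)\,ds$, I extend $z\mapsto L\psi(z)$ from $\ZZ$ to a Lipschitz function $\widetilde{L\psi}$ on $\RR$ by piecewise-linear interpolation (legitimate since $L\psi$ is bounded), exactly as the affine extension $\widetilde h$ was used in the non-symmetric case of Theorem~\ref{th1}; since $Z^\epsilon$ is integer-valued the integral is unchanged, and Lemma~\ref{lemme_continuite} lets me pass to the limit to obtain $\int_0^t L\psi(Z_s)\,ds$.

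Passing to the limit as in Step~3 of the proof of Theorem~\ref{th1} (establishing joint relative compactness of the three terms and adding up the weak limits), I conclude that for every bounded $\psi:\ZZ\to\RR$ the process $\psi(Z_t) - \psi(Z_0) - \int_0^t L\psi(Z_s)\,ds$ is a martingale, i.e. $Z$ solves the martingale problem for $L$. Because the macro-jump rates are bounded the associated jump process does not explode, so this martingale problem is well posed and has a unique solution (Stroock--Varadhan, Lemma~\ref{appendixG}); hence $Z$ is the random walk of generator~\eqref{eq:def_L_many} started at $Z_0$. The limit being independent of the extracted subsequence, the whole family converges, $\P^\epsilon\Rightarrow\P$. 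The step I expect to require the most care is the tightness on the unbounded lattice $\ZZ$---specifically the compact containment estimate---which was trivial in Theorem~\ref{th1} and is the genuinely new ingredient here.
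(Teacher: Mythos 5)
Your proposal is correct and follows essentially the same route as the paper: decompose $L^\epsilon\psi$ into $L\psi$ plus a $\pi$-centred fluctuation, kill the fluctuation by solving the cell problem of Lemma~\ref{lemme1} fibre-by-fibre in $z$ (with the boundedness of $u$ you rightly point out), pass to the limit in the drift via Lipschitz interpolation and Lemma~\ref{lemme_continuite}, and identify the limit through the martingale problem of Lemma~\ref{appendixG}. The paper leaves the compact-containment condition~\eqref{3.21i} implicit, and your Poisson-domination argument is a sensible way to supply that detail; just note that Proposition~\ref{lemma1} is stated for bounded $F$, so the martingale you extract from $\psi=\Id$ needs the same domination to justify its integrability.
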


The proof of this result follows the same steps as that of
Theorem~\ref{th1}, up to the fact that the process $Z^\epsilon$ is no
longer bounded. To circumvent this difficulty, we need to work with
an {\em arbitrary} bounded function of $Z^\epsilon$, in contrast to the proof
of Theorem~\ref{th1}, where it is sufficient to directly work with
$Z^\epsilon$. 

We briefly sketch the proof of Theorem~\ref{th2}. 
The generator $L^\epsilon$ of $Y^\epsilon_t$ reads, for a bounded function $\phi$,
\begin{multline*}
L^\epsilon \phi\left(x,z\right)
= 
\sum_{x'\in M} \epsilon^{-1} Q\left(x,x'\right)
\left(\phi\left(x',z\right)-\phi\left(x,z\right)\right)
\\
+ 
\sum_{x'\in M} C_l\left(x,x'\right) \left( \phi\left(x',z-1\right) 
- \phi\left(x,z\right)\right)
\\
+ 
\sum_{x'\in M} C_r\left(x,x'\right) \left( \phi\left(x',z+1\right) - \phi\left(x,z\right)\right).
\end{multline*}
For a function $\phi\left(x,z\right)=F(z)$ which only depends on the
macroscopic variable (where $F$ is a bounded function on $\mathbb Z$),
we have 
\begin{multline*}
(L^\epsilon F)\left(x,z\right)
= 
\sum_{x'\in M} C_l\left(x,x'\right)
\left(F\left(z-1\right)-F\left(z\right)\right)
\\
+
\sum_{x'\in M} C_r\left(x,x'\right) \left( F\left(z+1\right) -
  F\left(z\right)\right).
\end{multline*}
Using Proposition~\ref{lemma1}, we know that the process 
\begin{equation}
\label{eqfonda2_pre} 
M^\epsilon_t
=
F\left(Z^\epsilon_t\right)- F\left(Z_0\right) - 
\int^t_0 (L^\epsilon F)\left(X^\epsilon_s,Z^\epsilon_s\right) \, ds
\end{equation}
is a $\mathcal F_t^\epsilon$-martingale. 
We now introduce
\begin{multline}
\label{G}
G(F)\left(x,z\right)
= 
\left(F\left(z-1\right)-F\left(z\right)\right)
\sum_{x'\in M} \left( C_l\left(x,x'\right) - \lambda_l \right)
\\
+
\left( F\left(z+1\right) - F\left(z\right)\right)
\sum_{x'\in M} \left( C_r\left(x,x'\right) - \lambda_r \right),
\end{multline}
so that
$$
(L^\epsilon F)\left(x,z\right)
=
G(F)\left(x,z\right)
+
LF(z)
$$
where $L$ is defined by~\eqref{eq:def_L_many}. We then
recast~\eqref{eqfonda2_pre} as
\begin{equation}
F\left(Z^\epsilon_t\right)
=
F\left(Z_0\right)
+
\int^t_0 G\left(F\right)\left(Y_s^{\epsilon}\right)ds 
+  
\int^t_0 LF\left(Z_s^{\epsilon}\right)ds + M^\epsilon_t.
\label{eqfonda2}
\end{equation}  
We are now left with passing to the limit $\epsilon \to 0$
in~\eqref{eqfonda2}.

\medskip

Consider first the second term of the right-hand
side of~\eqref{eqfonda2}. We have the following result (compare with
Proposition~\ref{prop1}): 

\begin{proposition}
For any bounded function $F$ defined on $\ZZ$ and any $t\geq 0$, under
the assumptions of Theorem~\ref{th2}, we have 
$$ 
\EE \left[ \left( 
\int_0^t G\left(F\right)\left(Y_s^\epsilon\right) \, ds 
\right)^2 \right]
\longrightarrow 0 \quad \text{as $\epsilon \to 0$}, 
$$ 
where $G(F)$ is defined by~\eqref{G}. 
\end{proposition}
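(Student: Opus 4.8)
The plan is to mimic the proof of Proposition~\ref{prop1}, which established the analogous $L^2$ convergence for the two-state model, the key insight being that a function averaging to zero against the invariant measure can be written in the range of the fast generator. First I would fix a bounded function $F$ on $\ZZ$, and observe that the map
$$
\psi(x,z) = G(F)(x,z) = \left(F(z-1)-F(z)\right)\sum_{x'\in M}\left(C_l(x,x')-\lambda_l\right) + \left(F(z+1)-F(z)\right)\sum_{x'\in M}\left(C_r(x,x')-\lambda_r\right)
$$
is, for each fixed $z$, a function of $x$ that has vanishing average against $\pi$. Indeed, by the definitions~\eqref{eq:lambda_inf} of $\lambda_l$ and $\lambda_r$, we have $\sum_{x\in M}\pi(x)\sum_{x'\in M}\left(C_l(x,x')-\lambda_l\right)=0$ and likewise for $C_r$, so that summing $\psi(\cdot,z)$ against $\pi$ gives zero for every $z$.

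Next I would solve a cell problem for the fast generator. Let $\overline{L}^0$ denote the generator of the internal dynamics, $\overline{L}^0 u(x,z)=\sum_{x'\in M} Q(x,x')\left(u(x',z)-u(x,z)\right)$, which acts only on the microscopic variable $x$ at each fixed $z$. Since $Q$ is irreducible, Lemma~\ref{lemme1} (applied slice-by-slice in $z$, exactly as in the proof of Proposition~\ref{prop1} where the left null-vectors of $\overline{L}^0$ are the multiples of $\pi$) guarantees the existence of a function $u$ with $\overline{L}^0 u = G(F)$. The one subtlety here, absent in the two-state case, is that $z$ ranges over all of $\ZZ$; however, because the paper assumes the internal dynamics are identical in every macro-state and $F$ is bounded, $G(F)$ is bounded uniformly in $z$ with a solution $u$ that can be chosen bounded uniformly in $z$ as well (the cell problem decouples across $z$ and the solution operator has a $z$-independent norm), so $\|u\|_\infty$, $\|u^2\|_\infty$ are finite.

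With $u$ in hand I would reproduce the martingale decomposition of Proposition~\ref{prop1} verbatim. Writing $\epsilon L^\epsilon u = \overline{L}^0 u + \epsilon L^{C} u$, where $L^{C}$ collects the coupling terms $C_l, C_r$, I obtain
$$
\int_0^t G(F)\left(Y_s^\epsilon\right)\,ds = \epsilon\int_0^t L^\epsilon u\left(Y_s^\epsilon\right)\,ds - \epsilon\int_0^t L^{C} u\left(Y_s^\epsilon\right)\,ds.
$$
Introducing $N_t^u = u(Y_t^\epsilon)-u(Y_0^\epsilon)-\int_0^t L^\epsilon u(Y_s^\epsilon)\,ds$, which by Proposition~\ref{lemma1} is a martingale whose quadratic variation is controlled by $\EE[(N_t^u)^2]\leq A+\epsilon^{-1}B$ for constants depending on $\|u\|_\infty$, $\|Q\|_\infty$, $\|C_l\|_\infty$, $\|C_r\|_\infty$, the first term on the right satisfies $\EE[(\epsilon\int_0^t L^\epsilon u)^2]\leq 2\epsilon^2(A'+\epsilon^{-1}B)=O(\epsilon)$, while the second term is trivially $O(\epsilon^2)$ since $L^{C}u$ is bounded. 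Adding the two bounds gives the claimed convergence to $0$.

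The main obstacle is the uniform-in-$z$ control of the cell-problem solution $u$: in Proposition~\ref{prop1} the state space $E$ was finite, so boundedness of $u$ was automatic, whereas here $z\in\ZZ$ makes the state space infinite. The resolution rests on the translation-invariance assumption, which makes $\overline{L}^0$ act identically on each $z$-slice, so that the $z$-dependence of $G(F)$ enters only through the bounded prefactors $F(z\pm 1)-F(z)$; choosing $u(x,z)=\left(F(z-1)-F(z)\right)v_l(x)+\left(F(z+1)-F(z)\right)v_r(x)$ with $\overline{L}^0 v_l(x)=\sum_{x'}\left(C_l(x,x')-\lambda_l\right)$ and similarly for $v_r$ reduces everything to two $z$-independent finite-dimensional cell problems, and the resulting $u$ inherits the bound $\|u\|_\infty\leq 2\|F\|_\infty\left(\|v_l\|_\infty+\|v_r\|_\infty\right)$. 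Once this bound is secured, the remainder of the argument is identical to Proposition~\ref{prop1}.
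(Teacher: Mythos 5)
Your proof is correct and follows essentially the same route as the paper: show that $G(F)(\cdot,z)$ averages to zero against $\pi$ for each fixed $z$, solve the cell problem $\overline{L}^0 u = G(F)$ slice-by-slice via Lemma~\ref{lemme1}, and then repeat the martingale decomposition of Proposition~\ref{prop1}. Your explicit ansatz $u(x,z)=\left(F(z-1)-F(z)\right)v_l(x)+\left(F(z+1)-F(z)\right)v_r(x)$, which secures the uniform-in-$z$ bound on the corrector, is a point the paper leaves implicit and is a welcome clarification.
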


\begin{proof}
The proof follows the same steps as that of Proposition~\ref{prop1}. Fix
$z \in \mathbb Z$ and consider the function 
$x \in M \mapsto G_z(x)=G(F)(x,z)$, that we identify with a vector in
$\RR^m$, denoted $G_z$. Using~\eqref{eq:lambda_inf}, we observe that
$\pi^T G_z=0$. We then deduce that, for any $y \in \RR^m$ such that $y^T
(Q-\Delta) = 0$ (where $\Delta$ has been defined in
Remark~\ref{rem:convention2}), we have $y^T G_z = 0$. We then infer from 
Lemma~\ref{lemme1} that there exists $u_z \in \RR^m$ such that $(Q -
\Delta) \, u_z=G_z$. Introducing the function $u(x,z)=u_z(x)$, we easily
check that $\overline{L}^0u=G(F)$. The rest of the proof is identical to that of
Proposition~\ref{prop1}.
\end{proof}

For the other terms of~\eqref{eqfonda2}, the proof follows exactly the
same steps as in the proof of Theorem~\ref{th1}. We hence obtain that
the weak limit $Z$ of $\left(Z^\epsilon\right)$ satisfies that, for
every bounded function $F$ on $\ZZ$, there exists a martingale $M^F$
such that 
\begin{equation} 
\label{eqfonda3}
F\left(Z_t\right) 
= 
F\left(Z_0\right) + \int^t_0 LF\left(Z_s\right)ds + M^F_t.
\end{equation}
Using Lemma~\ref{appendixG}, we conclude that $Z$ is a jump process of
generator $L$ defined by~\eqref{eq:def_L_many}.

\begin{remark}
We refer to~\cite{these_salma} for the study of the limit process
introduced in Theorem~\ref{th2}, after a rescaling both {\em in time
and space}. We show there that it converges to a Brownian motion (up to a
multiplicative constant).
\end{remark}

\subsection{Numerical illustration}
\label{2}

We have simulated the model described in Section~\ref{sym}, with the choices
$$
Q = \left(
\begin{array}{ccccc}
0 & q & & &  \\
q & 0 & q & & \\
\ddots & \ddots & \ddots & \\
 & &q &0 & q \\
 & & & q & 0 \\ 
\end{array}
\right),
$$
$$
C_l = \left(
\begin{array}{ccccc}
0 & \cdots&  &0  &  c_l  \\
0 & \cdots &  &  & 0\\
\vdots & & & & \vdots \\
0 & 0 & \cdots&  & 0 \\ 
\end{array}
\right)
\quad \text{and} \quad
C_r = \left(
\begin{array}{ccccc}
0 & \cdots&  &0  &  0  \\
0 & \cdots &  &  & 0\\
\vdots & & & & \vdots \\
c_r & 0 & \cdots&  & 0 \\ 
\end{array}
\right)
$$
with $q=1$, $c_r=2$, $c_l=1$, $m=5$ and the initial condition
$Y_0=(0,0)$ (similar results are obtained for other initial
conditions). The parameters $\lambda_r$ and $\lambda_l$ of the 
macroscopic evolution are
$$
\lambda_l= \frac{c_l}{m}=\frac{1}{5} 
\quad \text{and} \quad
\lambda_r=\frac{c_r}{m}=\frac{2}{5}.
$$
We first monitor the convergence of the distribution of $S^\epsilon_0$,
the exit time from the first well. On
Fig.~\ref{Z_moyenne_variance_S_0}, we show its empirical expectation and
variance. We see that they converge to their
asymptotic values as $\epsilon$ goes
to zero. This convergence is confirmed by the histogram representation
(on Fig.~\ref{hist_Z_S_0}), where we see a good agreement between the
discrete curve and the asymptotic curve for sufficiently small values of
$\epsilon$. Likewise, the $L^1$ error, also shown on Fig.~\ref{hist_Z_S_0},
indeed converges to zero. 

\begin{figure}[h]
\begin{center}
\psfrag{moyenne de S0}{}
\psfrag{'moyenne_ene.txt'}{\tiny \hspace{0.3cm} mean}
\psfrag{variance de S0}{}
\psfrag{'variance_ene.txt'}{\tiny variance}
\includegraphics[scale=0.49]{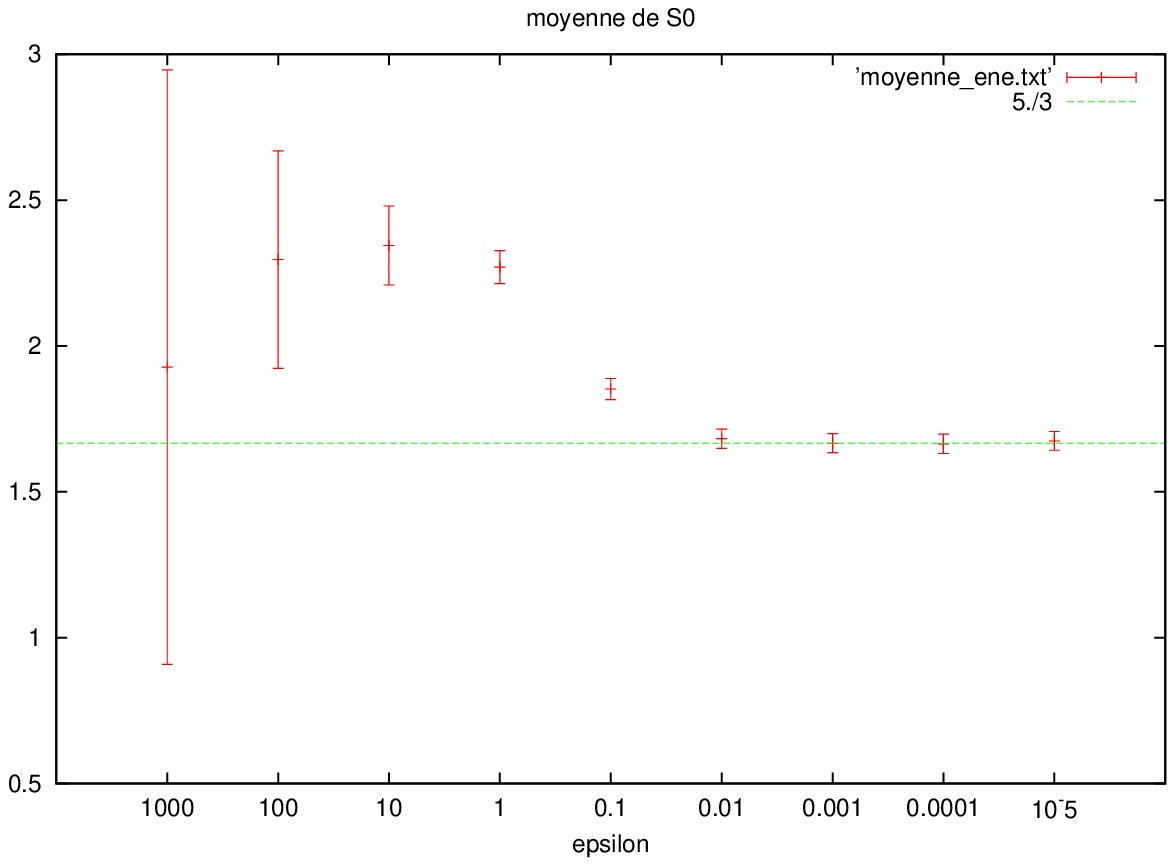}
\includegraphics[scale=0.49]{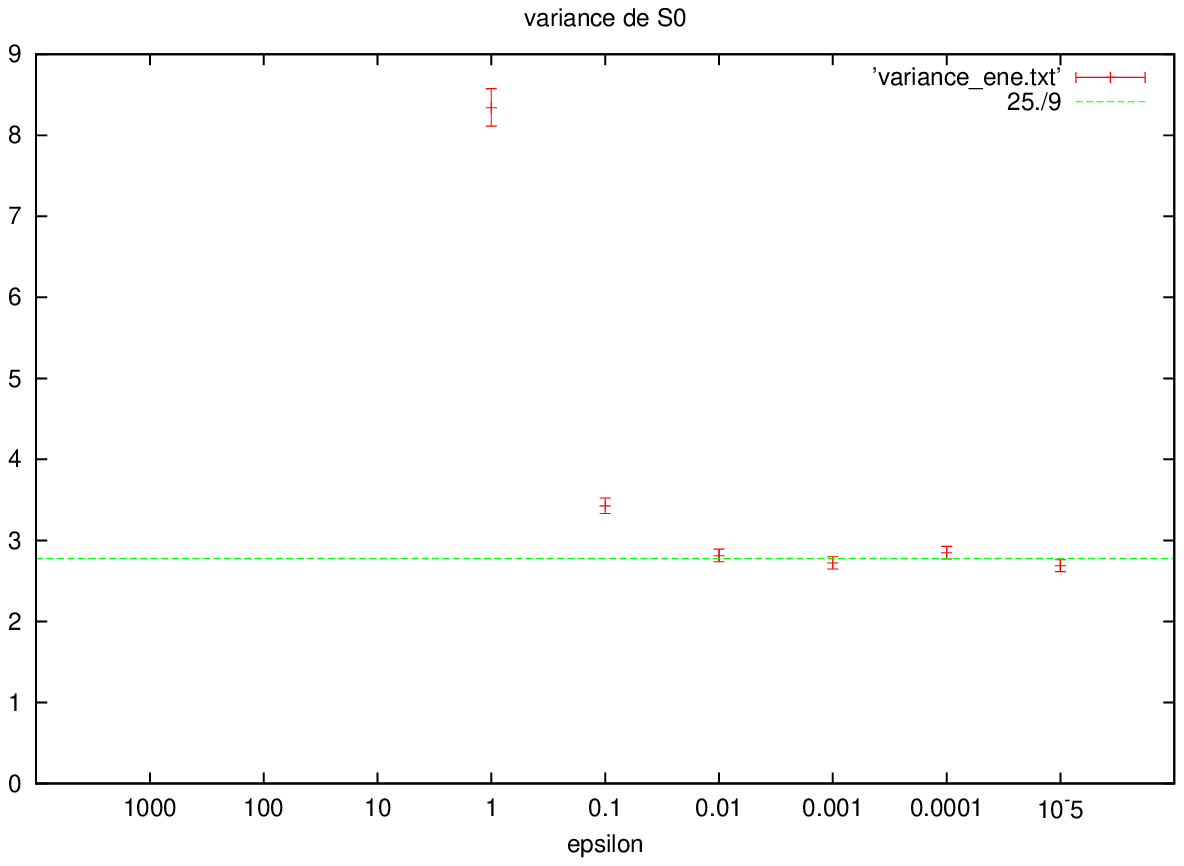}
\end{center}
\caption{Empirical expectation (left) and variance (right) of
  $S^\epsilon_0$ as a function of $\epsilon$.
\label{Z_moyenne_variance_S_0}}
\end{figure}

%


%
\begin{figure}[h!]
\centerline{ 
\includegraphics[scale=0.35]{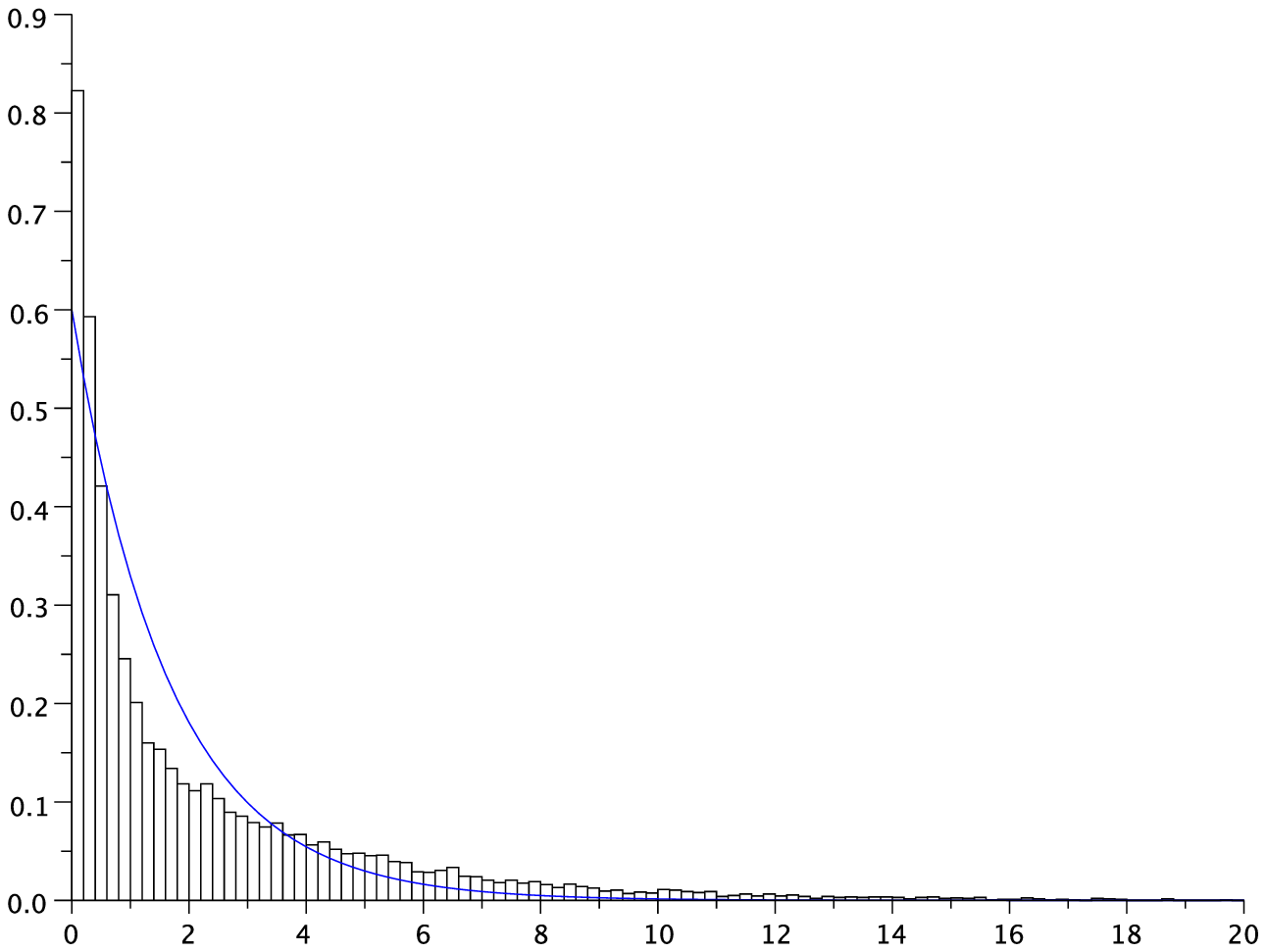}
\includegraphics[scale=0.35]{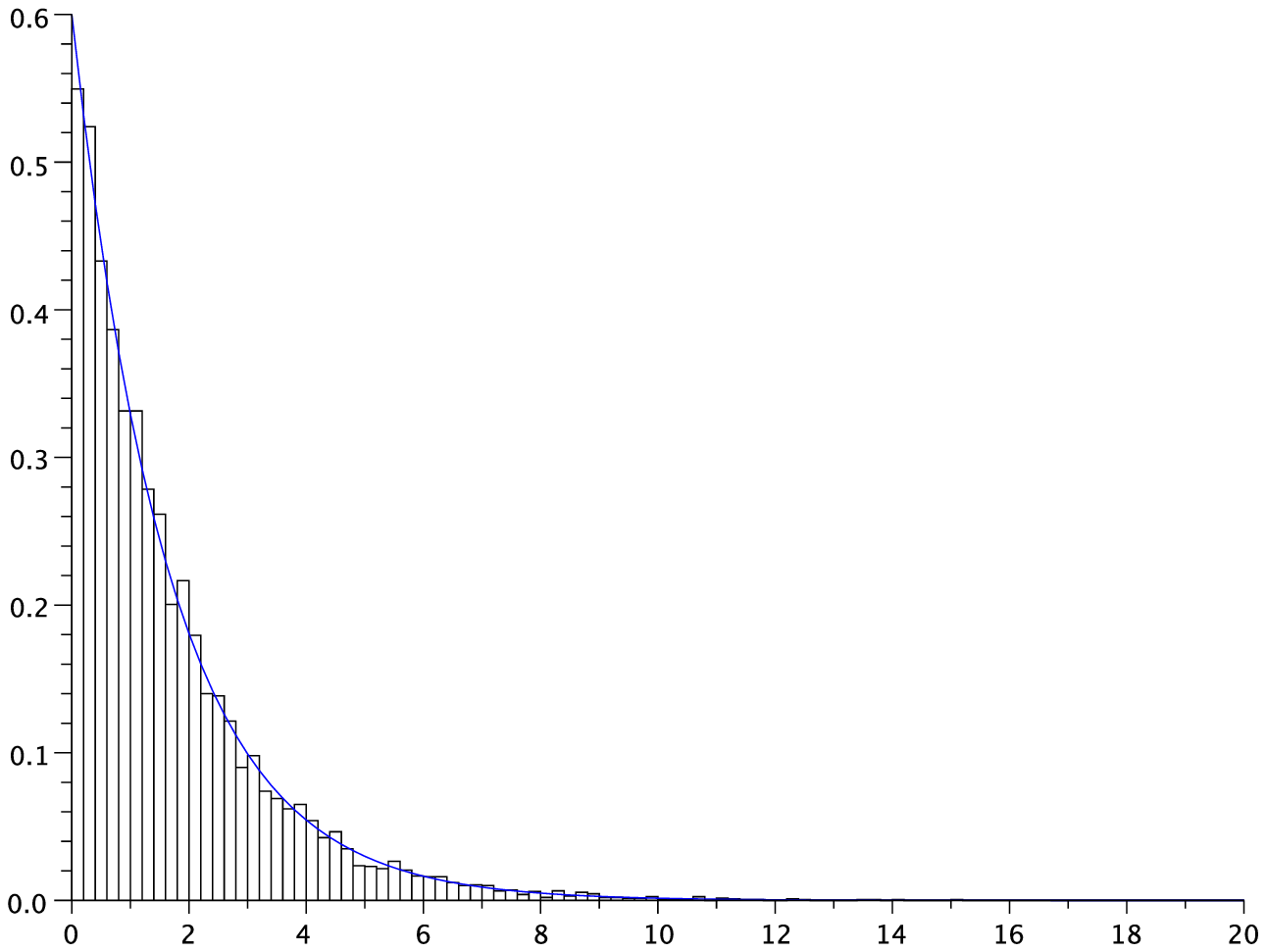}
\includegraphics[scale=0.25]{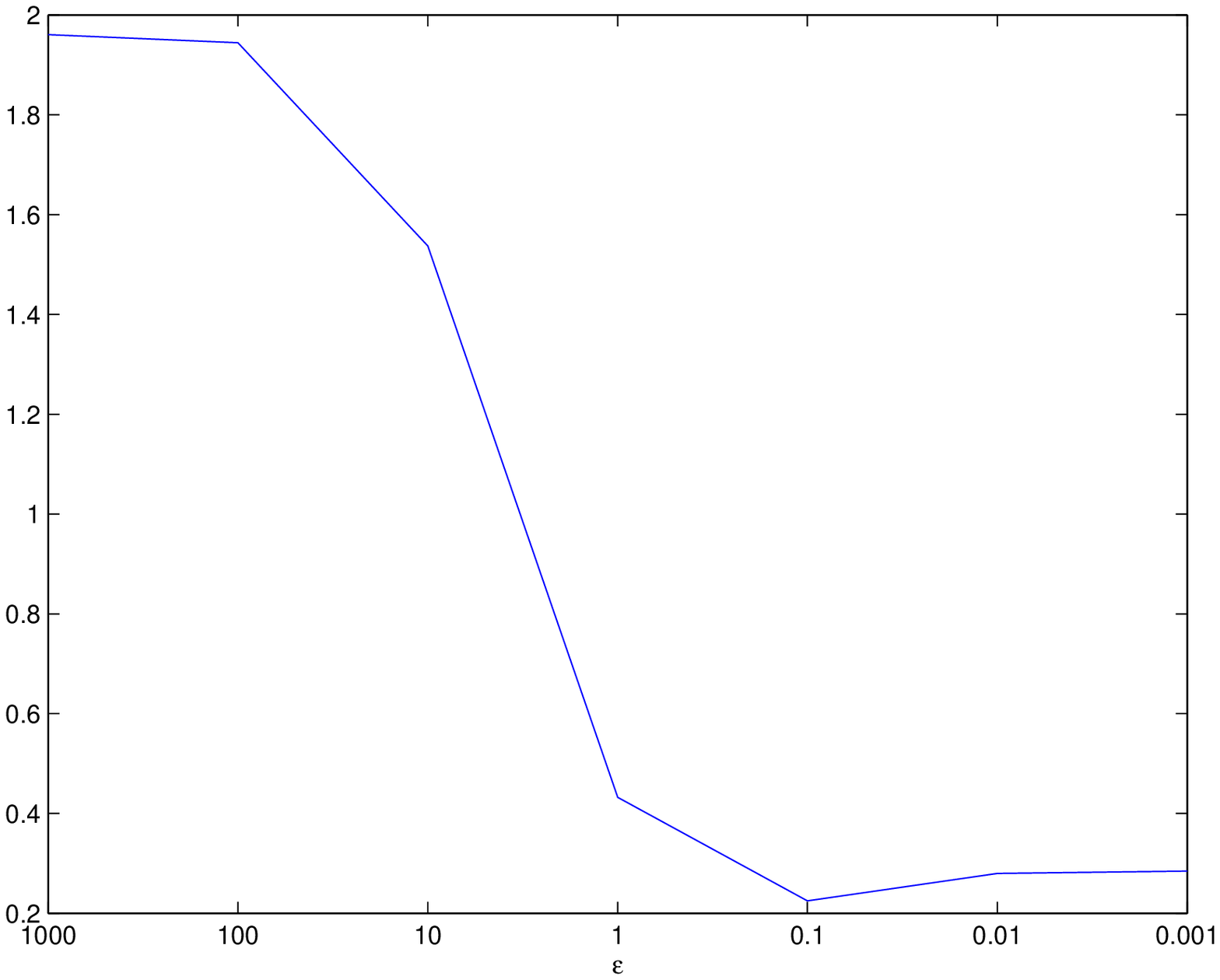}
}
\caption{Left and Center: Distribution of $S^\epsilon_0$, the first
  exit time from a macro-state (Left: large $\epsilon=1$. Center: small
  $\epsilon=10^{-3}$). 
Right: $L^1$ error~\eqref{eq:err_L1} on the distribution of
  $S^\epsilon_0$, as a function of $\epsilon$.
\label{hist_Z_S_0}}
\end{figure}


\medskip

We next study the distribution of the amplitude of the first jump of the
macroscopic variable $Z^\epsilon$, that is the distribution of the
random variable 
$$
\Delta Z^\epsilon := Z^\epsilon_{S^\epsilon_0}-Z_0.
$$ 
On Fig.~\ref{Z_moyenne_variance_z}, we show
the empirical expectation and variance of $\Delta
Z^\epsilon$, which are observed to converge to their asymptotic
values. Note that the limiting process $Z$, the generator of which is
the operator~\eqref{eq:def_L_many}, drifts to the right, since
$\lambda_r>\lambda_l$. We compute that
$$
\EE\bra{\Delta Z} = \PP\bra{\Delta Z=1} \times 1
+ \PP\bra{\Delta Z=-1} \times (-1)
=\frac{2/5}{3/5}-\frac{1/5}{3/5}=\frac{1}{3},
$$
and we indeed see on Fig.~\ref{Z_moyenne_variance_z} that 
$\dps \lim_{\epsilon \to 0} \EE\bra{\Delta Z^\epsilon} = \EE\bra{\Delta Z}$.
On Fig.~\ref{hist_Z_z}, we show the empirical distribution of $\Delta
Z^\epsilon$ for a small $\epsilon$, and we observe that
$$
\PP\bra{\Delta Z^\epsilon=1} 
\approx 
\PP\bra{\Delta Z=1} 
=\frac{2}{3},\quad 
\PP\bra{\Delta Z^\epsilon=-1}
\approx 
\PP\bra{\Delta Z=-1}
=\frac{1}{3}.
$$ 
We also check on Fig.~\ref{hist_Z_z}
that the $L^1$ error
between the distribution of $\Delta Z^\epsilon$ and that of $\Delta Z$
goes to 0 as $\epsilon$ goes to zero.

\begin{figure}[h]
\begin{center}
\psfrag{moyenne de Z}{}
\psfrag{'moyenne_ene.txt'}{\tiny \hspace{0.3cm} mean}
\psfrag{variance de Z}{}
\psfrag{'variance_ene.txt'}{\tiny variance}
\includegraphics[scale=0.49]{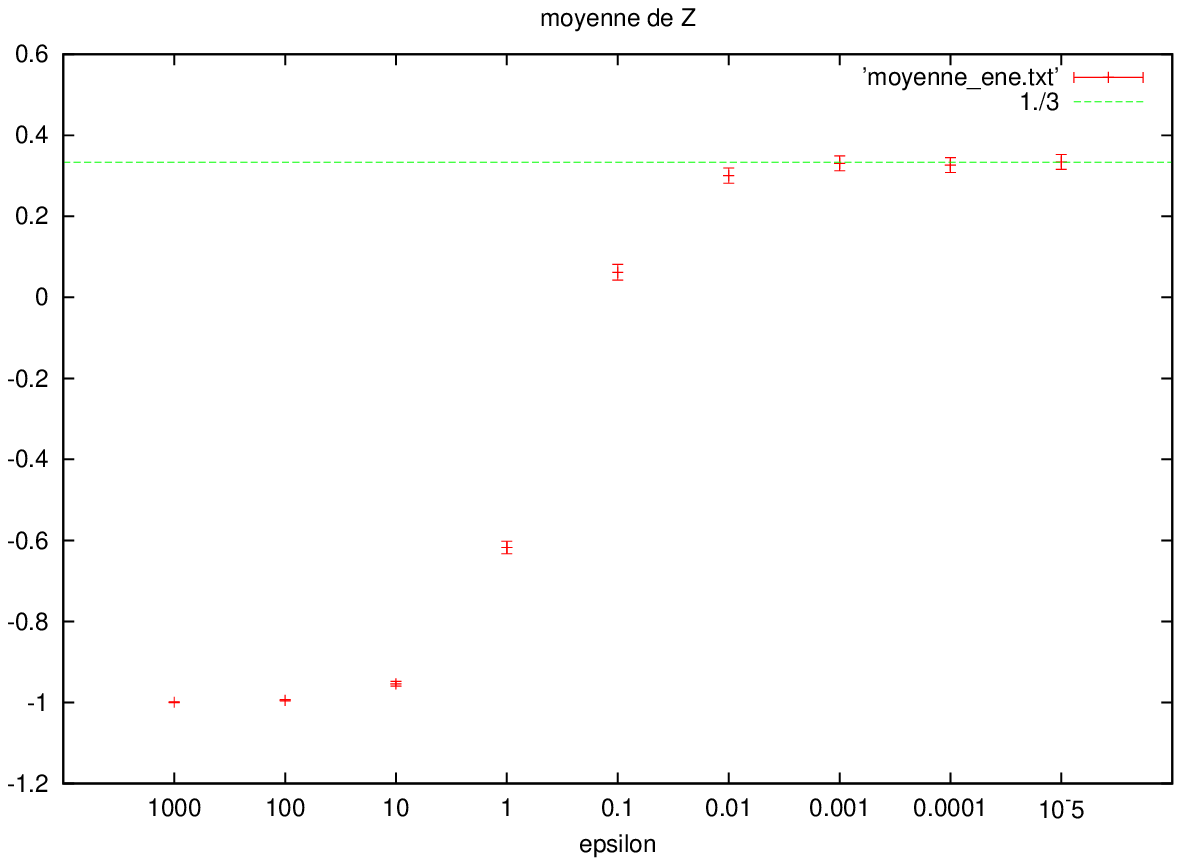}
\includegraphics[scale=0.49]{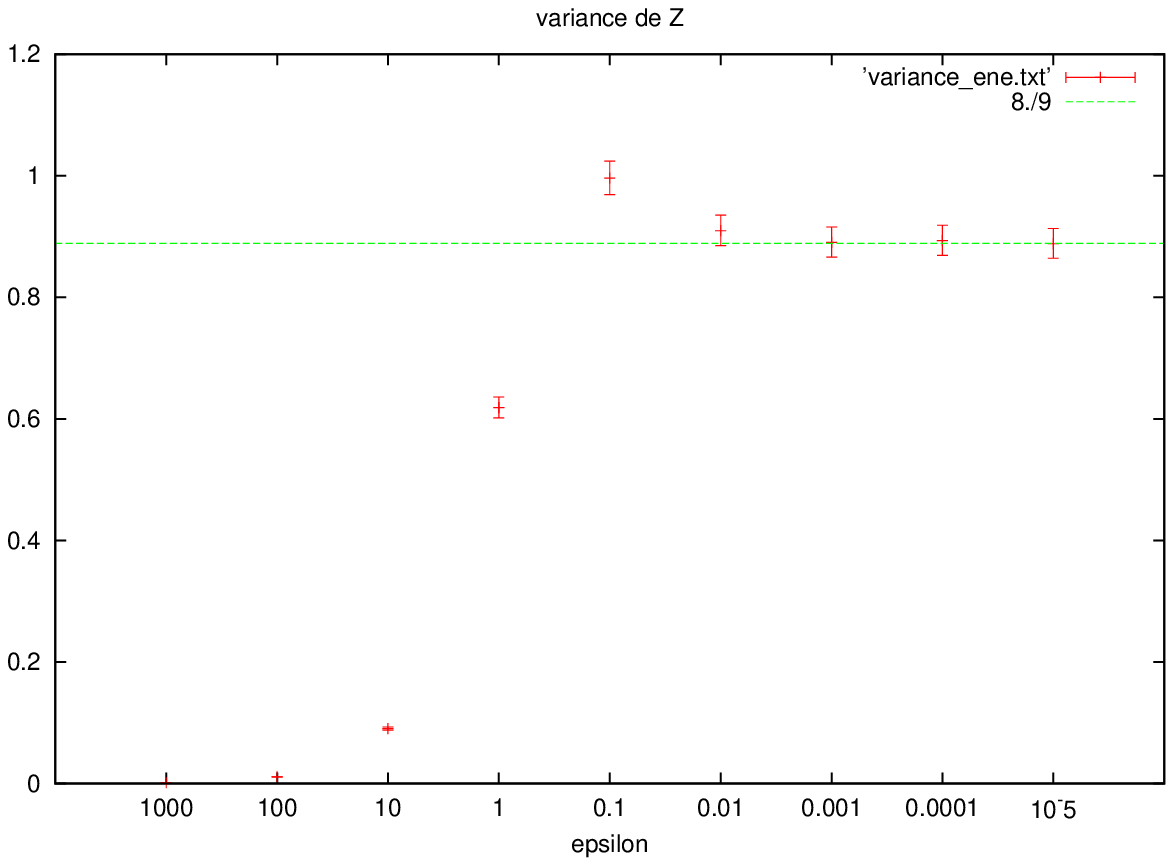}
\end{center}
\caption{Empirical expectation (left) and variance (right) of $\Delta
  Z^\epsilon$ as a function of $\epsilon$.
\label{Z_moyenne_variance_z}}
\end{figure}

\begin{figure}[h!]
\centerline{ 
\includegraphics[scale=0.3]{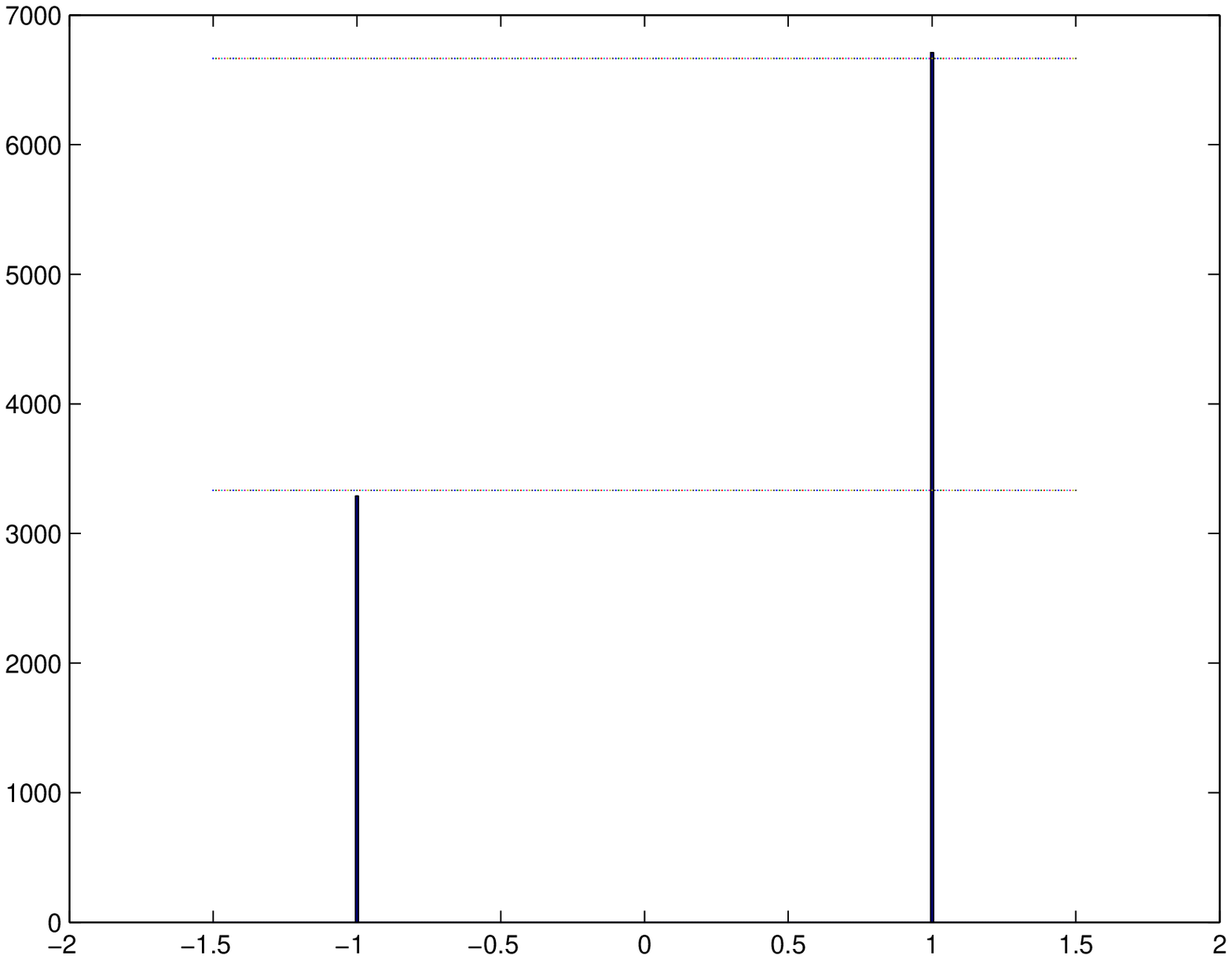}
\quad \quad
\includegraphics[scale=0.3]{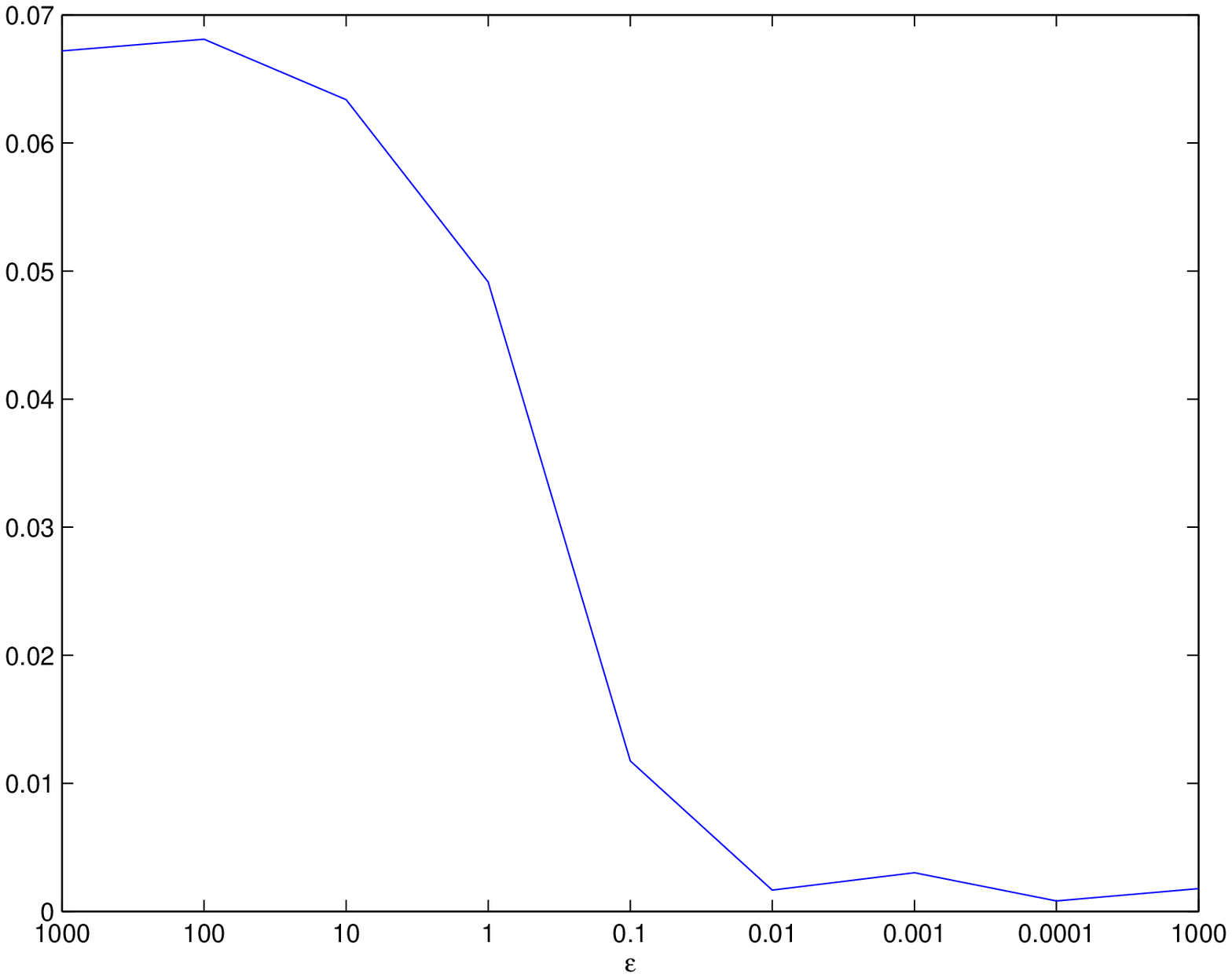}
}
\caption{Left: Empirical estimation of the probabilities $\mathbb P
  \left(\Delta Z^\epsilon =-1\right)$ and 
$\mathbb P \left(\Delta Z^\epsilon =1\right)$ for $\epsilon=10^{-5}$.
Right: $L^1$ error~\eqref{eq:err_L1} on the distribution of $\Delta
Z^\epsilon$ as a function of $\epsilon$.
\label{hist_Z_z}}
\end{figure}



\section{Exchange of energy in a system of two particles}
\label{sec_ene}

In this final section, we consider a more elaborate model. This model is
composed of two particles. The state of the first (resp. second) particle 
is described by the vector $X$ (resp. $Z$) with $k$ components. An energy
functional ${\cal E}$
is associated to each particle. The system evolves
either due to the internal evolution within a particle, or due to the interaction
between the two particles. In the first case, the energy of each particle 
is preserved. In the second case, the internal energy of each particle
varies, but the total energy of the system, 
${\cal E}(X) + {\cal E}(Z)$, is preserved. Interactions between the
particles occur on a much slower time scale than the internal evolution of
each particle. One must hence wait for a long time before observing any
change in each particle energy.

The model is presented in
details in Section~\ref{pres_ene}. In Section~\ref{section_3_2}, we
establish a convergence result on the time evolution of the energy of
the first particle, which is our macroscopic variable of interest. We
only give there a sketch of the proof as it follows the same arguments
as before. 

One of the interesting features of this model is that the macroscopic
variable of interest is {\em not} one cartesian coordinate of the
system. We show that the arguments used in
Sections~\ref{modele_deux_etats} and~\ref{infinite_detats} carry over to
this more general case. 

\subsection{Presentation of the model}
\label{pres_ene}

We consider a model with two particles. Each particle contains $k$
spin-like variables, that can take the value $0$ (spin down) or
$1$ (spin up). At time $t$, the state of the system is given by 
$\overline{Y}^\epsilon_t = 
\left(\overline{X}^\epsilon_t,\overline{Z}^\epsilon_t\right) 
\in M\times M$, where $M=\{0,1\}^k$ is the space for the $k$ spins of
each particle. For each particle, we are given an energy functional
$\mathcal E\left(x\right)=\mathcal E\left(x_1,\dots,x_k\right)$ (with
$x_j \in \{0,1\}$, $1 \leq j \leq k$) that
depends on the state of the $k$ spins of the particle. One choice is to
set $\mathcal E\left(x\right)=x_1+\dots+x_k$, which would correspond
(up to a multiplicative factor) to the energy of $k$ spins in a uniform
magnetic field. 

The intensity matrix of the process $\overline{Y}^\epsilon$ is built as
follows: 
\begin{itemize}
\item the internal dynamic of each particle is governed by an intensity
  matrix $Q$ that conserves its energy, i.e. $Q\left(x,x'\right)=0$ if
  $\mathcal E\left(x\right) \neq \mathcal E\left(x'\right)$. 
We define the global internal dynamic intensity matrix $\overline{Q}^0$ by 
\begin{eqnarray*}
\overline{Q}^0\left(\left(x,z\right),\left(x',z\right)\right)
&=&
Q\left(x,x'\right) \ \ \mbox{if $x\neq x'$},
\\
\overline{Q}^0\left(\left(x,z\right),\left(x,z'\right)\right)
&=&
Q\left(z,z'\right) \ \ \mbox{if $z\neq z'$},
\\
\overline{Q}^0\left(\left(x,z\right),\left(x',z'\right)\right)
&=&
0 \ \ \mbox{if $x\neq x'$ and $z \neq z'$}.
\end{eqnarray*}

\item the coupling between the two particles is described by a
  matrix $C$. This coupling introduces an exchange of energy between the
  two particles, while keeping the total energy constant. We assume that
  $C$ is such that
$$
C\left(\left(x,z\right),\left(x',z'\right)\right)= 0 
\mbox{ if $\mathcal E\left(x\right)+\mathcal E\left(z\right) \neq
\mathcal E\left(x'\right)+\mathcal E\left(z'\right)$ or if 
$\mathcal E\left(x\right) = \mathcal E\left(x'\right)$}.
$$

\item the transition intensities of the process $Y^\epsilon$ are given
  by 
$$
\overline{Q}^\epsilon=\overline{Q}^0+\epsilon C.
$$ 
\end{itemize}
We make the following assumption:
\begin{equation}
\label{ass:irred_e}
\begin{array}{c}
\text{the matrix $Q$ is such that, for every admissible energy level $e$,}
\\ \noalign{\vskip 3pt}
\text{the state class of energy $e$ is irreducible}
\\ \noalign{\vskip 3pt}
\text{and thus admits a unique invariant probability measure $\overline{\pi}^e$.}
\end{array}
\end{equation}
We denote by $\pi^e$ the probability measure on $M$ defined by
$\pi^e\left(x\right)=\overline{\pi}^e\left(x\right)$ if $\mathcal E
\left(x\right)=e$ and $\pi^e\left(x\right)=0$ otherwise. Any
normalized linear combination of the measures $\pi^e$ (with non-negative
coefficients) is thus an invariant
probability measure of $Q$. We consider the state classes of  
$M \times M$ such that the energy of each particle stays constant. These
classes are irreducible and admit a unique invariant probability measure
$\pi^e \otimes \pi^{e'}$. The invariant probability measures of
$\overline{Q}^0$ are of the form
$(Z')^{-1} \sum_{e,e'} Z\left(e,e'\right) \pi^e \otimes \pi^{e'}$, where
$Z\left(e,e'\right) \geq 0$ are some coefficients and where $Z'$ is a
normalization constant.

\subsection{Main result}
\label{section_3_2}

As pointed out above, our quantity of interest is 
$\mathcal E\left(\overline{X}^\epsilon_t\right)$, the energy of the
first particle. In view of the chosen scaling in
$\overline{Q}^\epsilon$, the characteristic time scale of evolution of
this energy is of the order of $\epsilon^{-1}$. We thus need to rescale
in time the evolution, and therefore introduce 
$Y^\epsilon_t
=
(X^\epsilon_t,Z^\epsilon_t)
:=\overline{Y}^\epsilon_{t/\epsilon}$ 
and 
$\mathcal E^\epsilon_t
=\mathcal E\left(\overline{X}^\epsilon_{t/\epsilon}\right)$. 

We now identify the limit of the process $\mathcal
E^\epsilon_t$, and state the main convergence result of that section,
namely Theorem~\ref{energie} below. 
Let
$L^\epsilon$ be the generator of $(Y^\epsilon_t)_{t\geq 0}$, which is a
jump process of intensity matrix $Q^\epsilon=\epsilon^{-1}
\overline{Q}^\epsilon$. We have
$$
L^\epsilon \phi\left(x,z\right)
=
\sum_{x',z'\in M}
Q^\epsilon\left(\left(x,z\right),\left(x',z'\right)\right)
\ [\phi\left(x',z'\right) - \phi\left(x,z\right)].
$$
For a function $\phi\left(x,z\right)=F\left(x\right)$ that only depends
on the state of the first particle, we have
\begin{eqnarray*}
\left(L^\epsilon F\right)\left(x,z\right) 
& = & 
\sum_{x',z'\in M} 
Q^\epsilon\left(\left(x,z\right),\left(x',z'\right)\right)
\ [F\left(x'\right) - F\left(x\right)]
\\
& = & 
\epsilon^{-1} \sum_{x'\in M} Q\left(x,x'\right) \ 
[F\left(x'\right) - F\left(x\right)]
\\
&& 
+
\sum_{x',z'\in M} C\left(\left(x,z\right),\left(x',z'\right)\right)
[F\left(x'\right) - F\left(x\right)].
\end{eqnarray*}
Now choosing $F=\mathcal E$, we obtain
$$
l\left(x,z\right)
:=
\left(L^\epsilon \mathcal E\right)\left(x,z\right) 
=
\sum_{x',z'\in M} C\left(\left(x,z\right),\left(x',z'\right)\right) \ 
[\mathcal E\left(x'\right) - \mathcal E\left(x\right)]
$$ 
since $Q\left(x,x'\right)=0$ if 
$\mathcal E\left(x'\right) \neq \mathcal E\left(x\right)$. We suppose
that, at the initial time, the energy of each particle is independent of
$\epsilon$: $\mathcal E(X^\epsilon_0)=E_x$ and $\mathcal E
\left(Z^\epsilon_0\right)=E_z$, where $E_x$ and $E_z$ are independent of
$\epsilon$. The total initial energy is denoted $E=E_x+E_z$. 

Using Proposition~\ref{lemma1}, we see that there exists a martingale
$M_t^\epsilon$ such that
\begin{equation}
\label{eq_gen}
\mathcal E^\epsilon_t=E_x+\int^t_0 l\left(X_s^\epsilon,
Z_s^\epsilon\right)ds +M_t^\epsilon.
\end{equation}
As in Section~\ref{section_1_2}, we can show that there exists a process 
$\mathcal E$ such that $\mathcal E^{\epsilon}$ converges to $\mathcal E$, up
to extraction. We now identify the distribution of the process $\mathcal
E$ and show that it is independent of the chosen sub-sequence (thereby
proving that all the sequence $\mathcal E^{\epsilon}$ converges to
$\mathcal E$, and not only a subsequence).

We introduce the average of the drift in~\eqref{eq_gen} with respect to
an invariant measure of $\overline{Q}^0$:
\begin{eqnarray*}
\widetilde{l}(e_1,e_2)
&=&
\sum_{x \text{ s.t. } \mathcal E(x)=e_1 \atop z \text{ s.t. } \mathcal
  E(z)=e_2} l(x,z) \pi^{e_1}(x) \pi^{e_2}(z)
\\
&=&
\sum_{x \text{ s.t. } \mathcal E(x)=e_1 \atop z \text{ s.t. } \mathcal
  E(z)=e_2} \pi^{e_1}(x) \pi^{e_2}(z)
\sum_{x',z'\in M} C\left(\left(x,z\right),\left(x',z'\right)\right) \ 
[\mathcal E\left(x'\right) - \mathcal E(x)]
\\
&=&
\sum_{x \text{ s.t. } \mathcal E(x)=e_1 \atop z \text{ s.t. } \mathcal
  E(z)=e_2} \pi^{e_1}(x) \pi^{e_2}(z)
\sum_{x',z' \text{ s.t. } \atop 
\mathcal E\left(x'\right)+ \mathcal E\left(z'\right)=e_1+e_2} 
C\left(\left(x,z\right),\left(x',z'\right)\right) \ 
[\mathcal E\left(x'\right) - e_1].
\end{eqnarray*}
We further define 
$$
f\left(x,z\right) = l(x,z) - 
\widetilde{l}\left(\mathcal E(x),\mathcal E(z)\right)
$$
and 
\begin{equation}
\label{eq:def_g_ene}
g(e)=\widetilde{l}\left(e,E-e\right),
\end{equation}
and recast~\eqref{eq_gen} as
\begin{equation}
\label{eq_gen_2}
\mathcal E^\epsilon_t = E_x + 
\int^t_0 f\left(X_s^\epsilon, Z_s^\epsilon\right)ds
+\int^t_0 g\left(\mathcal E_s^\epsilon\right)ds +M_t^\epsilon.
\end{equation}
We now want to pass to the limit $\epsilon \to 0$ in~\eqref{eq_gen_2}.

\medskip

Consider the second term in the right-hand side of\eqref{eq_gen_2}.
By construction, $f$ is the difference between the function $l$ and its
average $\widetilde{l}$. The average of $f$ is thus expected to
vanish. This is indeed the case: for any two energies $e_1$ and $e_2$,
we compute
\begin{eqnarray*}
\bra{\pi^{e_1} \otimes \pi^{e_2} }^T f
& = & 
\sum_{x \text{ s.t. } \mathcal E(x)=e_1 \atop z \text{ s.t. } \mathcal
  E(z)=e_2} \pi^{e_1}(x) \pi^{e_2}(z) f(x,z)
\\
&=&
\sum_{x \text{ s.t. } \mathcal E(x)=e_1 \atop z \text{ s.t. } \mathcal
  E(z)=e_2} \pi^{e_1}(x) \pi^{e_2}(z) l(x,z) 
- 
\sum_{x \text{ s.t. } \mathcal E(x)=e_1 \atop z \text{ s.t. } \mathcal
  E(z)=e_2} \pi^{e_1}(x) \pi^{e_2}(z) \widetilde{l}(e_1,e_2) 
\\
&=&
\sum_{x \text{ s.t. } \mathcal E(x)=e_1 \atop z \text{ s.t. } \mathcal
  E(z)=e_2} \pi^{e_1}(x) \pi^{e_2}(z) l(x,z) 
- 
\widetilde{l}(e_1,e_2) 
\\
& = & 0.
\end{eqnarray*}
Therefore, for any $\mu$ such that $\mu^T \overline{L}^0 =
0$, we have $\mu^Tf=0$. Following the arguments of
Proposition~\ref{prop1}, we deduce that, for any $t$, the random
variable $\dps \int^t_0 f\left(X_s^\epsilon, Z_s^\epsilon\right)ds$
converges to $0$ in $L^2(\O)$, and that the random process also weakly
converges to 0.

\medskip

We now turn to the third term of the right-hand side
of~\eqref{eq_gen_2}, and claim that (up to the extraction of a
sub-sequence) 
\begin{equation}
\label{conj}
\int^t_0
g\left(\mathcal E_s^\epsilon\right)ds 
\ \Rightarrow \ 
\int^t_0 g\left(\mathcal E_s \right)ds,
\end{equation}
where $\mathcal E_s$ is such that $\mathcal E_s^\epsilon  
\ \Rightarrow \ \mathcal E_s$.
The function $g$ is defined on the set $\mathcal E \left(M\right)$
of the
admissible energies, which is a finite set (we recall that 
$M=\{0,1\}^k$). We denote by $\widetilde{g}$ the
P1 interpolation of $g$ on $\mathbb R$, which is
a piecewise linear function defined on $\RR$ and that coincides with $g$
on $\mathcal E \left(M\right)$. The function $\widetilde{g}$ being
Lipschitz on $\mathbb R$, we infer from Lemma~\ref{lemme_continuite} that
the function $\dps \Phi : x \mapsto \left(\int^t_0 
\widetilde{g}\left(x_s\right)ds\right)_{t}$ is continuous. Therefore,
the convergence 
$\mathcal E_s^\epsilon \ \Rightarrow \ \mathcal E_s$ implies that
$$
\left( \int^t_0 g \left(\mathcal E_s^\epsilon\right)\right)
=
\left( \int^t_0 \widetilde{g} \left(\mathcal E_s^\epsilon\right)\right) 
\ \text{converges to} \ 
\left( \int^t_0 \widetilde{g}\left(\mathcal E_s \right)\right)
=
\left( \int^t_0 g\left(\mathcal E_s \right) \right).
$$ 
We thus have proved~\eqref{conj}.

\medskip

We next turn to the last term in the right-hand side of~\eqref{eq_gen_2}.
As in the previous sections, we can show that $M^\epsilon$ weakly
converges (up to extraction) to some martingale $M$. 

We can now pass to the limit $\epsilon \to 0$ in~\eqref{eq_gen_2}, and
obtain that the limit process $\mathcal E$ satisfies
\begin{equation}
\label{eq_gen_limite}
\mathcal E_t=E_x+\int^t_0 g \left(\mathcal E_s \right)ds +M_t.
\end{equation}
It is now easy to recast the above equation in a more useful form. 
In view of~\eqref{eq:def_g_ene}, we indeed note that
$$
g\left(e\right)
=
\sum_{e'}\underbrace{
\sum_{ x \text{ s.t. } \mathcal E(x)=e \atop z \text{ s.t. } \mathcal E(z)=E-e}
\sum_{ x' \text{ s.t. } \mathcal E\left(x'\right)=e' \atop 
z' \text{ s.t. } \mathcal E\left(z'\right)=E-e'} 
\pi^{e}(x) \pi^{E-e}(z) \ 
C\left(\left(x,z\right),\left(x',z'\right)\right)}_{B_E\left(e,e'\right)}
\ (e'-e).
$$
Therefore, the equation~\eqref{eq_gen_limite} reads
$$
\mathcal E_t
=
E_x + \int^t_0 \sum_{e'} B_E\left(\mathcal E_s,e'\right) \,
\left(e'-\mathcal E_s\right)ds +M_t,
$$
where, we recall, $E$ is the total energy of the system, which is
preserved along the dynamics.

\medskip

We conclude this formal approach by pointing out that the above equation
actually does not allow to identify the law of the process $(\mathcal
E_t)_t$. In the proof of Theorem~\ref{th1} (see
Section~\ref{section_1_2}), we performed that step of the proof by using
Lemma~\ref{lemme2}, which is not possible in our context here.
To identify the law of the process $(\mathcal E_t)_t$, we resort
to Lemma~\ref{appendixG}. Consider a bounded function $\phi$ on
$\mathcal E \left(M\right)$, and the martingale
$$
M_t^{\phi,\epsilon}
:=
\phi\left(\mathcal E^\epsilon_t\right)
- 
\phi\left(E_x\right) 
-\int^t_0 \sum_{e'} B_E\left(\mathcal E^\epsilon_s,e'\right) 
\left(\phi\left(e'\right) - \phi\left(\mathcal E^\epsilon_s\right)\right)ds.
$$
Following the same steps as above, we show that each term converges when
$\epsilon$ goes to zero. In particular, $M_t^{\phi,\epsilon}$ converges
to a martingale $M^\phi$ that satisfies
$$
M_t^\phi
=
\phi\left(\mathcal E_t\right)
- 
\phi\left(E_x\right) 
-\int^t_0 \sum_{e'} B_E\left(\mathcal E_s,e'\right) 
\left(\phi\left(e'\right) - \phi\left(\mathcal E_s\right)\right)ds.
$$
Lemma~\ref{appendixG} then implies that $\mathcal E$ is a
jump process of intensity matrix $B=B_E\left(e,e'\right)$. 

\medskip

We thus have the following result:

\begin{theorem}
\label{energie}
We denote by $\mathcal P^\epsilon$ the distribution of the process
$\left(\mathcal E^\epsilon \right)$, where we assumed that the initial
condition $(E_x,E_z)$ is independent of $\epsilon$. We denote by $\mathcal P$
the distribution of the jump process of initial condition $E_x$ and of
intensity matrix $B=B_E\left(e,e'\right)$, with $E=E_x+E_z$. Under the
assumptions on the matrices $Q$ and $C$ described in
Section~\ref{pres_ene}, we have
$$
\mathcal P^\epsilon \Rightarrow  \mathcal P 
\ \mbox{as $\epsilon \rightarrow 0$.}
$$
\end{theorem}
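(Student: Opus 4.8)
The plan is to follow the same three-stage scheme---tightness, extraction of a weak limit, and identification of that limit through a martingale problem---that was used for Theorems~\ref{th1} and~\ref{th2}, adapting the identification step to the fact that the energy observable now takes more than two values. Most of the computation has already been carried out in the formal approach above; the role of the proof is to make the three stages rigorous.

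First, I would establish that the family $(\mathcal P^\epsilon)_{\epsilon>0}$ is relatively compact. Since the total energy $E=E_x+E_z$ is conserved and $M=\{0,1\}^k$ is finite, the observable $\mathcal E^\epsilon_t$ takes its values in the finite set $\mathcal E(M)\cap[0,E]$, and in particular is uniformly bounded, so the analogue of condition~\eqref{3.21i} in Theorem~\ref{critere} is immediate. For the increment condition~\eqref{4.4}, I would start from the martingale decomposition~\eqref{eq_gen}, bound the drift $\int_S^T l(X^\epsilon_s,Z^\epsilon_s)\,ds$ by $|T-S|\,\|l\|_\infty$, and control the martingale increments by Tchebytchev together with the quadratic variation $\langle M^\epsilon\rangle$ computed from Proposition~\ref{lemma1}, exactly as in~\eqref{maj_vq}. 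Here the crucial point is that the $\epsilon^{-1}Q$ contribution drops out because $Q$ conserves energy, so $\langle M^\epsilon\rangle$ is uniformly Lipschitz in $t$ as in~\eqref{eq:quad_var_M}, which gives C-tightness of the martingale part via the Jacod--Shiryaev criterion used in Step~2 of the proof of Theorem~\ref{th1}. Prohorov's theorem then yields a convergent subsequence $\mathcal E^{\epsilon'}\Rightarrow\mathcal E$.

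Second, I would pass to the limit in the decomposition~\eqref{eq_gen_2}. The averaging of the fast fluctuation term $\int_0^t f(X^\epsilon_s,Z^\epsilon_s)\,ds$ is where the internal dynamics is filtered out: by construction $(\pi^{e_1}\otimes\pi^{e_2})^T f=0$ for every pair of energies, hence $\mu^T f=0$ for every left-null vector $\mu$ of $\overline L^0$, and Lemma~\ref{lemme1} furnishes a corrector $u$ solving $\overline L^0 u=f$. Rerunning the estimate of Proposition~\ref{prop1} then gives convergence of this term to $0$ in $L^2(\O)$, hence in distribution. The slow term $\int_0^t g(\mathcal E^\epsilon_s)\,ds$ passes to the limit by continuity of the integral functional: since $g$ is defined only on the finite set $\mathcal E(M)$, I would replace it by its Lipschitz piecewise-linear interpolation $\widetilde g$ on $\RR$, apply Lemma~\ref{lemme_continuite} together with $\mathcal E^{\epsilon'}\Rightarrow\mathcal E$, and use that the two integrands coincide because $\mathcal E^\epsilon_s\in\mathcal E(M)$. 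Collecting the three limits yields~\eqref{eq_gen_limite}.

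The main obstacle, and the step that genuinely departs from the earlier proofs, is the identification of the law of $\mathcal E$. In the two-state model the single scalar relation~\eqref{eq_gen_limite} combined with Lemma~\ref{lemme2} determined the process, but here $\mathcal E$ ranges over more than two values, so one scalar martingale equation does not pin down the law and I must verify the full martingale problem for the generator $B$. Concretely, for every bounded $\phi$ on $\mathcal E(M)$ I would apply $L^\epsilon$ to the function $(x,z)\mapsto\phi(\mathcal E(x))$, split the result into a zero-average fast part and the averaged part $\sum_{e'}B_E(\cdot,e')(\phi(e')-\phi(\cdot))$, and repeat the two convergence arguments above---the $L^2$ vanishing of the fast part, which again requires a corrector from Lemma~\ref{lemme1}, and the continuity of the slow part via an interpolation and Lemma~\ref{lemme_continuite}. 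This shows that $M^{\phi,\epsilon}_t$ converges to a martingale $M^\phi_t=\phi(\mathcal E_t)-\phi(E_x)-\int_0^t\sum_{e'}B_E(\mathcal E_s,e')(\phi(e')-\phi(\mathcal E_s))\,ds$ for each $\phi$. Lemma~\ref{appendixG}, the Stroock--Varadhan uniqueness for the martingale problem, then identifies $\mathcal E$ as the jump process of intensity matrix $B$ and initial condition $E_x$; uniqueness of this limit forces the whole family $\mathcal E^\epsilon$, not merely the subsequence, to converge. A point to check carefully is that $B=B_E(e,e')$ is a genuine intensity matrix on the finite admissible energy set: nonnegativity of its off-diagonal entries follows from nonnegativity of $C$ and of the weights $\pi^e$, so that Lemma~\ref{appendixG} indeed applies.
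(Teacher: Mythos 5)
Your proposal is correct and follows essentially the same route as the paper: the same martingale decomposition~\eqref{eq_gen_2}, the same averaging of the fast part via a corrector from Lemma~\ref{lemme1} and the $L^2$ estimate of Proposition~\ref{prop1}, the same Lipschitz interpolation argument with Lemma~\ref{lemme_continuite} for the slow part, and crucially the same observation that Lemma~\ref{lemme2} no longer suffices so one must verify the full martingale problem for every bounded $\phi$ and invoke Lemma~\ref{appendixG}. Your added remarks (why the $\epsilon^{-1}Q$ contribution drops out of $\langle M^\epsilon\rangle$, and the check that $B_E$ has nonnegative off-diagonal entries) are accurate points the paper leaves implicit.
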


\subsection{Numerical illustration}
\label{sec_sim_ene}

We have numerically simulated the system described above, when each
particle has two spins, i.e. $k=2$. In this case, $\text{Card}(
M)=4$, and the admissible states for each particle are labelled as
$1$: $\downarrow
\downarrow$, $2$: $\uparrow
\downarrow$, $3$: $\downarrow
\uparrow$ and $4$: $\uparrow
\uparrow$.
The energy of each particle is the sum of the energies of its two spins,
which are equal to 0 (spin down, $\downarrow$) or 1 (spin up, $\uparrow$). 
The matrix $Q$ that governs the internal dynamic of each particle is of
the form 
$$
Q = \left(
\begin{array}{cccc}
0 & 0 &0 &0 \\
0 & -q_1& q_1 &0   \\
0 & q_2 & -q_2 &0 \\
0 &0 &0 &0  
\end{array}
\right).
$$
This matrix preserves the energy of the particle as it only allows
transitions between states of the same energy (namely, 
$\uparrow \downarrow$ and $\downarrow \uparrow$). We work with $q_1=10$
and $q_2=1$. 

\medskip

There are five possible initial energies for the complete system:
\begin{itemize}
\item $E=0$ (both particles are initially in the state $1$: $\downarrow
\downarrow$). The system then does not evolve, as only one
state corresponds to that total energy. The case when $E=4$ is similar.
\item $E=1$: initially, one particle is in the state $1$: $\downarrow
\downarrow$, while the other particle is in the state $2$: $\uparrow
\downarrow$ or $3$: $\downarrow \uparrow$. We consider this case 
below. Note that the case when $E=3$ is similar.
\item $E=2$: without loss of generality, we may assume that the initial
  state of each particle is $2$: $\uparrow \downarrow$.
\end{itemize}
In what follows, we only consider the case $E=1$. We have
checked that results obtained in the case $E=2$ lead to the same
qualitative conclusions. 

\medskip

As mentioned above, we assume that the initial state of the
first particle is $2$: $\uparrow \downarrow$ (corresponding to the
energy $E_x=1$), and that the initial state of the second
particle is $1$: $\downarrow \downarrow$ (corresponding to the
energy $E_z=0$).


The matrix $C$ (which encodes how the two particles interact) is chosen
of the form
\begin{eqnarray*}
C\left(\left(2,z\right),\left(x',z'\right)\right) &=& c_1 \ \mbox{if
  $\mathcal E(x)+\mathcal E(z) =
\mathcal E\left(x'\right)+\mathcal E\left(z'\right)$ and
$\mathcal E(x) \neq \mathcal E\left(x'\right)$},
\\
C\left(\left(x,z\right),\left(x',z'\right)\right) &=& c_2 \ \mbox{if
  $x\neq 2$ and $\mathcal E(x)+\mathcal E(z) =
\mathcal E\left(x'\right)+\mathcal E\left(z'\right)$
and $\mathcal E(x) \neq \mathcal E\left(x'\right)$},
\\
C\left(\left(x,z\right),\left(x',z'\right)\right) &=& 0
\ \mbox{otherwise}.
\end{eqnarray*}
We work with $c_1=1$ and $c_2=0.2$.

\medskip

We monitor the distribution of $S^\epsilon_0$, the first waiting time
before an exchange of energy between the two
particles occurs. Figures~\ref{moyenne_variance_ene} and~\ref{hist_ene}
show the convergence of the distribution of
$S^\epsilon_0$ to the asymptotic distribution, which is an exponential
distribution of parameter $B\left(1,0\right) = 6/11$.

\begin{figure}[h]
\begin{center}
\psfrag{'moyenne_ene_gnu.txt'}{\tiny \hspace{0.3cm} mean}
\psfrag{'variance_ene_gnu.txt'}{\tiny variance}
\includegraphics[scale=0.4]{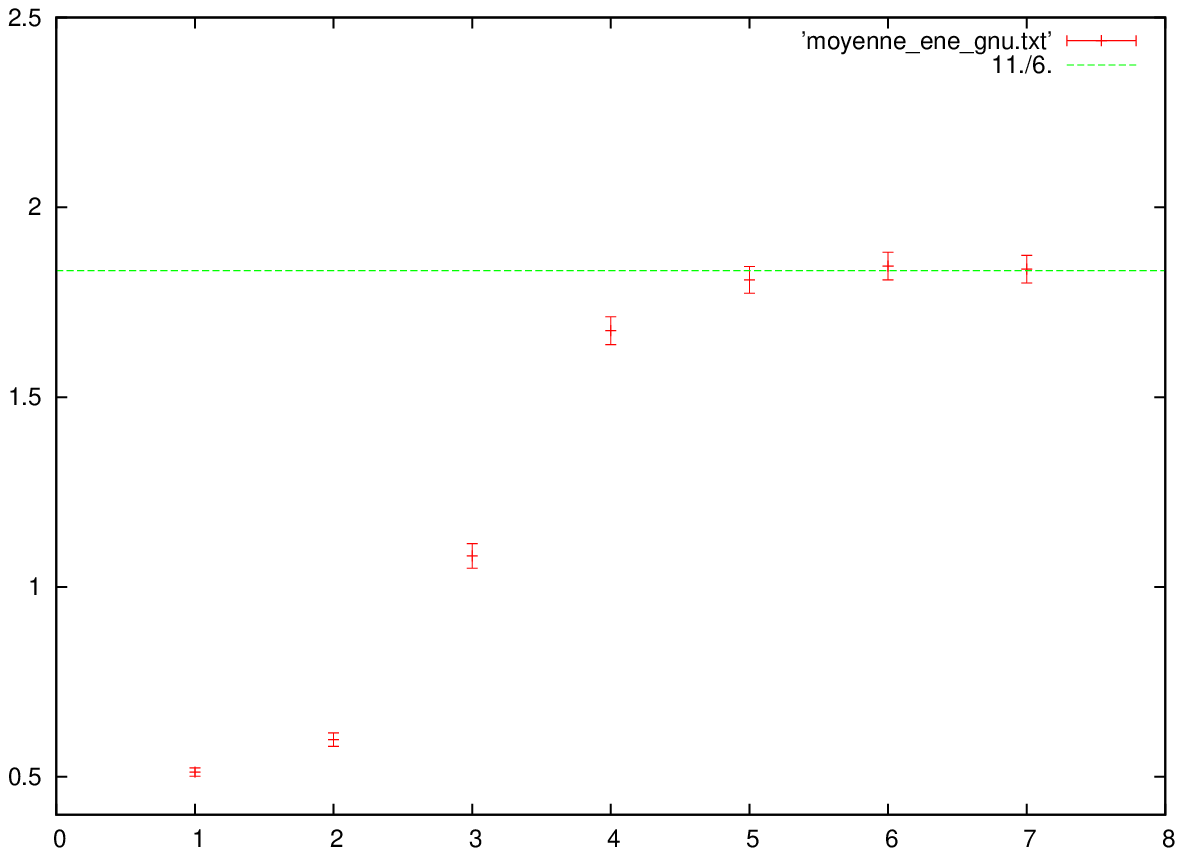}
\includegraphics[scale=0.4]{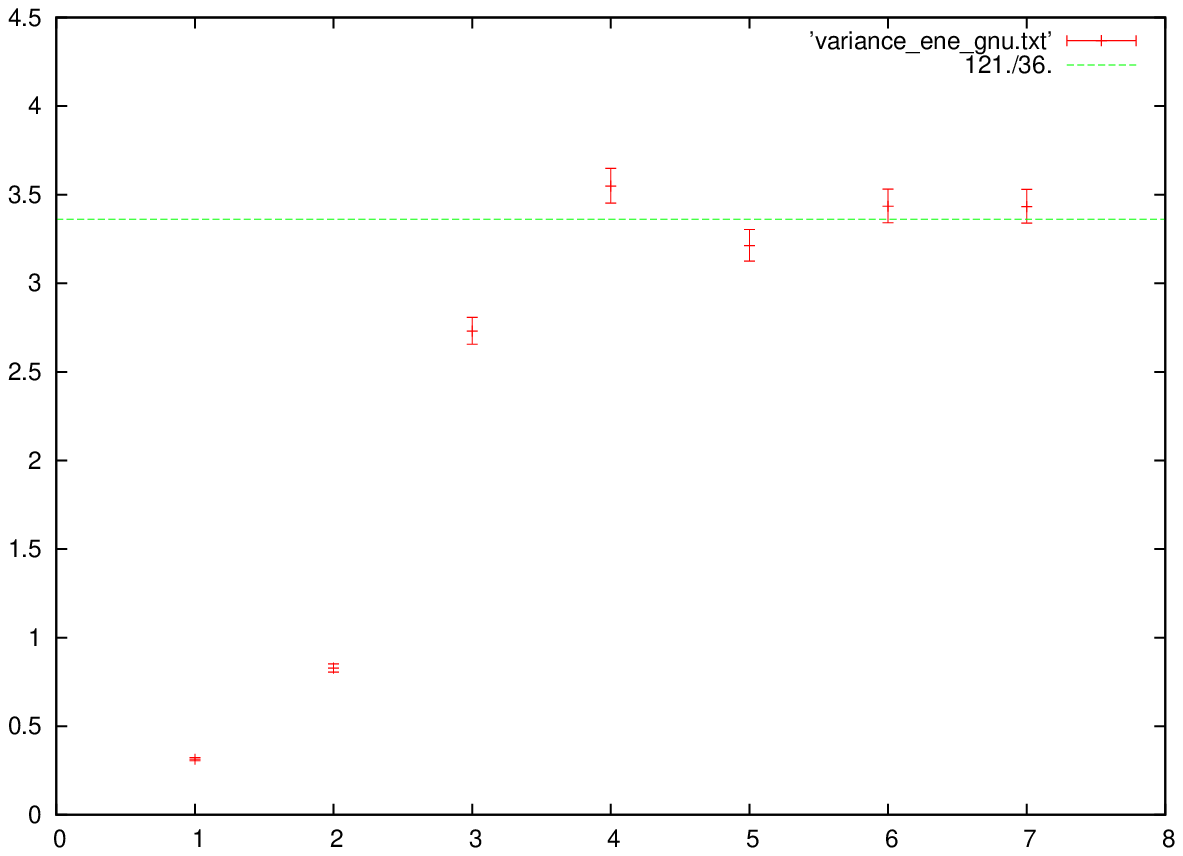}
\end{center}
\caption{Empirical expectation (left) and variance (right) of
  $S^\epsilon_0$ as a function of $\epsilon$.
\label{moyenne_variance_ene}}
\end{figure}


\begin{figure}[h!]
\centerline{ 
\includegraphics[scale=0.35]{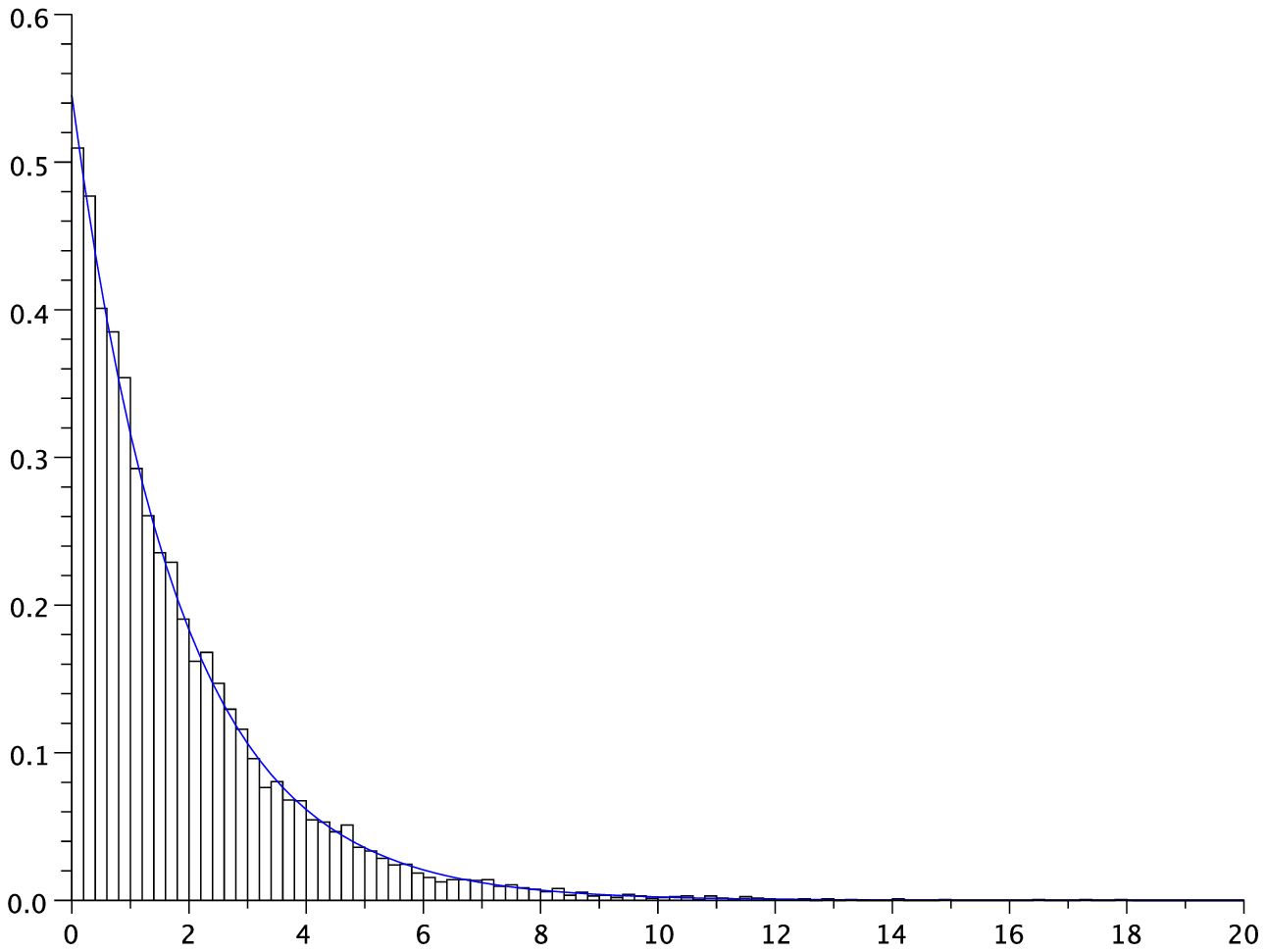}
\includegraphics[scale=0.25]{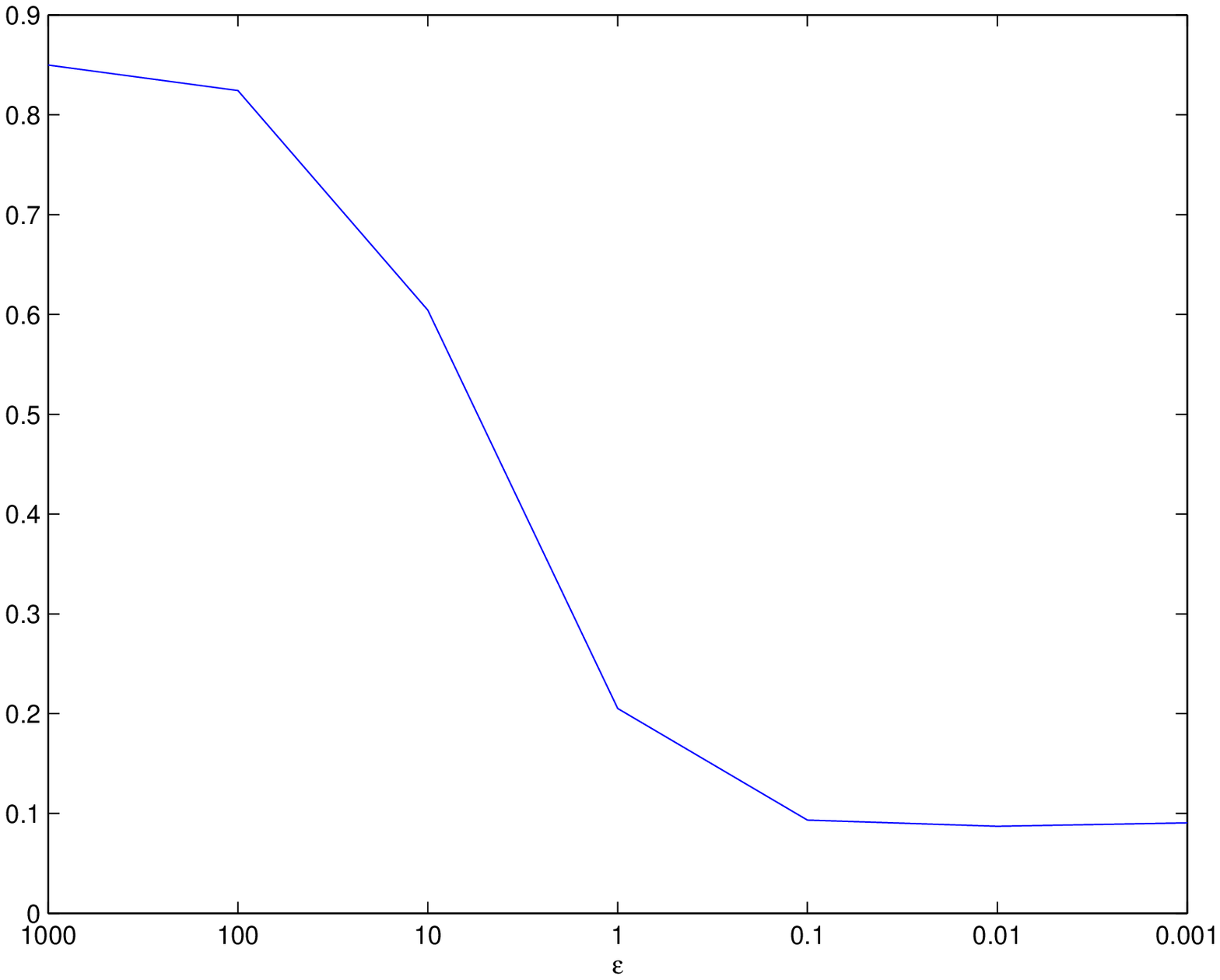}
}
\caption{Left: Distribution of the first waiting time $S^\epsilon_0$ before
  the energy of the first particle changes
  ($\epsilon=10^{-3}$);
Right: $L^1$ error~\eqref{eq:err_L1} between the distribution of
  $S^\epsilon_0$ and its limit distribution.
\label{hist_ene}}
\end{figure}


\section*{Acknowledgments}

The models we consider here and the questions we study were suggested to
us by Stefano Olla. We thank him for his suggestions and the fruitful
discussions we had with him. We also thank Eric Canc\`es for his
constant support throughout this research 
project. We thank Tony Leli\`evre for his careful reading of a previous
version of this article. The research leading to these results has
received funding 
from the European Research Council under the European Community's
Seventh Framework Programme (FP7/2007-2013 Grant Agreement MNIQS no.
258023).

\appendix

\section{Some useful results}
\label{sec:app}

For convenience, we recall in this Appendix some classical results of
probability theory that are needed in this article.



\paragraph{Martingales}




Several results on martingales are useful in this work. 
The first one is an existence and uniqueness result for the martingale
problem introduced by D.W. Stroock and S.R.S. Varadhan (see
e.g.~\cite{bass} and~\cite{SV}): 

\begin{proposition}[Lemma 5.1 of Appendix 1 of~\cite{kipnis}]
\label{lemma1}
Let $\left(X_t\right)_{t \geq 0} $ be a Markov process and let
$\left(\mathcal F_t\right)_{t \geq 0}$ be its natural filtration. For
any bounded function $F$, we introduce
$$
M^F_t = F\left(X_t\right) - F\left(X_0\right) - \int^t_0 LF\left(X_s\right)ds
$$ 
and 
$$
N^F_t = \left(M^F_t\right)^2 - \int^t_0 \left(LF^2\left(X_s\right) -
  2F\left(X_s\right) LF\left(X_s\right)\right) ds,
$$
where $L$ is the generator of the Markov process $(X_t)$. Then  $M^F$
and $N^F$ are $\mathcal F_t$-martingales. In particular, the quadratic
variation of $M^F$ reads
$$
\langle M^F\rangle _t
=
\int^t_0 \left(LF^2\left(X_s\right) - 2F\left(X_s\right)
  LF\left(X_s\right) \right)ds.
$$
\end{proposition}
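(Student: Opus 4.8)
The statement is the classical Dynkin--martingale characterisation, supplemented by the identification of the predictable quadratic variation through the carr\'e du champ operator. In all the applications of this paper $(X_t)$ is a pure jump process on a finite or countable state space with bounded per-state rates, so the transition semigroup $P_t = e^{tL}$ is well defined, bounded functions are in the domain of $L$, and Kolmogorov's equation $\partial_t P_t F = P_t L F = L P_t F$ holds; I would use these facts freely.

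\textbf{Step 1 (the martingale $M^F$).} The plan is to check directly that $\EE\!\left[M^F_t - M^F_s \mid \F_s\right] = 0$ for $s \le t$. By the Markov property, the increment $F(X_t) - F(X_s) - \int_s^t LF(X_u)\,du$ has, conditionally on $\F_s$, the law of the corresponding quantity for the process restarted at $X_s$, so it suffices to prove that for every state $x$ and every $r = t-s \ge 0$,
$$
\EE^x\!\left[F(X_r) - F(x) - \int_0^r LF(X_u)\,du\right] = 0 .
$$
Here $\EE^x[F(X_r)] = P_r F(x)$, and since $LF$ is bounded, Fubini gives $\EE^x\!\left[\int_0^r LF(X_u)\,du\right] = \int_0^r P_u(LF)(x)\,du$; Kolmogorov's equation then yields $\int_0^r P_u(LF)(x)\,du = P_r F(x) - F(x)$, so the bracket vanishes. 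Hence $M^F$ is an $\F_t$-martingale.

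\textbf{Step 2 (reduction for $N^F$).} Applying Step 1 to $F^2$ shows that $M^{F^2}_t = F^2(X_t) - F^2(X_0) - \int_0^t LF^2(X_s)\,ds$ is also a martingale. To treat $N^F$, I would write, for $s \le t$,
$$
(M^F_t)^2 - (M^F_s)^2 = (M^F_t - M^F_s)^2 + 2\,M^F_s\,(M^F_t - M^F_s),
$$
take $\EE[\,\cdot\mid\F_s]$, and use that $M^F_s$ is $\F_s$-measurable while $M^F$ is a martingale to cancel the cross term. The whole question thus reduces to establishing the carr\'e du champ identity
$$
\EE\!\left[(M^F_t - M^F_s)^2 \mid \F_s\right] = \EE\!\left[\int_s^t \Gamma F(X_u)\,du \ \Big|\ \F_s\right], \qquad \Gamma F := LF^2 - 2F\,LF .
$$

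\textbf{Step 3 (the main obstacle and conclusion).} The heart of the matter, and the step I expect to be delicate, is this last identity. Exploiting the jump structure one first computes $\Gamma F(x) = \sum_{y} Q(x,y)\,(F(y)-F(x))^2 \ge 0$, so the right-hand side is the expected sum of the squared jumps of $F(X_\cdot)$. Matching it with the left-hand side requires an infinitesimal analysis: on a short interval $[s,s+h]$, conditionally on $X_s = x$, a jump to $y$ (probability $Q(x,y)h + o(h)$) contributes $(F(y)-F(x))^2$ to the squared increment, whereas the drift part $\int LF$ contributes only $O(h^2)$, so the infinitesimal growth rate of $\EE^x[(M^F_r)^2]$ is exactly $\Gamma F(x)$. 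Upgrading this to the exact identity is the technical crux: either one uses a telescoping/discretisation argument with uniform control of the $o(h)$ remainders (legitimate here because the rates are bounded), or one identifies $\int_0^\cdot \Gamma F(X_u)\,du$ as the predictable compensator of the jumps of $(M^F)^2$. Once the displayed identity holds, $N^F$ is a martingale by construction, and the formula $\langle M^F\rangle_t = \int_0^t \Gamma F(X_s)\,ds$ follows from the uniqueness of the predictable increasing process making $(M^F)^2 - \langle M^F\rangle$ a martingale.
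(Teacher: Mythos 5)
The paper does not prove this proposition at all: it is imported verbatim from the literature (Lemma~5.1 of Appendix~1 of~\cite{kipnis}) and used as a black box, so there is no internal argument to compare yours against. Judged on its own terms, your outline is the standard one -- and essentially the one in the cited reference: Dynkin's formula via Kolmogorov's equation for $M^F$, the orthogonal decomposition $(M^F_t)^2-(M^F_s)^2=(M^F_t-M^F_s)^2+2M^F_s(M^F_t-M^F_s)$ reducing $N^F$ to the carr\'e du champ identity, and the computation $\Gamma F(x)=\sum_{y}Q(x,y)\left(F(y)-F(x)\right)^2$ are all correct. The only place you stop short is the identity $\EE\left[(M^F_t-M^F_s)^2\mid\F_s\right]=\EE\left[\int_s^t\Gamma F(X_u)\,du\mid\F_s\right]$ itself, which you rightly flag as the crux but only describe a plan for. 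The first strategy you name does close it cleanly here: partition $[s,t]$ with mesh $h$, use orthogonality of martingale increments to write the left-hand side as $\sum_i\EE\left[(M^F_{t_{i+1}}-M^F_{t_i})^2\mid\F_s\right]$, and observe that by the Markov property each summand equals $\EE\left[\EE^{X_{t_i}}[(M^F_h)^2]\mid\F_s\right]$, where $\EE^{x}[(M^F_h)^2]=P_h(F^2)(x)-2F(x)P_hF(x)+F^2(x)+O(h^2)=h\,\Gamma F(x)+O(h^2)$ with a remainder uniform in $x$ because $F$, $LF$ and the rates are bounded; letting $h\to 0$ gives the identity. With that step written out the proof is complete. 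One small point of care: since $M^F$ is a jump martingale, the last assertion concerns the \emph{predictable} quadratic variation rather than $[M^F]$, and your appeal to uniqueness is legitimate precisely because $\int_0^t\Gamma F(X_s)\,ds$ is continuous, hence predictable.
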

We recall that for a continuous local martingale $M$, the process
$\langle M \rangle$ is defined to be the unique right-continuous and
increasing predictable process starting at zero such that $M^2-\langle
M\rangle$ is a local martingale. 

\medskip

The next result is of paramount importance to prove that a process is a
jump process, and to identify its generator. We state here this result
as a simplified version of~\cite[Theorem 21.11]{kallenberg}.

\begin{lemma}[Uniqueness result for the martingale problem]
\label{appendixG} 

Let $F$ be a countable space, $Z_t$ a stochastic process valued in $F$
and $L$ an operator on bounded functions $\phi:F\rightarrow \RR$ defined by 
$$
L\phi (x)=\sum_{x'\in F}L_{x,x'}\bra{\phi(x')-\phi(x)},
$$
where $L_{x,x'}\geq 0$ for any $x, x'\in F$ and $\sup_{x,x'\in F}L_{x,x'}<\infty$.
If for any bounded function $\phi:F\rightarrow \RR$, the process
$$
M_t^\phi := \phi\left(Z_t\right) - \phi\left(Z_0\right) -\int_0^t
L\phi\left(Z_s\right) \, ds
$$ 
is a martingale w.r.t. the natural filtration of $(Z_t)_{t\geq 0}$, then
$(Z_t)_{t\geq 0}$ is the jump process of initial
condition $Z_0$ and of generator $L$.
\end{lemma}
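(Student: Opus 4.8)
The final statement is Lemma~\ref{appendixG}, the uniqueness result for the martingale problem of Stroock and Varadhan, which the paper presents as a simplified version of~\cite[Theorem 21.11]{kallenberg}. Let me sketch a self-contained proof plan.

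\textbf{Proof proposal.}

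The plan is to establish two things: existence (that a jump process with the prescribed generator $L$ does solve the martingale problem) and uniqueness (that the martingale property determines the law of $(Z_t)_{t\geq 0}$ completely). Since the statement only asserts that a process satisfying the martingale property \emph{is} the jump process of generator $L$, the heart of the matter is uniqueness in law, combined with the known fact that the reference jump process is itself a solution. First I would observe that under the hypothesis $\sup_{x,x'\in F} L_{x,x'} < \infty$, the total exit rate $q(x) = \sum_{x'\in F} L_{x,x'}$ is uniformly bounded (here one uses that $F$ is countable but the rates are summable by this bound together with the standard convention that $L_{x,x}$ is set so that rows have the proper structure), so the minimal jump process with these rates is non-explosive and is a well-defined c\`adl\`ag Markov process whose generator is exactly $L$ on bounded functions. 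By Proposition~\ref{lemma1}, this reference process solves the martingale problem, giving existence.

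For uniqueness, the key step is to show that any solution $(Z_t)$ has finite-dimensional distributions determined by $L$ and the law of $Z_0$. I would proceed through the Laplace-transformed (resolvent) formulation. Fix $\mu > 0$ and a bounded $\phi$, and seek $\psi$ solving the resolvent equation $\mu\psi - L\psi = \phi$; because $L$ is a bounded operator on $\ell^\infty(F)$ with $\|L\| \le 2\sup_{x,x'}L_{x,x'} < \infty$, the operator $\mu\,\mathrm{Id} - L$ is invertible for $\mu$ large (and in fact for all $\mu>0$ by the maximum-principle structure of $L$), so such a bounded $\psi$ exists and is unique. Applying the martingale hypothesis to $\psi$ and using optional stopping together with the integration-by-parts identity
$$
e^{-\mu t}\psi(Z_t) - \psi(Z_0) = \int_0^t e^{-\mu s}\big(L\psi - \mu\psi\big)(Z_s)\,ds + \int_0^t e^{-\mu s}\,dM_s^\psi,
$$
and then taking expectations and letting $t\to\infty$, I would derive
$$
\mathbb{E}\!\left[\int_0^\infty e^{-\mu s}\phi(Z_s)\,ds\right] = \mathbb{E}\big[\psi(Z_0)\big].
$$
This pins down the $\mu$-potential of the one-dimensional marginals in terms of $L$ and the initial law alone. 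Inverting the Laplace transform in $\mu$ (justified because $s\mapsto \mathbb{E}[\phi(Z_s)]$ is continuous, $\phi$ being bounded and $Z$ c\`adl\`ag with bounded rates) determines $\mathbb{E}[\phi(Z_t)]$ for every $t$ and every bounded $\phi$, hence the one-dimensional marginals.

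The final step upgrades this to all finite-dimensional distributions via the Markov property, and this is where I expect the main obstacle to lie. A solution of the martingale problem is not assumed Markov a priori, so I would use the standard conditioning argument: for $0 \le t_1 < t_2$, the process $t\mapsto M_{t_2+t}^\phi - M_{t_2}^\phi$ is, conditionally on $\mathcal{F}_{t_2}$, again a solution of the same martingale problem started from $Z_{t_2}$; applying the one-dimensional uniqueness just proved conditionally yields that the conditional law of $Z_{t_2+\cdot}$ given $\mathcal{F}_{t_2}$ depends only on $Z_{t_2}$ and coincides with that of the reference jump process. Iterating over finitely many time points gives agreement of all finite-dimensional distributions with those of the reference process, and since both are c\`adl\`ag, their laws on the path space coincide. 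The delicate point in this induction is the measurable-selection/regular-conditional-probability machinery needed to make the conditional martingale problem rigorous; rather than reproving it, I would cite the Stroock--Varadhan framework as packaged in~\cite[Theorem 21.11]{kallenberg}, which is precisely the statement being invoked. This is why the paper, reasonably, states the lemma as a simplified quotation of that reference and defers the technical uniqueness argument to it.
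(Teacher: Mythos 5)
The paper does not actually prove Lemma~\ref{appendixG}: it is recalled in the appendix as a simplified quotation of \cite[Theorem 21.11]{kallenberg}, with no argument given. Your proposal therefore cannot be compared against a proof in the paper; what it does is open the black box and sketch the standard Stroock--Varadhan argument that lives inside that citation. The structure you give is the right one: existence of a solution via the non-explosive bounded-rate jump process together with Proposition~\ref{lemma1}; uniqueness of one-dimensional marginals via the resolvent equation $\mu\psi - L\psi = \phi$ and the identity $\EE\left[\int_0^\infty e^{-\mu s}\phi(Z_s)\,ds\right] = \EE\left[\psi(Z_0)\right]$ obtained from the martingale hypothesis by integration by parts and letting $t\to\infty$ (with Laplace inversion justified by right-continuity of $s \mapsto \EE[\phi(Z_s)]$); and the upgrade to all finite-dimensional distributions through the conditional martingale problem, which you defer --- as the paper does wholesale --- to Kallenberg. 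Since both routes terminate in the same citation, your sketch is best read as an annotated version of the paper's appeal to the literature; what it buys is a clear picture of where the genuine difficulty sits (the regular-conditional-probability/measurable-selection step), which the paper leaves invisible.

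One imprecision is worth fixing. You assert $\|L\| \le 2\sup_{x,x'} L_{x,x'} < \infty$ as an operator on bounded functions, but for a countably infinite $F$ this bound is false: boundedness of the individual entries does not control the row sums $q(x) = \sum_{x'\in F} L_{x,x'}$, and it is $\sup_x q(x) < \infty$ that one needs both for $L\phi$ to be well defined on bounded $\phi$ and for your resolvent, non-explosion, and Laplace-transform steps to go through. Your parenthetical claim that "the rates are summable by this bound together with the standard convention" is not a valid deduction. This defect is inherited from the lemma's statement itself --- the hypothesis $\sup_{x,x'} L_{x,x'}<\infty$ is adequate only when $F$ is finite, or when, as in the application to Theorem~\ref{th2} where $F=\ZZ$, each row has only finitely many nonzero entries --- so it does not sink your proposal, but a careful write-up should replace the entrywise bound by a bound on the total jump rates.
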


\paragraph{Convergence of probability measures}

We now turn to classical results concerning the convergence of
probability measures in $D_{\mathbb R}\left[0,\infty\right)$, which is
the space of functions that are right continuous with left limits (the
so-called c\`ad-l\`ag functions), defined on $\left[0,\infty\right)$ and
valued in $\mathbb R$. Proposition~\ref{convergence_D} gives an
equivalent definition of the Skorohod metric on $D_{\mathbb
  R}\left[0,\infty\right)$ (see~\cite[p.~116-118]{kurtz} for the
original definition of the Skorohod metric, that we actually do not use
in this work).
Theorem~\ref{critere} 
states convergence criteria for probability measures on $D_{\mathbb
  R}\left[0,\infty\right)$. 

\begin{proposition}[Proposition 5.3, Chap. 3 of~\cite{kurtz}]
\label{convergence_D}  

Let $(x_n)_{n\geq 0}$ be a sequence in $D_{\mathbb
  R}\left[0,\infty\right)$ and $x\in D_{\mathbb R}\left[0,\infty\right)$.
The following assertions are equivalent:
\begin{itemize}
\item $\dps \lim_{n \to \infty} x_n = x$ in the space 
$D_{\mathbb R}\left[0,\infty\right)$ endowed with the Skorohod metric.

\item For any $T > 0$, there exists a sequence of strictly increasing,
  continuous maps $(\lambda_n)_{n\geq 0}$ defined on
  $\left[0,\infty\right)$ and valued in $\left[0,\infty\right)$ such that 
\begin{equation}
\label{eq:convergence_D1} 
\lim_{n \rightarrow \infty} \sup_{0\leq t \leq T} |\lambda_n\left(t\right)-t|=0
\end{equation}
and
\begin{equation}
\label{eq:convergence_D2} 
\lim_{n\rightarrow \infty} \sup_{0\leq t \leq T} 
|x_n\left(t\right)-x\left(\lambda_n\left(t\right)\right)|=0.
\end{equation}
\end{itemize}
\end{proposition}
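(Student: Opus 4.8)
The plan is to argue directly from the explicit form of the Skorohod metric on $D_{\mathbb{R}}[0,\infty)$ recalled in~\cite[Chapter~3, Section~5]{kurtz}. That metric reads $d(x,y)=\inf_{\lambda\in\Lambda}\bigl[\gamma(\lambda)\vee\int_0^\infty e^{-u}\,d(x,y,\lambda,u)\,du\bigr]$, where $\Lambda$ is the set of strictly increasing Lipschitz bijections $\lambda$ of $[0,\infty)$ onto itself (so $\lambda(0)=0$), $\gamma(\lambda)=\sup_{0\le s<t}\bigl|\log\tfrac{\lambda(t)-\lambda(s)}{t-s}\bigr|$, and $d(x,y,\lambda,u)=\sup_{t\ge 0}\bigl(|x(t\wedge u)-y(\lambda(t)\wedge u)|\wedge 1\bigr)$. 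Replacing the metric on $\mathbb{R}$ by $|\cdot|\wedge 1$, which generates the same topology, makes $d(x,y,\lambda,u)\le 1$, so the defining integral is always finite and its tail past any level $T$ is at most $e^{-T}$. I would then establish the two implications separately.

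For the direct implication, I would extract from the infimum, for each $n$, a time-change $\lambda_n\in\Lambda$ realizing $d(x_n,x)$ up to $1/n$, so that both $\gamma(\lambda_n)\to 0$ and $\int_0^\infty e^{-u}d(x_n,x,\lambda_n,u)\,du\to 0$. Taking $s=0$ in the definition of $\gamma$ and using $\lambda_n(0)=0$ gives $|\lambda_n(t)-t|\le t\,(e^{\gamma(\lambda_n)}-1)$, which yields~\eqref{eq:convergence_D1} on every $[0,T]$. Since $u\mapsto d(x_n,x,\lambda_n,u)$ is nondecreasing, convergence of the integral forces $d(x_n,x,\lambda_n,u_0)\le e^{u_0}\int_0^\infty e^{-u}d(x_n,x,\lambda_n,u)\,du\to 0$ for each fixed $u_0$; choosing $u_0=T+1$ and noting that $\lambda_n(t)\le u_0$ for $t\le T$ and $n$ large (because $\lambda_n(t)\approx t$) bounds $\sup_{t\le T}(|x_n(t)-x(\lambda_n(t))|\wedge 1)$ by $d(x_n,x,\lambda_n,u_0)$, giving~\eqref{eq:convergence_D2}. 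This direction is routine.

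The converse is the substantial one, and here the main obstacle is to produce time-changes that are simultaneously close to the identity, have small log-slope $\gamma$, and preserve the matching of values; the difficulty is that $x$ may jump, so merely perturbing $\lambda_n$ in time can destroy~\eqref{eq:convergence_D2}. I would fix $\varepsilon>0$, pick $T$ with $e^{-T}<\varepsilon$ to control the tail, and invoke the hypothesis to obtain $\lambda_n$ satisfying~\eqref{eq:convergence_D1}--\eqref{eq:convergence_D2} on $[0,T]$. Restricting to $[0,T]$ (after an affine reparametrization absorbing the small discrepancies $\lambda_n(0)\approx 0$, $\lambda_n(T)\approx T$), these two properties say exactly that $x_n\to x$ in the Skorohod metric on the compact interval $[0,T]$. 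By Billingsley's equivalence of the ordinary and the ``strong'' Skorohod metrics on a compact interval (see~\cite{billy}), I may then replace $\lambda_n$ by increasing homeomorphisms $\mu_n$ of $[0,T]$ with $\|x_n-x\circ\mu_n\|_{\infty,[0,T]}\to 0$ and, crucially, $\gamma_{[0,T]}(\mu_n)\to 0$. This is the step that disposes of the jumps: the finitely many large jumps of $x$ pin down where $\mu_n$ must land, while uniform continuity in between provides the slack needed to control the slope.

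Finally I would extend each $\mu_n$ to an element $\tilde\mu_n\in\Lambda$ by setting $\tilde\mu_n(t)=\mu_n(T)+(t-T)$ for $t\ge T$, so that $\gamma(\tilde\mu_n)=\gamma_{[0,T]}(\mu_n)\to 0$, and plug $\tilde\mu_n$ into the global metric. Splitting $\int_0^\infty e^{-u}d(x_n,x,\tilde\mu_n,u)\,du$ at $u=T$, the tail is at most $e^{-T}<\varepsilon$. For $u\le T$ one has $d(x_n,x,\tilde\mu_n,u)\le\|x_n-x\circ\mu_n\|_{\infty,[0,T]}$ except when $u$ lies within $\|\tilde\mu_n-\mathrm{id}\|_{\infty,[0,T]}$ of a jump time of $x$, where I only use $d\le 1$; since that exceptional set has Lebesgue measure $O(\|\tilde\mu_n-\mathrm{id}\|_{\infty,[0,T]})\to 0$, the integration over $u$ smooths out the endpoint/jump effect and the $[0,T]$ part of the integral also tends to $0$. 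Hence $\limsup_n d(x_n,x)\le\varepsilon$, and letting $\varepsilon\to 0$ gives $d(x_n,x)\to 0$, completing the proof.
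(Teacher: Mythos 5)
The paper gives no proof of this proposition: it is quoted verbatim from Ethier--Kurtz~\cite{kurtz} in the appendix of classical results, so your attempt can only be judged on its own terms. Its architecture (work directly with the explicit metric of~\cite{kurtz}) is reasonable, but both implications rest on a step that is false as written. In the forward direction, the claim that $u\mapsto d(x_n,x,\lambda_n,u)$ is nondecreasing is wrong, and so is the inequality you derive from it. Counterexample: $x=1_{[c,\infty)}$, $x_n=1_{[c+1/n,\infty)}$, with $\lambda_n$ the piecewise-linear time change sending $c+1/n$ to $c$ (slope close to $1$, identity-like at infinity). Then $\gamma(\lambda_n)\to 0$ and $\int_0^\infty e^{-u}\,d(x_n,x,\lambda_n,u)\,du=O(1/n)$, yet $d(x_n,x,\lambda_n,u)=1$ for every $u\in[c,c+1/n)$: truncation at such $u$ retains the jump of one function but not the matched jump of the other. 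Taking $c=T+1=u_0$, your inequality $d(x_n,x,\lambda_n,u_0)\le e^{u_0}\int_0^\infty e^{-u}\,d(x_n,x,\lambda_n,u)\,du$ has left-hand side $1$ and right-hand side tending to $0$. The conclusion of this direction is still reachable, but by a different mechanism: by Markov's inequality the set of $u\in[T+1,T+2]$ where $d(x_n,x,\lambda_n,u)>\delta$ has Lebesgue measure at most $e^{T+2}\delta^{-1}\int_0^\infty e^{-u}\,d(x_n,x,\lambda_n,u)\,du\to 0$, so for each large $n$ you can select a good $u_n\in[T+1,T+2]$ and run your restriction argument with $u_n$ in place of a fixed $u_0$.

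In the converse, the assertion that \eqref{eq:convergence_D1}--\eqref{eq:convergence_D2} on $[0,T]$ ``say exactly'' that $x_n\to x$ in the Skorohod topology of $D[0,T]$ is also false, for the same reason. Take $x=1_{[T,\infty)}$ and $x_n=1_{[T+1/n,\infty)}$: the maps $\lambda_n(t)=\bigl(1-\tfrac{1}{nT}\bigr)t$ satisfy \eqref{eq:convergence_D1}--\eqref{eq:convergence_D2} on $[0,T]$ (both functions and $x\circ\lambda_n$ vanish there), yet $x_n|_{[0,T]}\equiv 0$ cannot converge to $x|_{[0,T]}$ in $D[0,T]$, since every homeomorphism of $[0,T]$ fixes the endpoint and $x(T)=1$. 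So the appeal to Billingsley's equivalence of the two metrics~\cite{billy} is not available at that point of your argument. The standard repair is to choose $T$ among the continuity points of $x$ (the jump times are at most countable) subject to $e^{-T}<\epsilon$; with that choice the restriction statement is true, and the remainder of your plan (Billingsley's theorem to obtain time changes with small log-slope, extension by the identity, splitting the integral at $T$) goes through, with one further refinement: in the final exceptional-set estimate you must threshold the jump sizes of $x$, since $x$ may have infinitely many jumps in $[0,T]$ and only the finitely many jumps exceeding a fixed $\eta>0$ can be excised by a set of measure $O(\|\tilde\mu_n-\mathrm{id}\|_{\infty,[0,T]})$, the small jumps being absorbed into the error. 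In short: the strategy is sound and both gaps are patchable, but as written neither implication is proved.
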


\begin{theorem}[Aldous' criterion, Theorem VI.4.5 of~\cite{jacod}]
\label{critere} 
Let $(X^n)_{n\geq 1}$ be a sequence of c\`ad-l\`ag processes, with distributions
$\P^n$. Suppose that
\begin{itemize}
 \item for any $N\in \NN$ and $\epsilon>0$, there exists
   $n_0\in\NN$, $n_0>0$, and $K\in\RR^+$ such that, for any $n\geq n_0$,
\begin{equation}
\label{3.21i}
\P^n \left( \sup_{t\leq N} \left|X^n_t\right|>K \right) \leq \epsilon.
\end{equation}
\item for any $N\in \NN$ and $\alpha>0$, we have
\begin{equation}
\label{4.4}
\lim_{\theta\rightarrow 0} \, \limsup_n \, 
\sup_{S,T\in\mathfrak T_N^n, \;S\leq T\leq S+\theta}
\P^n\left(\left|X^n_T-X^n_S\right| \geq \alpha\right)=0,
\end{equation}
where $\mathfrak T_N^n$ is the set of all $\F^n$ stopping times that are
bounded by $N$.
\end{itemize}
Then the sequence $(X^n)_{n\in\NN}$ is tight.
\end{theorem}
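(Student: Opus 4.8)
The plan is to prove that $(X^n)$ is tight by establishing relative compactness of its laws in the Skorohod space $D_{\RR}[0,\infty)$ and invoking Prohorov's theorem (see e.g.~\cite[Theorem 2.2]{kurtz}), tightness and relative compactness being equivalent there since $D_{\RR}[0,\infty)$ is complete and separable. It suffices to control the processes on each finite horizon $[0,N]$. I would rely on the standard sufficient criterion for relative compactness in $D_{\RR}[0,\infty)$, which asks for two things: a \emph{compact containment} condition, stating that for every $\eta>0$ there is a compact $\Gamma\subset\RR$ with $\inf_n \P^n(X^n_t\in\Gamma \text{ for all } t\le N)\ge 1-\eta$; and a \emph{modulus} condition, stating that $\lim_{\delta\to 0}\limsup_n \P^n\bra{w'(X^n,\delta,N)\ge\alpha}=0$ for every $\alpha>0$, where $w'(x,\delta,N)$ is the Skorohod modulus, i.e. the infimum over partitions $0=t_0<\cdots<t_r=N$ of mesh larger than $\delta$ of the largest oscillation of $x$ over the subintervals $[t_{i-1},t_i)$. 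The whole problem thus reduces to deriving these two conditions from the hypotheses~\eqref{3.21i} and~\eqref{4.4}.

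Compact containment is immediate from~\eqref{3.21i}: the state space being $\RR$, the interval $[-K,K]$ is compact, and~\eqref{3.21i} states exactly that $\P^n\bra{\sup_{t\le N}|X^n_t|>K}\le\epsilon$ for all $n\ge n_0$, the finitely many remaining laws being individually tight. All the difficulty lies in deducing the modulus condition from Aldous' stopping-time hypothesis~\eqref{4.4}.

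For the modulus condition I would use a stopping-time construction. Fix $\alpha,N>0$ and, for each $n$, define recursively $\tau_0=0$ and $\tau_{j+1}=\inf\set{t>\tau_j:|X^n_t-X^n_{\tau_j}|>\alpha}\wedge N$. These are $\F^n$-stopping times bounded by $N$, and by construction the oscillation of $X^n$ over each $[\tau_j,\tau_{j+1})$ is at most $2\alpha$; hence the random points $\set{\tau_j}\cup\set{N}$ form a candidate partition realizing $w'(X^n,\delta,N)\le 2\alpha$, provided its mesh exceeds $\delta$. Whether the mesh is large is exactly what~\eqref{4.4} controls: setting $S=\tau_j$ and $T=(\tau_j+\theta)\wedge\tau_{j+1}$, which are bounded stopping times with $S\le T\le S+\theta$, the event $\set{\tau_{j+1}-\tau_j\le\theta,\ \tau_{j+1}<N}$ is contained in $\set{|X^n_T-X^n_S|\ge\alpha}$, so by~\eqref{4.4} the probability of a given short gap tends to $0$ as $\theta\to 0$, uniformly in $n$ in the $\limsup$ sense.

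The \textbf{main obstacle} is to combine these per-gap estimates quantitatively, and this is the genuinely delicate point of Aldous' theorem. A crude union bound fails: exhausting $[0,N]$ with gaps of size $\theta$ requires of order $N/\theta$ intervals, while~\eqref{4.4} only gives a per-gap probability vanishing without a controlled rate, so $(N/\theta)\times o(1)$ need not vanish. The correct route is first to show that the number $J$ of $\alpha$-oscillations before $N$ is tight, i.e. bounded in probability uniformly in $n$ and \emph{independently of} $\delta$; this decouples the counting from the mesh scale and is obtained from~\eqref{4.4} together with~\eqref{3.21i} by a refined two-parameter stopping-time estimate bounding how many oscillations of size $\alpha$ can occur within any window of length $\theta$. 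Once $J\le J_0$ holds with probability $1-\eta$ for a deterministic $J_0$ independent of $\delta$, a union bound over $j\le J_0$ of the per-gap estimate yields, for $\delta<\theta$,
$$\P^n\bra{w'(X^n,\delta,N)\ge 2\alpha}\le\eta+J_0\,\sup_{S\le T\le S+\theta}\P^n\bra{|X^n_T-X^n_S|\ge\alpha}.$$
Letting first $\theta\to0$ (using~\eqref{4.4}) and then $\eta\to0$ gives the modulus condition. With both conditions verified, the relative-compactness criterion in $D_{\RR}[0,\infty)$ applies and $(X^n)$ is tight, which concludes the proof.
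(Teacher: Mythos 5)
First, a point of reference: the paper itself does not prove this statement. It is recalled in Appendix~\ref{sec:app} as a classical result and cited directly from \cite{jacod} (Theorem VI.4.5), so your attempt can only be measured against the standard literature proof of Aldous' criterion. Your skeleton matches that proof: compact containment on $[0,N]$ from~\eqref{3.21i}, control of the Skorohod modulus $w'$ via the oscillation stopping times $\tau_0=0$, $\tau_{j+1}=\inf\set{t>\tau_j:|X^n_t-X^n_{\tau_j}|>\alpha}\wedge N$, and the per-gap estimate obtained from~\eqref{4.4} with $S=\tau_j$, $T=(\tau_j+\theta)\wedge\tau_{j+1}$. You also correctly diagnose why a naive union bound over the $\sim N/\theta$ intervals fails.

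However, there is a genuine gap precisely at the point you yourself flag as ``the genuinely delicate point'': the uniform-in-$n$ tightness of the number $J$ of $\alpha$-oscillations before time $N$ is asserted (via an unspecified ``refined two-parameter stopping-time estimate'') but never proved, and everything downstream depends on the resulting deterministic bound $J_0$. This counting lemma is the actual heart of Aldous' theorem, and the missing argument is a pigeonhole-plus-Markov step which, incidentally, uses only~\eqref{4.4} and not~\eqref{3.21i}. Fix $\varepsilon>0$ and use~\eqref{4.4} to pick $\theta>0$ and $n_0$ so that, for $n\geq n_0$ and every pair of stopping times $S\leq T\leq S+\theta$ bounded by $N$, one has $\P^n\left(|X^n_T-X^n_S|\geq\alpha\right)\leq\varepsilon$; with your choice of $S$ and $T$ this yields $\P^n\left(\tau_{j+1}-\tau_j\leq\theta,\ \tau_{j+1}<N\right)\leq\varepsilon$ for every $j$. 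Now take $K$ with $K\theta\geq 2N$. On the event $\set{\tau_K\leq N}$ the $K$ gaps $\tau_{j+1}-\tau_j$, $0\leq j<K$, sum to at most $N$, so at least $K/2$ of them are $\leq\theta$ (otherwise the sum would exceed $(K/2)\theta\geq N$), and each such gap with $j\leq K-2$ ends at a genuine oscillation time $\tau_{j+1}<N$; hence, up to constants,
$$
1_{\set{\tau_K\leq N}}
\leq
\frac{2}{K}\sum_{j=0}^{K-1}
1_{\set{\tau_{j+1}-\tau_j\leq\theta,\ \tau_{j+1}<N}},
$$
and taking expectations gives $\P^n\left(\tau_K\leq N\right)\leq 2\varepsilon$ for all $n\geq n_0$. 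This produces your $J_0$ (namely $K$, which depends on $\alpha$, $N$, $\varepsilon$ but neither on $\delta$ nor on $n$), after which your union bound followed by the limits $\theta\to 0$ and $\eta\to 0$ does conclude. Without this step the proof is incomplete; with it inserted, your argument coincides with the standard proof of Aldous' criterion.
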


\bibliographystyle{plain}
\bibliography{fichierb} 

\end{document}